\title{A topos for continuous logic}
\author{Daniel Figueroa and Benno van den Berg} 
\date{\today}
\newtheorem{defn}{Definition}
\newcommand{\N}{\mathbb{N}}
\newcommand{\sq}{\sqsubseteq}
\newcommand{\ang}[1]{\langle #1 \rangle}
\newcommand{\brac}[1]{\llbracket #1 \rrbracket}
\newcommand{\bfp}{\mathbf{P}}
\newcommand{\bfq}{\mathbf{Q}}
\begin{document}

\begin{abstract} 
We suggest an ordering for the predicates in continuous logic so that the semantics of continuous logic can be formulated as a hyperdoctrine. We show that this hyperdoctrine can be embedded into the hyperdoctrine of subobjects of a suitable Grothendieck topos. For this embedding we use a simplification of the hyperdoctrine for continuous logic, whose category of equivalence relations is equivalent to the category of complete metric spaces and uniformly continuous maps. 
\end{abstract}

\maketitle

\section{Introduction} \label{section:intro}
\addcontentsline{toc}{section}{\nameref{section:intro}}

Continuous first-order logic is a modification of classical first-order logic introduced by Yaacov and Usvyatsov in \cite{yaacov_usvyatsov_2010} and resembles the earlier logic introduced by Chang and Keisler in \cite{chang_keisler_1966}. Generally speaking, in continuous logic the structures are complete metric spaces of finite diameter, say of diameter 1, terms are uniformly continuous functions between such metric spaces, and formulas are uniformly continuous functions into the unit interval. One of its uses is that it allows the study of elementary classes of complete metric structures that are not elementary in classical model theory. As a result continuous model theory has led to new applications in analysis. But also from the theoretical side, the model theory of continuous logic has been successful and many concepts from classical logic such as completeness and compactness can be given continuous counterparts \cite{yaacov_usvyatsov_2010}. 

In this paper we wish to analyse continuous logic from the point of view of categorical logic. We are not the first to do so; however, the earlier attempts we are aware of (such as \cite{albert2016metric} and \cite{cho2019categorical}) start by modifying the standard framework of categorical logic. In the background, Lawvere's idea to understand metric spaces as enriched categories has been very influential \cite{lawvere_1973}. In contrast, the analysis we give here does not rely on enriched category theory; instead, it uses conventional concepts from categorical logic, such as the logic of a topos and Lawvere's notion of a hyperdoctrine.

The starting point for our analysis is a new hyperdoctrine, which we call $\mathbf{CMT}$ for continuous model theory. The main aim of this paper is to embed this hyperdoctrine into the subobject hyperdoctrine of a Grothendieck topos. In doing so, we show that continuous logic is subsumed by the framework for categorical logic provided by topos theory. For this embedding we use a simplification of the hyperdoctrine $\mathbf{CMT}$, which we call $\mathbf{U}$. We also show that the category of equivalence relations over $\mathbf{U}$ is equivalent to the category of complete metric spaces and uniformly continuous maps. 

\section*{Notation}\label{section: notation}
\addcontentsline{toc}{section}{\nameref{section: notation}}

\begin{itemize}
    \item If $\mathcal{C}$ is a category and $X$ and $Y$ are objects in $\mathcal{C}$ then we write $X \times Y$ for the product of $X$ and $Y$ and $X + Y$ for the coproduct of $X$ and $Y$. If $f$ and $g$ are morphisms in $\mathcal{C}$, then we write $\ang{f,g}$ for the pairing of $f$ and $g$ and we write $[f,g]$ for the copairing of $f$ and $g$.
    \item If $f$ and $g$ are morphisms in a category with common codomain, we write $f^*(g)$ for the pullback of $g$ along $f$.
    \item If $S$ is a set, $s \in S$, and if there is a clear notion of an equivalence relation $\sim$ on $S$ we write $[s]$ for the equivalence class of $S$ by $\sim$ represented by $s$.
    \item Let $X$ and $Y$ be sets and let $u: X^n \rightarrow Y$ be a function. If $\phi_1, ... , \phi_n$ are functions $X \rightarrow Y$ then we write $u(\phi_1,...,\phi_n)$ for the function $X \rightarrow Y$ that takes $x \in X$ to  $u(\phi_1(x),...,\phi_n(x))$.
    \item By $[0,1]$ we denote the real unit interval and by $[0,1]^0$ we denote the set $\{0\}$.
    \item By ${\mathbf{Preorders}}$ we denote the category of preorders and order-preserving functions.
    By ${\mathbf{DistPrelattices}}$ we denote the category where objects are preorders whose poset reflections are distributive lattices and morphisms are order-preserving functions whose poset reflections are lattice homomorphisms. If $A$ is a preorder and $a, b \in A$, we write $a \simeq b$ and call $a$ and $b$ isomorphic if $a \leq b$ and $b \leq a$. More generally, $A$ and $B$ are objects and $f,g : A \rightarrow  B$ morphisms in any of these categories, we say that $f \leq g$ if $f(a) \leq_B g(a)$ for all $a \in A$, where $\leq_B$ is the ordering in $B$. We say that such morphisms $f$ and $g$ are isomorphic and write $f \simeq g$ if $f \leq g$ and $g \leq f$.
\end{itemize}
 
\section{Metric spaces and uniformities}

We start by recalling some basic definitions and facts about metric spaces and uniformities.

\begin{defi}{metricspace} A \emph{metric space} is a pair $(X,d)$ consisting of a set $X$ together with a function $d: X \times X \to \mathbb{R}_{\geq 0}$ satisfing the following three requirements for all $x, y, z \in X$:
    \begin{enumerate}
        \item $d(x,y) = 0$ if and only if $x = y$.
        \item $d(x,y) = d(y,x)$.
        \item $d(x, z) \leq d(x,y) + d(y,z)$.
    \end{enumerate}
The function $d$ is called the \emph{metric} and the real number $d(x,y)$ is called the \emph{distance} between $x$ and $y$. 
If requirement (i) is weakened to the statement that $d(x,x) = 0$ for all $x \in X$, we call the pair $(X,d)$ a \emph{pseudometric space} and $d$ a \emph{pseudometric}. 
\end{defi}
 
\begin{defi}{complmetricsp}
A sequence of elements $(x_n)_{n \in \mathbb{N}}$ in a (pseudo)metric space is a \emph{Cauchy sequence} if for each $\varepsilon \gt 0$ there is an element $K \in \mathbb{N}$ such that $d(x_n,x_m) \leq \varepsilon$ for all $n, m \geq K$. An element $x \in X$ is a \emph{limit} of a Cauchy sequence $(x_n)_{n \in \mathbb{N}}$ if for each $\varepsilon \gt 0$ there is a $K \in \mathbb{N}$ such that $d(x,x_n) \leq \varepsilon$ for all $n \geq K$; we will also say that the sequence $(x_n)_{n \in \N}$ converges to $x$ and write $(x_n)_{n \in \N} \to x$. A (pseudo)metric space is \emph{complete} if each Cauchy sequence has a limit.
\end{defi}

We wish to treat the (complete) (pseudo)metric spaces as objects in a category. For that we need to single out a special classes of morphisms, and the natural class for us, given the connection we wish to establish with continuous model theory, is the class of uniformly continuous functions.

\begin{defi}{uniformcontinuity}
    If $(X,d)$ and $(Y,d)$ are (pseudo)metric spaces, then a function $f: X \to Y$ is called \emph{uniformly continuous} if for each $\varepsilon \gt 0$ there is a $\delta \gt 0$ such that for all $x_0,x_1 \in X$ we have:
    \[ d(x_0,x_1) \leq \delta \Longrightarrow d(f(x_0), f(x_1)) \leq \varepsilon. \]
    We write $\mathbf{Met}$ for the category of metric spaces and uniformly continuous maps, ${\mathbf{pMet}}$ for the category of pseudometric spaces and uniformly continuous maps, and ${\mathbf{cMet}}$  for the category of complete metric spaces and uniformly continuous maps.
\end{defi}    

\begin{defi}{diameter}
    We will say that a pseudometric space $(X,d)$ \emph{has diameter at most 1} if $d(x,y) \leq 1$ for all $x, y \in X$. In this case we will regard the metric $d$ as a function $d: X \times X \to [0,1]$. We will write ${\mathbf{Met}}_1$, ${\mathbf{pMet}}_1$ and ${\mathbf{cMet}}_1$ for the subcategories consisting of the (ordinary, pseudo- or complete) metric spaces having diameter at most 1.
\end{defi}

\begin{rema}{distance1norestriction} 
Note that the inclusion functor $I: {\mathbf{Met}}_1 \to {\mathbf{Met}}$ is an equivalence: indeed, if $(X,d)$ is a metric space and we define $d': X \times X \to [0,1]$ as
\[ d'(x,y) = {\rm min}(d(x,y), 1), \]
then $(X,d)$ and $(X,d')$ are isomorphic in ${\mathbf{Met}}_1$. So from a categorical point of view (and given our choice of morphisms in ${\mathbf{Met}}$), the restriction to the spaces having diameter at most 1 is not really a restriction. Of course, similar remarks apply to ${\mathbf{pMet}}$ vs.~${\mathbf{pMet}}_1$ and ${\mathbf{cMet}}$ vs.~${\mathbf{cMet}}_1$. 
\end{rema}

For the following elementary fact we were unable to find a reference, so we include a proof here.

\begin{prop}{pMet,Met has finite (co)limits}The categories $\mathbf{pMet}_1$, $\mathbf{Met}_1$ and $\mathbf{cMet}_1$ have finite limits.
    \end{prop}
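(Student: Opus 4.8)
The plan is to rely on the standard fact that a category has all finite limits as soon as it possesses a terminal object, binary products, and equalizers, and to construct each of these explicitly in all three categories, verifying in each case that uniform continuity is preserved — and, in the case of $\mathbf{cMet}_1$, that completeness is preserved as well. Since the three constructions are built from the same underlying sets and metrics, I would carry them out once and then note which extra checks the completeness requires.

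First I would take the terminal object to be the one-point space $(\{*\}, d)$ with $d(*,*) = 0$; every function into it is uniformly continuous and it is trivially complete, so it is terminal in all three categories. Next, for the binary product of $(X,d_X)$ and $(Y,d_Y)$ I would equip $X \times Y$ with the supremum metric
\[ d\big((x,y),(x',y')\big) = \max\big(d_X(x,x'), d_Y(y,y')\big). \]
This is again a (pseudo)metric, and it has diameter at most $1$ whenever the factors do; using the maximum rather than, say, a sum is precisely what keeps the diameter bounded by $1$, so that the construction stays inside the diameter-at-most-$1$ subcategories. The two projections are $1$-Lipschitz, hence uniformly continuous, and the pairing $\langle f,g\rangle$ of two uniformly continuous maps is uniformly continuous: given $\varepsilon > 0$, choose $\delta_1,\delta_2$ witnessing uniform continuity of $f$ and $g$ and take $\delta = \min(\delta_1,\delta_2)$. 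Uniqueness of the mediating map holds already at the level of underlying functions.

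For the equalizer of a parallel pair $f,g \colon X \to Y$ I would take the subset $E = \{x \in X : f(x) = g(x)\}$ with the restricted metric. The inclusion is $1$-Lipschitz, and any uniformly continuous $h$ with $f\circ h = g \circ h$ corestricts to a uniformly continuous map into $E$, since the metric on $E$ is inherited from $X$. In $\mathbf{Met}_1$ and $\mathbf{pMet}_1$ this already finishes the argument, as a subspace of a (pseudo)metric space is again one of the same kind and the supremum metric of two genuine metrics is again a genuine metric.

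Finally, for $\mathbf{cMet}_1$ I would check that both constructions stay within complete spaces. The supremum-metric product of two complete spaces is complete, because a sequence in $X \times Y$ is Cauchy (respectively convergent) exactly when both of its coordinate sequences are. For the equalizer the one step that genuinely needs care — and the main obstacle — is to see that $E$ is complete; the point is that $E$ is a \emph{closed} subspace of $X$, since $f$ and $g$ are continuous and $Y$ is Hausdorff, so their agreement set is closed, and a closed subspace of a complete metric space is complete. Having produced a terminal object, binary products, and equalizers in each of the three categories, I would then invoke the standard characterisation of finite-limit completeness to conclude.
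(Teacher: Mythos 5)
Your proof is correct. The only real difference from the paper's argument is the choice of decomposition: the paper exhibits a terminal object and pullbacks, constructing the pullback of $f$ and $g$ directly as the set-theoretic pullback $X \times_Z Y$ equipped with the max metric, whereas you split finite-limit completeness into binary products (max metric) and equalizers (the agreement set with the restricted metric). The mathematical content is essentially the same: your two completeness checks --- coordinatewise Cauchyness for the product, and closedness of $\{x : f(x)=g(x)\}$ in $X$ because the codomain is Hausdorff for the equalizer --- together amount to the paper's single verification that the limit $(x,y)$ of a Cauchy sequence in $X \times_Z Y$ still satisfies $f(x)=g(y)$ (which likewise uses that $d(f(x),g(y))=0$ forces $f(x)=g(y)$, i.e.\ that $Z$ is a genuine metric space). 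Your route has the minor advantage of isolating exactly where that separation property is needed, namely only in the equalizer step; the paper's route is marginally shorter since it handles one construction instead of two. Both are complete proofs of the proposition.
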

    
    \begin{proof}
    Clearly, the terminal object in $\mathbf{pMet}_1$ as well as in $\mathbf{Met}_1$ and in $\mathbf{cMet}_1$ is the complete metric space $(X,d)$ where $X$ is a set with exactly one element and $d$ is the unique (pseudo)metric on $X$.
    
    Furthermore, the categories $\mathbf{pMet}_1$ and ${\mathbf{Met}}_1$ both have pullbacks: let $f: (X,d) \rightarrow (Z,d)$ and $g: (Y,d) \rightarrow (Z,d)$ be morphisms in $\mathbf{pMet}_1$ or in $\mathbf{Met}_1$. We have a pullback diagram:
    \begin{equation*}
    \begin{tikzcd}
    {(X \times_Z Y,d)} \arrow[d, "\pi_X"'] \arrow[r, "\pi_Y"] & {(Y,d)} \arrow[d, "g"] \\
    {(X,d)} \arrow[r, "f"']                                   & {(Z,d)}               
    \end{tikzcd}
    \end{equation*}
    Here, $(X \times_Z Y)$ is the usual pullback in ${\mathbf{Sets}}$ of $f$ and $g$. The morphisms $\pi_X$ and $\pi_Y$ are the projections in ${\mathbf{Sets}}$ and $d$ is the (pseudo)metric defined by $d((x,y),(x',y')) := \max (d_X(x,x'),d_Y(y,y'))$. That this gives a pullback is easy to check.
    
    Finally, in the case that $f$ and $g $ are morphisms in $\mathbf{cMet}_1$ we check that $((X \times_Z Y),d)$ is a complete metric space: suppose that $(x_n,y_n)_{n \in \N}$ is a Cauchy sequence in $X \times_Z Y$. Clearly $(x_n)_{n \in \mathbb{N}}$ and $(y_n)_{n \in \mathbb{N}}$ are Cauchy sequences. Denote their limits by $x$ and $y$, respectively. Let $\varepsilon \gt 0$; we wish to show that $d(f(x), g((y)) \leq \varepsilon$. Since $(x_n)_{n \in \N} \rightarrow x$ and $(y_n)_{n \in \N} \rightarrow y$ and $g$ and $f$ are uniformly continuous, there exists a $K \in \N$ such that for all $n \geq K$ it holds that $d(g(y_n),g(y)) \leq \frac{\varepsilon}{2}$ and $d(f(x_n),f(x)) \leq \frac{\varepsilon}{2}$. So if $n \geq K$ then by the triangle inequality
    \begin{align*}
        d(f(x),g(y)) &\leq d(f(x),f(x_n)) + d(f(x_n),g(y_n)) + d(g_n(y),g(y)) \\
        &\leq \varepsilon. 
    \end{align*}
    Since this holds for every $\varepsilon \gt 0$, we conclude that $d(f(x), g(y))= 0$ and $f(x) = g(y)$. Therefore $(x, y) \in X \times_Z Y$ and $(x_n,y_n)$ converges to $(x,y)$.
\end{proof}

In the sequel we will also use the following facts and definitions:

\begin{lemm}{cho2019categorical} {\rm [Lemma 3 in \cite{cho2019categorical}]}
    In $\mathbf{pMet}_1$ the monomorphisms are precisely the morphisms which are injective as functions.
    \end{lemm}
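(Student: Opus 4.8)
The plan is to prove the two inclusions separately, since the statement asserts an equality of two classes of morphisms: those that are monic and those that are injective as functions.

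For the easy direction I would show that any $f \colon (X,d) \to (Y,d)$ which is injective as a function is a monomorphism. This follows formally from the fact that the forgetful functor $U \colon \mathbf{pMet}_1 \to \mathbf{Sets}$ is faithful. Indeed, if $f \circ g = f \circ h$ for a pair of morphisms $g, h \colon (Z,d) \to (X,d)$, then applying $U$ gives $U(f) \circ U(g) = U(f) \circ U(h)$ in $\mathbf{Sets}$; since $U(f) = f$ is injective, it is monic in $\mathbf{Sets}$, so $U(g) = U(h)$, and faithfulness of $U$ then yields $g = h$. Hence $f$ is monic.

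For the converse, suppose $f \colon (X,d) \to (Y,d)$ is a monomorphism and suppose $x_0, x_1 \in X$ satisfy $f(x_0) = f(x_1)$; I want to conclude that $x_0 = x_1$. The key observation is that the one-point pseudometric space $\mathbf{1} = (\{*\}, d)$ is an object of $\mathbf{pMet}_1$ (it has diameter $0 \le 1$) and that every function out of it is uniformly continuous, so that the elements of $X$ correspond bijectively to the morphisms $\mathbf{1} \to (X,d)$. Concretely, define $g, h \colon \mathbf{1} \to (X,d)$ by $g(*) = x_0$ and $h(*) = x_1$; both are morphisms in $\mathbf{pMet}_1$. Then $(f \circ g)(*) = f(x_0) = f(x_1) = (f \circ h)(*)$, so $f \circ g = f \circ h$, and monicity of $f$ forces $g = h$, i.e.\ $x_0 = x_1$. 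Hence $f$ is injective.

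There is no serious obstacle here; the only point requiring care is the identification of the set-theoretic elements of an object with the morphisms out of the one-point space, which hinges on the trivial but essential fact that uniform continuity imposes no constraint on a map whose domain is a single point. I would note in passing that the same argument works verbatim in $\mathbf{Met}_1$ and $\mathbf{cMet}_1$, since the one-point space lies in all three categories.
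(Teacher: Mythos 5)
Your proof is correct, and it is the standard global-points argument one would expect here; note that the paper itself offers no proof of this lemma, only a citation to Lemma 3 of \cite{cho2019categorical}. Both directions are sound: faithfulness of the forgetful functor handles one inclusion, and the observation that the one-point pseudometric space lies in $\mathbf{pMet}_1$ and represents the underlying-set functor handles the other.
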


\begin{prop}{inf sup continuous} {\rm [Proposition 2.7 in \cite{yaacov_usvyatsov_2010}]}
    Suppose $M$, $M'$ are metric spaces and $f$ is a bounded uniformly continuous function from $M \times M'$ to $\mathbb{R}$. Then $\sup_{y} f$ and $\inf_y f$ are bounded uniformly continuous functions from $M$ to $\mathbb{R}$.
\end{prop}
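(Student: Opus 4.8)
The plan is to prove the two claims---boundedness and uniform continuity---separately for $\sup_y f$, and then to obtain the statement for $\inf_y f$ for free via the identity $\inf_y f = -\sup_y(-f)$, since negation preserves both boundedness and uniform continuity. Throughout, $\sup_y f$ denotes the function $M \to \mathbb{R}$ sending $x$ to $\sup_{y \in M'} f(x,y)$.

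First I would dispose of boundedness. Since $f$ is bounded, fix $C \geq 0$ with $|f(x,y)| \leq C$ for all $(x,y) \in M \times M'$. Then for each fixed $x$ the set $\{ f(x,y) : y \in M' \}$ is a nonempty subset of $[-C,C]$, so its supremum exists and lies in $[-C,C]$. Hence $\sup_y f$ is well defined and bounded by the same constant $C$.

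The core of the argument is uniform continuity. Let $\varepsilon > 0$. By uniform continuity of $f$ on the product $M \times M'$ there is a $\delta > 0$ such that $|f(x_0,y_0) - f(x_1,y_1)| \leq \varepsilon$ whenever the two points are at distance at most $\delta$; recall that on the product we use the metric $\max(d_M, d_{M'})$, though any uniformly equivalent product metric would serve equally well. The key observation is that this same $\delta$ already works for $\sup_y f$, because we may vary the first coordinate while holding the second fixed. Concretely, suppose $d_M(x_0,x_1) \leq \delta$. For every $y \in M'$ the points $(x_0,y)$ and $(x_1,y)$ are at distance $\max(d_M(x_0,x_1),0) = d_M(x_0,x_1) \leq \delta$, so $f(x_0,y) \leq f(x_1,y) + \varepsilon \leq \sup_{y'} f(x_1,y') + \varepsilon$. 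Taking the supremum over $y$ gives $\sup_y f(x_0,y) \leq \sup_y f(x_1,y) + \varepsilon$, and by symmetry the reverse inequality holds as well, so $|\sup_y f(x_0,y) - \sup_y f(x_1,y)| \leq \varepsilon$. This is exactly uniform continuity of $\sup_y f$ with modulus $\delta$.

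I do not expect a genuine obstacle here; the only point that needs care is securing uniform rather than merely pointwise control, and this is precisely what the argument above achieves by selecting a single $\delta$ valid for all fixed $y$ simultaneously---a step that relies on $f$ being uniformly continuous on the product, not merely continuous. Finally, applying the result to $-f$, which is again bounded and uniformly continuous, and using $\inf_y f = -\sup_y(-f)$ yields the corresponding statement for $\inf_y f$.
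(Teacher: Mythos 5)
Your proof is correct; the paper itself gives no proof of this proposition, merely citing it from Yaacov--Usvyatsov, and your argument (sup-shifting with a single $\delta$ uniform in the second coordinate, then reducing $\inf_y f$ to $\sup_y(-f)$) is the standard one found in that reference. No gaps.
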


\begin{theo}{Extreme value theorem} {\rm [Extreme value theorem (Theorem 4.16 in \cite{rudin_analysis})] }
Let $f: (X,d) \rightarrow \mathbb{R}$ be a continuous function from a metric space $(X,d)$. If $K \subseteq X$ is compact, then $f(K)$ is compact and there exist $p, q \in K$ such that $f(p) = \inf f(K)$ and $f(q) =  \sup f(K)$.
\end{theo}

\begin{defi}{Uniform spaces}
    A \textit{uniformity} for a set $X$ is a non-empty family $\mathcal{U}$ of subsets of $X \times X$ such that
    \begin{enumerate}
        \item each member of $\mathcal{U}$ contains the diagonal $\Delta$;
        \item if $U \in \mathcal{U}$, then $U^{-1} \in \mathcal{U}$;
        \item if $U \in \mathcal{U}$, then $V \circ V \subseteq U$ for some $V \in \mathcal{U}$;
        \item if $U$ and $V$ are members of $\mathcal{U}$, then $U \cap V \in \mathcal{U}$; \item if $U \in \mathcal{U}$ and $U \subseteq V \subseteq X \times X$, then $V \in \mathcal{U}$.
    \end{enumerate}
    Here $U^{-1} = \{ (y,x) \, : \, (x,y) \in U \}$ and $W \circ V = \{ (x,z) \, : \, (\exists y \in Y) \, (x,y) \in V \mbox{ and } (y,z) \in W \}$.
    The pair $(X,\mathcal{U})$ is a \textit{uniform space}.
\end{defi}

\begin{defi}{basisforuniformspace}
If $\mathcal{U}$ is a uniformity on a set $X$, then a collection $\mathcal{V} \subseteq \mathcal{U}$ is called a \emph{basis} for the uniformity $\mathcal{U}$ if for each $U \in \mathcal{U}$ there exists a $V \in \mathcal{V}$ with $V \subseteq U$.
\end{defi}

If $(X,d)$ is a metric space, write $V_\varepsilon = \{ (x, y) \in X \, : \, d(x,y) \leq \varepsilon \}$. Then 
\[ \mathcal{U} = \{ U \subseteq X \times X \, : \, (\exists \varepsilon \gt 0) V_\varepsilon \subseteq U \} \]
defines a uniformity on $X$. Such a uniformity has a countable basis given by $\{ V_q \, : \, q \in \mathbb{Q} \cap (0,1] \}$. It turns out that every uniformity with a countable basis can be obtained in this way.

\begin{theo}{Uniformity induced by metric}
A uniformity $\mathcal{U}$ on a set $X$ is induced by a metric on the set $X$ if and only if it has a countable basis.
\label{uniformity induced by metric}
\end{theo}

\begin{proof}
See Theorem 8.1.21 in \cite{Engelking}
\end{proof}

\section{Hyperdoctrines and their logic}

The purpose of the present section is to introduce the terminological and notational conventions that we will use when talking about hyperdoctrines and their internal logic. Those readers who need a proper introduction to the subject we refer to \cite{hyland_johnstone_pitts_1980}.

\subsection{Hyperdoctrines} Before we can define hyperdoctrines, we have to define indexed preorders.

\begin{defi}{indexed preorder}
Let $\mathcal{C}$ be a category with finite products. An \textit{indexed preorder} is a pseudofunctor $\mathbf{P}: \mathcal{C}^{op} \rightarrow {\mathbf{Preorders}}$. We call the category $\mathcal{C}$ the \textit{base} of $\mathbf{P}$. If $X$ is an object in $\mathcal{C}$ then we call elements of $\bfp(X)$ the \textit{predicates} on $X$.
\end{defi}

\begin{defi}{adjoints in a doctrine} Let $\mathbf{P}: \mathcal{C}^{op} \rightarrow {\mathbf{Preorders}}$ be an indexed preorder. We say $\mathbf{P}$ has \textit{left adjoints} if, for any projection $\pi: X \times Y \rightarrow X$ in $\mathcal{C}$, the morphism of preorders $\mathbf{P}(\pi)$ has a left adjoint $\exists_\pi^{\mathbf{P}}$ and we say $\mathbf{P}$ has \textit{right adjoints} if $\mathbf{P}(\pi)$ has a right adjoint $\forall_\pi^{\mathbf{P}}$.
Left (or right) adjoints satisfy the \textit{Beck-Chevalley condition} if
for any pullback diagram in $\mathcal{C}$
\begin{equation*}
\begin{tikzcd}
A \arrow[r, "\pi"] \arrow[d, "f"] & B \arrow[d, "g"] \\
C \arrow[r, "\pi'"]               & D   
\end{tikzcd}
\end{equation*}
where $\pi$ and $\pi'$ are projections, it holds that $\exists_{\pi}^\bfp \circ \mathbf{P}(f) \simeq \bfp(g) \circ \exists_{\pi'}^\bfp$ (or $\forall_{\pi}^\bfp \circ \mathbf{P}(f) \simeq \bfp(g) \circ \forall_{\pi'}^\bfp$, respectively). 
\end{defi}

\begin{rema}{onnotationofadj}
If $\mathbf{P}: {C}^{op} \rightarrow {\mathbf{Preorders}}$ is an indexed preorder, and $f$ is a morphism in $\mathcal{C}$, we will often write $f^*$ instead of $\mathbf{P}(f)$ and similarly $\exists_f$ instead of $\exists_{f}^{\mathbf{P}}$ and $\forall_f$ instead of $\forall_f^{\mathbf{P}}$.
\end{rema}

\begin{defi}{frobenius}
We say that an indexed preorder $\bfp$ satisfies the \textit{Frobenius condition}{{{}}} if  $\bfp$ has left adjoints and for any projection $\pi: B \rightarrow A$ and any $\alpha \in \bfp(A)$ and $\beta \in \bfp(B)$ it holds that \[ \exists_\pi(\bfp(\pi)(\alpha) \wedge \beta) \simeq \alpha \wedge \exists_\pi(\beta). \]  
\end{defi}

\begin{defi}{equality in a doctrine}
We say that an indexed preorder $\bfp$ \textit{has equality} if the following two conditions hold:
\begin{enumerate}
    \item For any diagonal $\delta: X \rightarrow X \times X$ in $\mathcal{C}$, there exists an element $Eq_X$ (the \textit{equality predicate}) such that for any $A \in \bfp(X \times X)$ the following holds (assuming that $\bfp(X)$ has a top element $\top_X$): $$ \top_X \leq \bfp(\delta)(A) \iff Eq_X \leq A$$ 
    \item If $\pi_{13}: A \times B \times A \times B \rightarrow A \times A$ and $\pi_{24}: A \times B \times A \times B \rightarrow B \times B$ are the obvious projections in $\mathcal{C}$, then 
    $$\bfp(\pi_{13})(Eq_{A}) \wedge \bfp(\pi_{24})(Eq_{B}) \simeq Eq_{A \times B}$$
\end{enumerate}
\end{defi}

\begin{defi}{Hyperdoctrines}
    Let $\mathcal{C}$ be a category with finite products.
    \begin{itemize}
        \item A \textit{coherent hyperdoctrine} is an indexed preorder $\mathcal{C}^{op} \rightarrow {\mathbf{DistPrelattices}}$  with equality, that satisfies the Frobenius condition, and in which the left adjoints satisfy the Beck-Chevalley condition.
        \item A \textit{universally coherent hyperdoctrine} is a coherent hyperdoctrine that has right adjoints that satisfy the Beck-Chevalley condition.
    \end{itemize}
\end{defi}

\begin{defi}{morphisms of hyperdoctrines}
        Let $\mathbf{P}$ with base $\mathcal{C}$ and $\mathbf{P}'$ with base $\mathcal{D}$ be (universally) coherent hyperdoctrines. A \textit{morphism of (universally) coherent hyperdoctrines} $\mathbf{P} \rightarrow \mathbf{P}'$ is a pair $(F,\eta)$ where $F: \mathcal{C} \rightarrow \mathcal{D}$ is a functor preserving finite products and $\eta: \bfp \rightarrow \bfp' \circ F^{op}$ is a pseudonatural transformation satisfying the following properties:
        \begin{enumerate}
            \item for each projection $\pi: Y \rightarrow X$ in $\mathcal{C}$ it holds that $\eta_X \circ \exists^\bfp_\pi \simeq \exists_\pi^{\bfp' \circ F^{op}} \circ \eta_Y$ (and $\eta_X \circ \forall^\bfp_\pi \simeq \forall_\pi^{\bfp' \circ F^{op}} \circ \eta_Y$ in the case of universally coherent hyperdoctrines)
            \item for every object $X$ in $\mathcal{C}$ it holds that $\eta_{X \times X} (Eq_{X}) \simeq \bfp'(\ang{F{\pi_1},F\pi_2})(Eq_{FX})$, where $\pi_1, \pi_2$ are the projections $X \times X \rightarrow X$.
        \end{enumerate}
If $(\id_{\mathcal{C}},\eta)$ is a morphism between hyperdoctrines we also denote it by $\eta$.
    \end{defi}

\begin{defi}{embedding}
Let $(F,\eta): \mathbf{P} \rightarrow \mathbf{Q}$ be a morphism of hyperdoctrines{{{}}} where $\mathbf{P}$ has base $\mathcal{C}$.  We say $(F,\eta)$ is \textit{full}, or \textit{an embedding}, if $F$ is full and faithful and for each object $X$ in $\mathcal{C}$ and for each pair $A, B \in \mathbf{P}(X)$ we have 
$$\eta_X(A) \leq \eta_X(B) \text{ if and only if } A \leq B$$
\end{defi}

The following ``change of base'' property of hyperdoctrines will be used later. The straightforward proof is omitted.

\begin{prop}{change of base: qhd}
Let $\bfp: \mathcal{C}^{op} \rightarrow {\mathbf{DistPrelattices}}$ be a coherent hyperdoctrine and let $\mathcal{D}$ be a category with finite products. Let $F: \mathcal{D} \rightarrow \mathcal{C}$ be a functor that preserves finite products. Then $\bfp \circ F^{op}$ is a coherent hyperdoctrine and $(F, \id)$ a morphism of coherent hyperdoctrines. Furthermore, if $\mathbf{P}$ is universally coherent, then $P \circ F^{op}$ is universally coherent and $(F,\id)$ is a morphism of universally coherent hyperdoctrines.
\end{prop}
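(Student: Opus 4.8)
The plan is to verify the clauses in the definition of a (universally) coherent hyperdoctrine directly for the composite $\bfp \circ F^{op} \colon \mathcal{D}^{op} \to {\mathbf{DistPrelattices}}$, reducing each one to the corresponding property of $\bfp$. First note that $\bfp \circ F^{op}$ is a composite of pseudofunctors and hence an indexed preorder, and since $\bfp$ already lands in ${\mathbf{DistPrelattices}}$, so does the composite. The single fact that does all the work is that $F$ preserves finite products. Concretely, for objects $X, Y$ of $\mathcal{D}$ the comparison map $c := \ang{F\pi_1, F\pi_2} \colon F(X \times Y) \to FX \times FY$ is an isomorphism in $\mathcal{C}$ that intertwines the structural maps: a projection $\pi \colon X \times Y \to X$ satisfies $F\pi = \pi_{FX} \circ c$ with $\pi_{FX}$ a genuine projection, a diagonal $\delta \colon X \to X \times X$ satisfies $c \circ F\delta = \delta_{FX}$, and the pullback squares of the Beck--Chevalley condition, being built from products, are carried by $F$ to pullback squares of the same shape up to these comparison isomorphisms.

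Next I would use this to transport the logical structure. Applying $\bfp$ to $F\pi = \pi_{FX} \circ c$ gives $\bfp(F\pi) \cong \bfp(c) \circ \bfp(\pi_{FX})$. Here $\bfp(c)$ is an isomorphism of preorders (since $c$ is an isomorphism and $\bfp$ a pseudofunctor), so it has both adjoints, its inverse being $\bfp(c^{-1})$ up to isomorphism; and $\bfp(\pi_{FX})$ has a left adjoint because $\bfp$ is coherent. Hence $\bfp(F\pi)$ has a left adjoint, and I define $\exists_\pi^{\bfp \circ F^{op}} := \exists_{\pi_{FX}}^{\bfp} \circ \bfp(c^{-1})$. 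In the same spirit I define the equality predicate $Eq_X^{\bfp \circ F^{op}} := \bfp(\ang{F\pi_1, F\pi_2})(Eq_{FX}^{\bfp})$, pulling the equality predicate of $\bfp$ on $FX \times FX$ back along $c$. In the universally coherent case the right adjoint $\forall_\pi^{\bfp \circ F^{op}}$ is defined identically, using that $\bfp(c)$ also has a right adjoint.

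Then I would check the remaining axioms, each of which reduces to the corresponding statement for $\bfp$ after transporting along $c$ and using pseudofunctoriality. For Beck--Chevalley, I apply $F$ to a qualifying pullback square in $\mathcal{D}$ to obtain, up to the comparison isomorphisms, a qualifying pullback square in $\mathcal{C}$, and then invoke Beck--Chevalley for $\bfp$; the Frobenius identity is handled the same way. For the equality axioms the key computation is $c \circ F\delta = \delta_{FX}$, which yields $\bfp(F\delta)(A) \cong \bfp(\delta_{FX})(\bfp(c^{-1})(A))$; since $\bfp(c)$ is an order-isomorphism, in particular order-reflecting, the biconditional $\top \leq \bfp(F\delta)(A) \iff Eq_X^{\bfp \circ F^{op}} \leq A$ follows from the equality axiom of $\bfp$, and the compatibility of $Eq$ with binary products transports likewise using that $F$ preserves products.

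Finally, for the morphism claim the pair in question is $(F, \id) \colon \bfp \circ F^{op} \to \bfp$, where $F \colon \mathcal{D} \to \mathcal{C}$ preserves finite products by hypothesis and $\id$ is the identity pseudonatural transformation on $\bfp \circ F^{op}$. Conditions (1) and (2) of the definition of a morphism then hold essentially by construction: the adjoints $\exists_\pi^{\bfp \circ F^{op}}$ and $\forall_\pi^{\bfp \circ F^{op}}$ are exactly those computed in $\bfp \circ F^{op}$, and $Eq_X^{\bfp \circ F^{op}}$ was defined to be $\bfp(\ang{F\pi_1, F\pi_2})(Eq_{FX}^{\bfp})$, which is precisely the right-hand side of condition (2). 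I expect no conceptual obstacle; the only real work, and the point deserving care, is the systematic bookkeeping of the two layers of coherence isomorphisms, those of the pseudofunctor $\bfp$ and the product-comparison isomorphisms of $F$, and the check that they compose consistently, which is exactly why the routine proof can be omitted.
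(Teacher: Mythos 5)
Your proposal is correct: the paper explicitly omits this proof as straightforward, and what you have written is precisely the standard verification it has in mind, transporting all structure along the product-comparison isomorphisms $c = \ang{F\pi_1,F\pi_2}$ and using that $\bfp(c)$ is an order-isomorphism with both adjoints given by $\bfp(c^{-1})$. The one point genuinely deserving the care you flag is that the paper states Beck--Chevalley and the adjoint conditions only for literal projections, whereas $F\pi$ is a projection only up to composition with $c$; your reduction $\bfp(F\pi) \cong \bfp(c) \circ \bfp(\pi_{FX})$ handles exactly this, so there is no gap.
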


\subsection{Interpretation of first-order logic in a hyperdoctrine} \label{Subsection: internal language of a hyperdoctrine}

The logical symbols we will consider are those of coherent logic with equality ($=$, $\top$, $\bot$, $\vee$, $\wedge$, $\exists$) and universal quantification ($\forall$). 

We work in a multi-sorted logic. The class of \textit{coherent formulas} is built inductively using the symbols of coherent logic with equality ($=$, $\top$, $\bot$, $\vee$, $\wedge$, $\exists$). The class of \textit{universally coherent formulas} is built similarly, but with the addition of the universal quantification symbol ($\forall$).

A \textit{context} $\bar x$ is a list of sorted variables $x_1^{S_1},...,x_n^{S_n}$. If $t$ is a term whose free variables are contained in $\bar x$, then we write $(t | \bar x)$ to denote the term $t$ \textit{in context} $\bar x$.  Likewise, for a formula $\phi$ whose free variables are contained in context $\bar x$ we denote the formula $\phi$ \textit{in context} $\bar x$ by $(\phi | \bar x)$. 

A \textit{sequent} over a signature $\Sigma$ is an expression of the form $\phi \vdash_{\bar x} \psi$ or an expression of the form $\phi \dashv \hspace{1pt} \vdash_{\bar x} \psi$ where $\bar x$ is a context and $\phi$ and $\psi$ are formulas over $\Sigma$ whose free variables are contained in $\bar x$. We say a sequent $\phi \vdash_{\bar x} \psi$ is (universally) coherent if $\psi$ and $\phi$ are (universally) coherent formulas. 

\begin{defn}\label{defn: interpretation of FOL in HD}
For an indexed preorder $\mathbf{P}: \mathcal{C}^{op} \rightarrow {\mathbf{Preorders}}$ and a signature $\Sigma$, an interpretation $\brac{\cdot}$ of $\Sigma$ in $\mathbf{P}$ is an assignment as follows:
\begin{enumerate}
    \item for each sort $S$ in $\Sigma$ an object $\brac{S}$ in $\mathcal{C}$;
    \item for each function symbol $f: S_1 \times ... \times S_n \rightarrow S$ in $\Sigma$ a morphism $\brac{f}: \brac{S_1} \times ... \times \brac{S_n} \rightarrow \brac{S}$ in $\mathcal{C}$;
    \item for each relation symbol $R \subseteq (S_1, S_2, ... , S_n)$ in $\Sigma$ an element $\brac{R}$ of $\mathbf{P}(\brac{S_1} \times ... \times \brac{S_n})$.
\end{enumerate}
\end{defn}

If $\bar x = x_1^{S_1}, ..., x_n^{S_n}$ is a context, then we use the notation $\mathbf{s}(\bar x) :=  \brac{S_1} \times ... \times \brac{S_n}$ and if $t$ is a term of sort $S$ we say $\mathbf{s}(t) = \brac{S}$. 
If $\bfp$ is a coherent hyperdoctrine (resp. a universally coherent hyperdoctrine), the interpretation of terms-in-context and formulas-in-context goes according to clauses \ref{hdclause basic}-\ref{hdclause coh} (respectively, according to clauses \ref{hdclause basic}-\ref{hdclause quant})
\begin{multicols}{2}
\begin{enumerate}[label=(\roman*)]
    \item $\brac{x_i^{S_i}}_{\bar x} = \pi_i$ \label{hdclause basic}
    \item $\brac{f(t_1,...,t_n)}_{\bar x} = \\ \brac{f} \circ \ang{\brac{t_1}_{\bar x}, ... ,\brac{t_n}_{\bar x}}$ 
    \item $\brac{R(t_1,...,t_n)}_{\bar x} = \\ \ang{\brac{t_1}_{\bar x}, ... , \brac{t_n}_{\bar x}}^*(\brac{R})$
    \item $\brac{s={}t} = \ang{\brac{s}_{\bar{x}}, \brac{t}_{\bar x}}^* (Eq_{\mathbf{s}(\bar x)})$
    \item $\brac{\top}_{\bar x} = \top$
    \item $\brac{\bot}_{\bar x} = \bot$
    \item $\brac{\phi \wedge \psi}_{\bar x} = \brac{\phi}_{\bar x} \wedge \brac{\psi}_{\bar x}$
    \item $\brac{\phi \vee \psi}_{\bar x} = \brac{\phi}_{\bar x} \vee \brac{\psi}_{\bar x}$
    \item $\brac{ \exists y( \phi)}_{\bar x} = \exists_{\pi}(\brac{\phi}_{\bar x y})$ \label{hdclause coh}
    \item $\brac{ \forall y( \phi)}_{\bar x} = \forall_{\pi}(\brac{\phi}_{\bar x y})$ \label{hdclause quant}
\end{enumerate}
\end{multicols}

where $\pi_i: \mathbf{s}(\bar x) \rightarrow \brac{S_i}$ and $\pi: \mathbf{s}(\bar x) \times \mathbf{s}(y) \rightarrow \mathbf{s}(\bar x)$ are projections.

Given an interpretation $\brac{\cdot}$ of a signature $\Sigma$ in a hyperdoctrine $\mathbf{P}$ and given a sequent $\phi \vdash_{\bar x} \psi$ over $\Sigma$, we say that the sequent \textit{holds} or is \textit{valid} with respect to $\brac{\cdot}$ if $ \brac{\phi}_{\bar x} \leq \brac{\psi}_{\bar x}$ in $\mathbf{P}(\mathbf{s}(\bar x))$. We say that $\phi \dashv \hspace{1pt} \vdash_{\bar x} \psi$ holds if $ \brac{\phi}_{\bar x} \leq \brac{\psi}_{\bar x}$ and $ \brac{\phi}_{\bar x} \geq \brac{\psi}_{\bar x}$. When a context $\bar{x}$ for such sequents is implicitly clear, we may simply write that $\phi \vdash \psi$ holds or that $\phi {\ \dashv \hspace{1pt} \vdash \ } \psi$ holds. We say that a formula-in-context $(\phi|\bar x)$ is \textit{true} if the sequent $\top \vdash_{\bar x} \phi$ holds.

\begin{definition}
    Suppose $\bfp: \mathcal{C}^{op} \rightarrow {\mathbf{Preorders}}$ and $\bfq: \mathcal{D} \rightarrow {\mathbf{Preorders}}$ are indexed preorders and that $F: \mathcal{C} \rightarrow \mathcal{D}$ is a functor that preserves finite products. Let $\eta: \bfp \rightarrow \bfq$ be a pseudonatural transformation. Let $\Sigma$ be a signature and let $\brac{\cdot}$ be an interpretation of $\Sigma$ in $\bfp$. We define the interpretation $\brac{\cdot}'$ of $\Sigma$ in $\bfq$ as follows:
    
    \begin{enumerate}
        \item $\brac{S}' = F( \brac{S})$ for all sorts $S \in \Sigma$
        \item $\brac{f}' = F(\brac{f}) \circ \ang{F\pi_1,...,F\pi_n}^{-1}$ for all functions symbols $f: S_1 \times  ... \times S_n \rightarrow S \in \Sigma$
        \item $\brac{R}' = \bfq(\ang{F\pi_1,...,F\pi_n})(\eta_{(\brac{S_1} \times ... \times \brac{S_n})}( \brac{R} ))$ for all relation symbols $R \in \Sigma$ of sort $S_1 \times ... \times S_n$
    \end{enumerate}
    Here $\pi_i$ denotes the projection $\brac{S_1} \times ... \times \brac{S_n} \rightarrow \brac{S_i}$ for any $i \leq n$ and we note that $\ang{F\pi_1,...,F\pi_n}^{-1}$ exists since $F$ preserves finite products. 
    We denote the interpretation $\brac{\cdot}'$ by $(F,\eta)(\brac{\cdot})$.
\end{definition}

The following proposition, which can be proven via induction on formulas, states that (full) morphisms of hyperdoctrines preserve (and reflect) the validity of sequents.

\begin{prop}{morphism of models}
    Let $(F,\eta): \bfp \rightarrow \bfq$ be a morphism of (universally) coherent hyperdoctrines, and let $\brac{\cdot}$ be an interpretation of a signature $\Sigma$ in $\bfp$. Then for any (universally) coherent sequent $S$ over $\Sigma$ the following holds:
    \begin{align*}
         S \text{ is valid in } \brac{\cdot} \implies S \text{ is valid in } (F,\eta)(\brac{\cdot})
    \end{align*}
    If moreover $(F,\eta)$ is full, then the following holds:
    \begin{align*}
        S \text{ is valid in } \brac{\cdot} \iff S \text{ is valid in } (F,\eta)(\brac{\cdot})
    \end{align*}
\end{prop}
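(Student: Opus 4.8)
The plan is to prove, by a simultaneous induction on terms and formulas, a \emph{translation lemma} relating the two interpretations, and then to read off the proposition from the monotonicity (and, in the full case, order-reflection) of the resulting translation map. Throughout, fix a context $\bar x = x_1^{S_1},\dots,x_n^{S_n}$, write $\pi_i : \mathbf{s}(\bar x) \to \brac{S_i}$ for the projections in $\mathcal{C}$, and let $\theta_{\bar x} := \ang{F\pi_1,\dots,F\pi_n} : F\mathbf{s}(\bar x) \to \mathbf{s}'(\bar x)$ be the canonical comparison map into $\mathbf{s}'(\bar x) = F\brac{S_1}\times\dots\times F\brac{S_n}$, where $\brac{\cdot}' = (F,\eta)(\brac{\cdot})$; since $F$ preserves finite products, $\theta_{\bar x}$ is an isomorphism, so $\bfq(\theta_{\bar x}^{-1})$ is an isomorphism of preorders and in particular is monotone, reflects the order, and preserves finite meets and joins up to $\simeq$. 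I set $\Phi_{\bar x} := \bfq(\theta_{\bar x}^{-1}) \circ \eta_{\mathbf{s}(\bar x)} : \bfp(\mathbf{s}(\bar x)) \to \bfq(\mathbf{s}'(\bar x))$; because $\eta_{\mathbf{s}(\bar x)}$ is a morphism of ${\mathbf{DistPrelattices}}$ and $\bfq(\theta_{\bar x}^{-1})$ is a lattice isomorphism, $\Phi_{\bar x}$ is monotone and preserves $\top,\bot,\wedge,\vee$ up to $\simeq$. The translation lemma then has two clauses: for a term $(t\mid\bar x)$ of sort $S$, $\brac{t}'_{\bar x} = F(\brac{t}_{\bar x}) \circ \theta_{\bar x}^{-1}$ as morphisms $\mathbf{s}'(\bar x) \to F\brac{S}$; and for a formula $(\phi\mid\bar x)$, $\brac{\phi}'_{\bar x} \simeq \Phi_{\bar x}(\brac{\phi}_{\bar x})$ in $\bfq(\mathbf{s}'(\bar x))$.

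The term clause is a direct induction: the variable case reduces to the identity $\pi_i' \circ \theta_{\bar x} = F\pi_i$, and the function-symbol case unwinds the definition of $\brac{f}'$ against the inductive hypotheses. For the formula clause, the atomic cases are where the structure of the morphism $(F,\eta)$ enters. The relational case $R(t_1,\dots,t_m)$ follows from clause (iii), the definition of $\brac{R}'$, the term clause, and the pseudonaturality of $\eta$, which lets one commute $\eta$ past the reindexing $\ang{\brac{t_1}_{\bar x},\dots}^*$ up to $\simeq$. The equality case follows from clause (iv) together with condition (2) in the definition of a morphism of hyperdoctrines, which says precisely that $\eta$ transports the equality predicate to the equality predicate up to the comparison isomorphism.

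The inductive steps split into the propositional connectives and the quantifiers. The cases $\top,\bot,\wedge,\vee$ are immediate from the fact, noted above, that $\Phi_{\bar x}$ preserves the lattice structure up to $\simeq$, combined with the inductive hypotheses. The quantifier cases use condition (1), namely $\eta_{\mathbf{s}(\bar x)}\circ\exists^\bfp_\pi\simeq\exists^{\bfq\circ F^{op}}_\pi\circ\eta_{\mathbf{s}(\bar x y)}$ and its dual for $\forall$ in the universally coherent case. Here one must match the projection $\pi':\mathbf{s}'(\bar x y)\to\mathbf{s}'(\bar x)$ appearing in $\brac{\cdot}'$ against $F\pi$ through the comparison isomorphisms via the naturality identity $\pi'\circ\theta_{\bar x y} = \theta_{\bar x}\circ F\pi$, and then transport the reindexing $\bfq(\theta^{-1})$ through the adjoint. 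I expect this to be the main obstacle: unlike the connective cases it is not purely formal, since it requires the Beck--Chevalley condition (applied to the isomorphisms $\theta$) to move $\bfq(\theta^{-1})$ past $\exists$ and $\forall$, and it is the step where the coherence of the comparison isomorphisms with projections must be verified with care.

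Finally, to conclude the proposition: a sequent $\phi \vdash_{\bar x}\psi$ holds in $\brac{\cdot}$ iff $\brac{\phi}_{\bar x}\leq\brac{\psi}_{\bar x}$, and holds in $\brac{\cdot}'$ iff, by the translation lemma, $\Phi_{\bar x}(\brac{\phi}_{\bar x})\leq\Phi_{\bar x}(\brac{\psi}_{\bar x})$. Since $\Phi_{\bar x}$ is monotone, validity in $\brac{\cdot}$ implies validity in $\brac{\cdot}'$, which is the first implication; sequents of the form $\phi\dashv\hspace{1pt}\vdash_{\bar x}\psi$ are handled by applying this in both directions. When $(F,\eta)$ is an embedding, the defining property $\eta_{\mathbf{s}(\bar x)}(A)\leq\eta_{\mathbf{s}(\bar x)}(B)\iff A\leq B$, together with the fact that $\bfq(\theta_{\bar x}^{-1})$ reflects the order, shows that $\Phi_{\bar x}$ reflects the order as well; this yields the converse implication and hence the stated equivalence.
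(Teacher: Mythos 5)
Your proof is correct and is precisely the induction on terms and formulas that the paper invokes without carrying out (the paper only remarks that the proposition ``can be proven via induction on formulas''). The one step you flag as the main obstacle --- moving $\bfq(\theta_{\bar x}^{-1})$ past $\exists$ and $\forall$ --- does not actually require the Beck--Chevalley condition: since each $\theta$ is an isomorphism, $\bfq(\theta^{-1})$ is an adjoint equivalence, and the required commutation follows from uniqueness of adjoints applied to the identity $\pi'\circ\theta_{\bar x y}=\theta_{\bar x}\circ F\pi$.
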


\section{A hyperdoctrine for continuous model theory}

In this section we will introduce a hyperdoctrine for continuous model theory. In continuous model theory the sorts are interpreted as complete metric spaces, say of diameter 1, while the function symbols are interpreted as uniformly continuous functions. So this suggests that the base of the hyperdoctrine $\mathbf{CMT}$ for continuous model theory should be the category $\mathbf{cMet}_1$. The literature on continuous model theory also leaves no doubt about what predicates should be: the predicates on an element $(X,d)$ in $\mathbf{cMet}_1$ are the uniformly continuous functions $\alpha: (X, d) \to [0,1]$, while for $f: (Y,d) \rightarrow (X,d)$ in $\mathbf{cMet}_1$ we define $\mathbf{CMT}_{}(f): \mathbf{CMT}_{}(X,d) \rightarrow \mathbf{CMT}_{}(Y,d)$ as the function taking $\alpha$ to $\alpha \circ f$. 

That this is how things should be can be glanced from any text on continuous model theory. But what is not at all clear is how the predicates should be \emph{(pre)ordered}. Indeed, this has to do with the absense of any notion of logical consequence in continuous model theory. In \cite{benyaacovetal08} we find:
\begin{quote}
One of the subtleties of continuous first-order logic is that it is essentially a positive logic. In particular, there is no direct way to express an implication between conditions. This is inconvenient in applications, since many natural mathematical properties are stated using implications.
\end{quote}
But the authors of \cite{benyaacovetal08} go on to say that:
\begin{quote}
    However, when working in a saturated model or in all models of a theory, this obstacle can be clarified and often overcome in a natural way, which we explain here.
    
    Let $L$ be any signature for metric structures. For the rest of this section we fix two $L$-formulas $\varphi(x_1,...,x_n)$ and $\psi(x_1,...,x_n)$ and an $L$-theory $T$. For convenience we write $x$ for $x_1, \ldots , x_n$.

\noindent    
{\bf 7.14 Proposition.} Let $\mathcal{M}$ be an $\omega$-saturated model of $T$. The following statements are equivalent:
\begin{enumerate}
\item[(1)] For all $a \in \mathcal{M}^n$, if $\varphi_{\mathcal{M}}(a) = 0$ then $\psi_{\mathcal{M}}(a) = 0$. 
\item[(2)] $\forall \varepsilon > 0 \, \exists \delta > 0 \, \forall a \in \mathcal{M}^n ( \, \varphi_{\mathcal{M}}(a)<\delta \Rightarrow  \psi_{\mathcal{M}}(a) \leq \varepsilon \, )$.
\item[(3)] There is an increasing, continuous function $\alpha: [0,1] \to [0,1]$ with $
\alpha(0)=0$ such that $\psi (a) \leq \alpha(\varphi (a))$ for all $a \in \mathcal{M}^n$.
\end{enumerate}
\end{quote}
This passage, and in particular point (2) in the proposition above, is the closest to a suggestion for a notion of consequence for continuous model theory that we are aware of. What we will do in this paper is take this up and see where this gets us. That is, we will introduce the following preorder on the predicates of $\mathbf{CMT}$: for any two uniformly continuous functions $\alpha, \beta: (X,d) \to [0,1]$ we will write $\alpha \sq \beta$ if for any $\varepsilon >0$ there exists a $\delta>0$ such that for all $x \in X$ we have that $\alpha(x) < \delta$ implies $\beta(x) \leq \varepsilon$. 

In this section we will show that this yields a universally coherent hyperdoctrine, where the interpretation of the connectives mirrors the standard interpretation of logic in continuous model theory.

Besides having its basis in the literature on continuous model theory and the fact that it gives the correct interpretation of logical formulas, there are two more reasons to think that this is the correct way of ordering the predicates. First of all, given the prominent place of the concept of uniform continuity in continuous model theory, it makes sense that the ordering of the predicates reflects this fact. Secondly, there is a stripped down version of the hyperdoctrine $\mathbf{CMT}$, which we call $\mathbf{U}$. In the next section we will prove that the category of complete metric spaces and uniformly continuous maps can be seen as the category of equivalence relation over $\mathbf{U}$. This gives a logical reconstruction of the category ${\mathbf{cMet}}_1$ purely in terms of this way of ordering predicates. 

All of this suggests to us that for the purposes of continuous model theory this choice of preordering the predicates is the right one. There is also a problem, however, which we will discuss after we have proved that $\mathbf{CMT}$ is a universally coherent hyperdoctrine. 

\subsection{The hyperdoctrine $\mathbf{U}$} Before we discuss $\mathbf{CMT}$, we will first discuss the simplified version of it which we have called $\mathbf{U}$; this hyperdoctrine $\mathbf{U}$ will also play a crucial role in the embedding of $\mathbf{CMT}$ into the subobject hyperdoctrine of a topos.

The starting point for the hyperdoctrine $\mathbf{U}$ is that the ordering $\alpha \sq \beta$ makes sense more generally.

\begin{defi}{ordering sq definition} Let $X$ be any set. Let $\alpha,\beta: X \rightarrow [0,1]$. We write $\alpha \sq \beta$ if for any $\varepsilon >0$ there exists a $\delta>0$ such that for all $x \in X$ we have that $\alpha(x) \leq \delta$ implies $\beta(x) \leq \varepsilon$.
\end{defi}

The same ordering can be defined in other ways as well:
\begin{prop}{ordering modulus of continuity}\rm{(Proposition 2.10 in \cite{yaacov_usvyatsov_2010})} 
Let $\alpha, \beta: X \rightarrow [0,1]$ be arbitrary functions. 
The following conditions are equivalent:
\begin{enumerate}
    \item $\alpha \sq \beta$. 
    \item There exists an increasing function $\Delta: (0,1] \rightarrow (0,1]$ such that for any $\varepsilon > 0$ and $x \in X$ we have that $\alpha(x) \leq \Delta(\varepsilon)$ implies $\beta(x) \leq \varepsilon$.
    \item There exists an increasing, continuous function $F: [0,1] \rightarrow [0,1]$ such that $F(0) = 0$ and for all $x \in X$ we have that $\beta(x) \leq F(\alpha(x))$.
\end{enumerate}
\end{prop}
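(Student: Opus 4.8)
The plan is to prove the three conditions equivalent via the cycle $(1) \Rightarrow (3) \Rightarrow (2) \Rightarrow (1)$. The two steps $(3) \Rightarrow (2)$ and $(2) \Rightarrow (1)$ are routine, so essentially all the content lies in manufacturing the continuous majorant in $(1) \Rightarrow (3)$. Throughout I read ``increasing'' as non-decreasing, and deal with the strict version at the end.

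For $(2) \Rightarrow (1)$ there is nothing to do: given the modulus $\Delta$, for $\varepsilon > 0$ take $\delta = \Delta(\min(\varepsilon,1)) > 0$ (when $\varepsilon > 1$ the implication is vacuous since $\beta \leq 1$), and the defining implication of $(2)$ is exactly the one demanded by the definition of $\sq$. For $(3) \Rightarrow (2)$ I would invert $F$ by setting $\Delta(\varepsilon) = \sup\{ t \in [0,1] : F(t) \leq \varepsilon\}$ for $\varepsilon \in (0,1]$. Since $F$ is continuous with $F(0) = 0$, this sublevel set is a nonempty closed interval $[0,\Delta(\varepsilon)]$ with $\Delta(\varepsilon) > 0$, and it grows with $\varepsilon$, so $\Delta : (0,1] \to (0,1]$ is non-decreasing. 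If $\alpha(x) \leq \Delta(\varepsilon)$ then $F(\alpha(x)) \leq \varepsilon$ by monotonicity of $F$, whence $\beta(x) \leq F(\alpha(x)) \leq \varepsilon$, which is $(2)$.

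The heart of the argument is $(1) \Rightarrow (3)$. First I would extract from $(1)$, applied to $\varepsilon = 2^{-n}$, a strictly decreasing sequence $1 = \delta_0 > \delta_1 > \delta_2 > \cdots \to 0$ with $\alpha(x) \leq \delta_n \Rightarrow \beta(x) \leq 2^{-n}$ for all $x$ and all $n$; this is arranged by shrinking the $\delta$'s produced by $(1)$ to force strict decrease and convergence to $0$, which only strengthens the implications. I would then let $F : [0,1] \to [0,1]$ be the piecewise-linear function taking the value $2^{-n+1}$ at each node $\delta_n$ for $n \geq 1$, equal to $1$ on $[\delta_1,1]$, and with $F(0) = 0$; continuity at $0$ is automatic because $F(\delta_n) = 2^{-n+1} \to 0$. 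This $F$ is continuous, non-decreasing, sends $[0,1]$ into $[0,1]$, and has $F(0) = 0$. For the domination $\beta(x) \leq F(\alpha(x))$, fix $x$ and locate $\alpha(x)$: if $\alpha(x) \in (\delta_{n+1},\delta_n]$ then $\beta(x) \leq 2^{-n}$ while $F(\alpha(x)) \geq F(\delta_{n+1}) = 2^{-n}$, and the cases $\alpha(x) \geq \delta_1$ and $\alpha(x) = 0$ are immediate.

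I expect the \textbf{main obstacle} to be precisely this construction of $F$: condition $(1)$ controls $\beta$ only ``to the right of $0$'', so one must convert a countable family of $\varepsilon$--$\delta$ implications into a single continuous monotone function, and interpolating linearly over a dyadic sequence of nodes is the device that does this cleanly while keeping all values in $[0,1]$. A secondary, cosmetic point is the reading of ``increasing'': the $F$ and $\Delta$ produced above are non-decreasing, and if a strictly increasing version is wanted one can replace $\Delta(\varepsilon)$ by $\tfrac{\varepsilon}{2}\Delta(\varepsilon)$ and add a small linear term to $F$, neither of which disturbs the defining implications.
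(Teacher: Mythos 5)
Your proof is correct. Note that the paper itself gives no proof of this proposition: it is stated with a citation to Proposition 2.10 of Ben Yaacov--Usvyatsov, so there is nothing in the paper to compare against line by line. Your argument is essentially the standard one from that reference: the routine directions $(3)\Rightarrow(2)\Rightarrow(1)$ are handled exactly as one would expect (inverting $F$ via the sublevel sets $\{t : F(t)\leq\varepsilon\}$, which are closed initial intervals with positive right endpoint by continuity and monotonicity of $F$), and the substantive step $(1)\Rightarrow(3)$ is done by extracting a strictly decreasing null sequence of moduli $\delta_n$ for $\varepsilon = 2^{-n}$ and interpolating piecewise-linearly with node values $F(\delta_n) = 2^{-n+1}$; the one-index offset is exactly what makes $F(\alpha(x)) \geq F(\delta_{n+1}) = 2^{-n} \geq \beta(x)$ on $(\delta_{n+1},\delta_n]$, and the edge cases $\alpha(x)=0$ and $\alpha(x)\geq\delta_1$ are correctly dispatched. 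Your closing remark on ``increasing'' is also apt: in the cited source the word means non-decreasing, so no strictification is needed.
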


\begin{rema}{modulusofunifcont}
If $\alpha,\beta$ is a pair of functions with $\alpha \sq \beta$ then we call the function $\Delta$ in the second condition of \refprop{ordering modulus of continuity} a \textit{modulus of uniform continuity} for $\alpha \sq \beta$. Obviously, $\Delta$ is not unique. The ordering in \refdefi{ordering sq definition} can be used to characterise uniform continuity of functions between metric spaces: if $(X,d_X)$ and $(Y,d_Y)$ are objects in $\mathbf{cMet}_1$ and $f$ is a function $X \rightarrow Y$ then $f$ is a uniformly continuous function $(X,d_X) \rightarrow (Y,d_Y)$ if and only if $d_X \sq (d_Y \circ (f \times f))$. In this case, our definition of a modulus of uniform continuity is the familiar notion of a modulus of uniform continuity from analysis.
\end{rema}

In addition, we have the following intuitive representation of the statement $\alpha \sq \beta$ that is based on the convergence of sequences to zero. We omit the simple proof.
\begin{prop}{Preliminaries, characterisation of sq}
Let $X$ be any set. Let $\alpha,\beta: X \rightarrow [0,1]$. Then the following conditions are equivalent:
\begin{enumerate}[label=\arabic*)]
    \item $\alpha \sq \beta$.
    \item For any sequence $(x_n)_{n \in \N}$ in $X$ we have that $\alpha(x_n) \rightarrow 0$ implies $\beta(x_n) \rightarrow 0$.
\end{enumerate}
\end{prop}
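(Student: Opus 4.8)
The plan is to prove the two implications separately, handling the direction $(1)\Rightarrow(2)$ directly and the direction $(2)\Rightarrow(1)$ by contraposition.

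For $(1)\Rightarrow(2)$, I would assume $\alpha \sq \beta$ and take an arbitrary sequence $(x_n)_{n \in \N}$ with $\alpha(x_n) \to 0$. To show $\beta(x_n) \to 0$, fix $\varepsilon > 0$. By \refdefi{ordering sq definition} there is a $\delta > 0$ such that $\alpha(x) \leq \delta$ implies $\beta(x) \leq \varepsilon$ for all $x \in X$. Since $\alpha(x_n) \to 0$, there is a $K \in \N$ with $\alpha(x_n) \leq \delta$ for all $n \geq K$, and hence $\beta(x_n) \leq \varepsilon$ for all such $n$. As $\varepsilon$ was arbitrary, this gives $\beta(x_n) \to 0$.

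For $(2)\Rightarrow(1)$, I would argue contrapositively: assume $\alpha \nsq \beta$ and produce a sequence violating $(2)$. Negating the definition, there is some fixed $\varepsilon > 0$ such that for every $\delta > 0$ there exists an $x \in X$ with $\alpha(x) \leq \delta$ yet $\beta(x) > \varepsilon$. Applying this with $\delta = \tfrac{1}{n}$ for each $n \in \N$ (with $n \geq 1$) yields points $x_n \in X$ satisfying $\alpha(x_n) \leq \tfrac{1}{n}$ and $\beta(x_n) > \varepsilon$. Then $\alpha(x_n) \to 0$ while $\beta(x_n) > \varepsilon$ for all $n$, so $\beta(x_n) \not\to 0$; this sequence witnesses the failure of $(2)$.

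Since this is an elementary manipulation of quantifiers, there is no serious obstacle. The only point requiring a little care is the selection of the witnesses $x_n$ in the contrapositive step, where one uses a countable sequence of choices $\delta = \tfrac{1}{n}$ to convert the uniform failure of the implication into a single sequence; everything else is a routine unwinding of the $\varepsilon$-$\delta$ definition and the definition of convergence to zero.
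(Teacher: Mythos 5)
Your proof is correct; the paper explicitly omits the proof of this proposition as ``simple,'' and your two-part argument (direct unwinding of the $\varepsilon$--$\delta$ definition for $(1)\Rightarrow(2)$, and the contrapositive with witnesses chosen at $\delta = \tfrac{1}{n}$ for $(2)\Rightarrow(1)$) is exactly the routine argument the authors had in mind. No gaps.
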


\begin{rema}{convergencetothetruth}
Since we think of 0 as the truth in continuous logic, one can put the second condition in the previous proposition in words as follows: whenever $\alpha$ convergences to the truth, then so does $\beta$. 
\end{rema}

If $X$ is a set, then $\sq$ is a preordering on the set of all functions $X \rightarrow [0,1]$. In fact, this gives us a hyperdoctrine, as we will see in \refprop{Sub_UC definition}. 

\begin{defi}{P_UC}
    If $X$ is any set, then we define $\mathbf{U}(X)$ to be be the preorder $(\{X \rightarrow [0,1]\}, \sq)$. If $f: X \rightarrow Y$ is a function, then we let $\mathbf{U}(f): \mathbf{U}(Y) \rightarrow \mathbf{U}(X)$ be the function that maps an element $\alpha \in \mathbf{U}(Y)$ to $\alpha \circ f$.
\end{defi}

\begin{prop}{Sub_UC definition}
    The assignment $X \mapsto \mathbf{U}(X)$ for any set $X$ and $f \mapsto \mathbf{U}(f)$ for any function $f$ determines a universally coherent hyperdoctrine $\mathbf{U}: {\mathbf{Sets}}^{op} \rightarrow {\mathbf{DistPrelattices}}$.
\end{prop}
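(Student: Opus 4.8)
The plan is to exhibit explicit witnesses for every piece of structure demanded by \refdefi{Hyperdoctrines} and then discharge the axioms by direct, mostly pointwise, computation; the guiding principle is that $0$ is ``truth''. Concretely, I would take the top element $\top_X$ of $\mathbf{U}(X)$ to be the constant function $0$ and the bottom $\bot_X$ the constant function $1$, and define the binary connectives pointwise by $\alpha \wedge \beta := \max(\alpha,\beta)$ and $\alpha \vee \beta := \min(\alpha,\beta)$. The first task is to verify that these are the lattice operations for $\sq$: one checks that $\max(\alpha,\beta)$ is the meet and $\min(\alpha,\beta)$ the join by a short $\varepsilon$--$\delta$ argument, in each greatest-lower-bound/least-upper-bound step taking the modulus $\delta$ to be the minimum of the two $\delta$'s supplied by the hypotheses. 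That $0$ and $1$ are top and bottom is immediate, since $\gamma \sq 0$ holds automatically (its conclusion $0 \leq \varepsilon$ is always true) and $1 \sq \gamma$ holds vacuously (for a modulus below $1$ the premise $1 \leq \delta$ never holds). Distributivity is inherited from the pointwise distributivity of $\max$ and $\min$ on the chain $[0,1]$, and as these identities hold on the nose they descend to the poset reflection. Because $\mathbf{U}(f)(\alpha) = \alpha \circ f$ is precomposition, it commutes strictly with $\max,\min,0,1$ and with composition, so $\mathbf{U}$ is in fact a strict functor into $\mathbf{DistPrelattices}$ and each $\mathbf{U}(f)$ is a lattice homomorphism; this settles the indexed-preorder part.

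Next I would treat equality, taking $Eq_X : X \times X \to [0,1]$ to be the ``discrete'' predicate with $Eq_X(x,x') = 0$ if $x = x'$ and $Eq_X(x,x') = 1$ otherwise. For condition (i) of \refdefi{equality in a doctrine} the key observation is that, since $0$ is the top element, $\top_X \leq \mathbf{U}(\delta)(A)$ says precisely that $A(x,x) = 0$ for every $x$; and since $Eq_X(x,x') \leq t$ for a threshold $t < 1$ holds exactly when $x = x'$, the relation $Eq_X \sq A$ amounts to the same condition. Condition (ii) is a pointwise identity: both $\mathbf{U}(\pi_{13})(Eq_A) \wedge \mathbf{U}(\pi_{24})(Eq_B)$ and $Eq_{A \times B}$ evaluate to $0$ exactly on quadruples $(a,b,a',b')$ with $a = a'$ and $b = b'$, and to $1$ elsewhere.

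For the quantifiers I would set $\exists_\pi(\phi)(x) := \inf_{y} \phi(x,y)$ and $\forall_\pi(\phi)(x) := \sup_{y} \phi(x,y)$ for a projection $\pi : X \times Y \to X$; these are well-defined elements of $\mathbf{U}(X)$ because $[0,1]$ is a complete lattice (with the conventions $\inf\emptyset = 1$, $\sup\emptyset = 0$ taking care of $Y = \emptyset$). The adjunctions $\exists_\pi \dashv \mathbf{U}(\pi)$ and $\mathbf{U}(\pi) \dashv \forall_\pi$ are then verified by $\varepsilon$--$\delta$ chases, or, if preferred, through the convergent-sequence characterisation of $\sq$. The Beck--Chevalley and Frobenius conditions I expect to fall out almost for free: in a pullback whose horizontal legs are projections the top vertical map has the form $g \times \mathrm{id}_Y$, so both $\exists_\pi \circ \mathbf{U}(g \times \mathrm{id}_Y)$ and $\mathbf{U}(g) \circ \exists_{\pi'}$ send $\psi$ to $x \mapsto \inf_y \psi(g(x),y)$ and agree on the nose (and likewise with $\sup$); Frobenius reduces to the elementary identity $\inf_y \max(c,\,\phi(x,y)) = \max\bigl(c,\ \inf_y \phi(x,y)\bigr)$, valid since $\max(c,-)$ is monotone and continuous on $[0,1]$.

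The one step that is not a pure pointwise identity, and hence the main obstacle, is the forward direction of the existential adjunction: from $\phi \sq \mathbf{U}(\pi)(\alpha)$ one must deduce $\exists_\pi(\phi) \sq \alpha$, and here a bound on the infimum $\inf_y \phi(x,y)$ does not directly hand one a single witness $y$ with $\phi(x,y)$ controlled. The fix is to shrink the modulus: given the $\delta$ furnished for a target $\varepsilon$, the inequality $\inf_y \phi(x,y) \leq \delta/2$ forces the existence of some $y$ with $\phi(x,y) < \delta$, which then yields $\alpha(x) \leq \varepsilon$. It is worth emphasising that, in contrast to the analogous verification for $\mathbf{CMT}$, no appeal to \refprop{inf sup continuous} on the preservation of uniform continuity by $\inf$ and $\sup$ is required, precisely because the base here is $\mathbf{Sets}$ and there is no continuity constraint on the predicates.
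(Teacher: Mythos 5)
Your proposal is correct and follows essentially the same route as the paper's own proof: the same pointwise lattice operations ($\top = 0$, $\bot = 1$, meet $=\max$, join $=\min$), the same $\inf$/$\sup$ adjoints, the same halving-the-modulus trick to extract a witness from a bound on an infimum, and the same Frobenius identity. The only substantive difference is that you explicitly verify the equality predicate $Eq_X$ (the discrete $\{0,1\}$-valued relation), a clause of the definition that the paper's proof passes over in silence, so if anything your write-up is slightly more complete.
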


\begin{proof}
We first check that $\mathbf{U}$ is a functor $\mathbf{U}: {\mathbf{Sets}}^{op} \rightarrow {\mathbf{DistPrelattices}}$.
The top element of $\mathbf{U}(X)$ is given by the function that is zero everywhere. The bottom element of $\mathbf{U}(X)$ is any function $f: X \rightarrow [0,1]$ such that $\inf_{x \in X} f(x) > 0$, such as the constant 1 function. Binary meets and binary joins are the maximum and minimum, respectively. That $\mathbf{U}(X)$ is distributive follows immediately from the fact that the minimum distributes over the maximum.

If $f: Y \rightarrow X$ is a function, then $\mathbf{U}f$ is clearly order preserving. In addition, it preserves both top and bottom elements. For finite joins, note that $f^*(\alpha \wedge \beta)(y) = \min( \alpha(f(y)), \beta(f(y)) )= (f^*(\alpha) \wedge f^*(\beta))(y)$ for any $\alpha, \beta \in \mathbf{U}(X)$ and $y \in Y$. The case of meets is  similar. In addition, functoriality of $\mathbf{U}$ follows directly from the fact that function composition is associative and unital.

Next, we construct left adjoints to pullback: Given functions $f: Y \rightarrow X$ and $\alpha: Y \rightarrow [0,1]$ we define for $x \in X$
\begin{equation*}
    (\exists_f\alpha)(x) := 
    \ \inf \{ \alpha(y) \ | \ y \in Y, f(y) = x \}
\end{equation*}
We show that $\exists_f$ is order-preserving $\mathbf{U}(X) \rightarrow \mathbf{U}(Y)$. Suppose we are given $\alpha, \beta \in \mathbf{U}(Y)$ with $\alpha \sq \beta$  and modulus of uniform continuity $\Delta$. Let $\varepsilon \in (0,1)$. Note that the infimum of the empty set is equal to $1$. If $(\exists_f \alpha)(x) \leq \frac{1}{2} \Delta(\varepsilon)$, then $x = f(y)$ for some $y \in Y$ with $\alpha(y) \leq \Delta (\varepsilon)$. But then $\beta(y) \leq \varepsilon$ so in particular $\inf \{\beta(y) \ | \ y \in Y, f(y) = x\} \leq \varepsilon$. This gives $\exists_f\beta(x) \leq \varepsilon$; hence $\exists_f(\alpha) \sq \exists_{f}(\beta)$ and $\exists_f$ is order-preserving.

Let $f: Y \rightarrow X$. We want to show that $\exists_f \dashv \mathbf{U}(f)$, or in other words, that for any $\alpha \in \mathbf{U}(Y)$ and $\beta \in \mathbf{U}(X)$ it holds that $\alpha \sq \beta \circ f$ if and only if $\exists_f \alpha \sq \beta$. So suppose that $\alpha \sq \beta \circ f$ with modulus of uniform continuity $\Delta_1$. Let $\varepsilon \in (0,1)$. If $(\exists_f \alpha)(x) \leq \frac{1}{2} \Delta_1 (\varepsilon)$, then there exists a $y \in Y$ such that $f(y) = x$ and $\alpha(y) \leq \Delta_1(\varepsilon)$ so $(\beta \circ f) (y) \leq \varepsilon$. But then $\beta(x) = \beta(f(y)) \leq \varepsilon$ hence $\exists_f \alpha \sq \beta$. 

Conversely, suppose $\exists_f \alpha \sq \beta$ with corresponding modulus $\Delta_2$. Let $\varepsilon \in (0,1)$. If $\alpha(y) \leq \Delta_2(\varepsilon)$ then $(\exists_f \alpha)(f(y)) = \inf\{\alpha(y) \ | \ f(y) = f(y) \} = \alpha(y) \leq \Delta_2(\varepsilon)$ which implies that $\beta(f(y)) \leq \varepsilon$ hence $\alpha \sq \beta \circ f$.

We also have right adjoints to pullback: for any function $f: Y \rightarrow X$ and function $\alpha: Y \rightarrow [0,1]$ and $x \in X$ we let
\begin{equation*}
    (\forall_f\alpha)(x) := 
    \sup \{ \alpha(y) \ | \ y \in Y, f(y) = x \}
\end{equation*}
Let $f: Y \rightarrow X$. To show that $\forall_f$ is order-preserving, suppose that $\alpha \sq \gamma$ in $\mathbf{U}(Y)$ with corresponding modulus of uniform continuity $\Delta$. Let $\varepsilon \in (0,1)$. Suppose $\forall_f \alpha (x) \leq \Delta(\varepsilon)$ for some $x \in X$. If $x = f(y)$ for some $y \in Y$ then $\alpha(y) \leq \sup \{ \alpha(y') \ | \ y' \in Y, f(y') = x \} \leq \Delta(\varepsilon)$ so $\gamma(y) \leq \varepsilon$. Then it follows that $\forall_f \gamma(x) = \sup \{ \gamma (y') | y \in Y, f(y') = x\} \leq \varepsilon$. Therefore $\forall_f(\alpha) \sq \forall_f(\gamma)$, hence $\forall_f$ is order-preserving.

 Next, let $f: Y \rightarrow X$. We show that $\mathbf{U}(f) \dashv \forall_f$. Suppose that $\alpha \in \mathbf{U}(Y)$ and $\beta \in \mathbf{U}(X)$ such that $f^* \beta \sq \alpha$ with modulus of uniform continuity $\Delta$. Suppose that $x \in X$ such that $\beta(x) \leq \Delta(\varepsilon)$. If $f(y) = x$ for some $y \in Y$, then $f^*\beta (y) = \beta(f(y)) = \beta (x) \leq \Delta(\varepsilon)$, so $\alpha(y) \leq \varepsilon$. Therefore $\forall_f \alpha(x) \leq \varepsilon$ and we deduce that $\beta \sq \forall_f \alpha$.

Conversely, suppose that $\beta \sq \forall_f(\alpha)$ with modulus $\Delta$. Let $y \in Y$ with $\beta(f(y)) \leq \Delta(\varepsilon)$. Then $\alpha(y) \leq \sup \{\alpha(y') \ | \ y' \in Y, f(y') = f(y) \} = \forall_f(\alpha)(f(y)) \leq \varepsilon$, hence $f^*\beta \sq \alpha$.

Next, we show that $\mathbf{U}$ satisfies the Beck-Chevalley condition: Let 
    \begin{equation}
    \begin{tikzcd}
    D \arrow[r, "g"] \arrow[d, "f"'] & C \arrow[d, "h"] \\
    B \arrow[r, "k"']                & A             
    \end{tikzcd} \label{Beck-Chevalley 1}  
    \end{equation}
be a pullback in ${\mathbf{Sets}}$. We need to show that $\exists_g \circ f^* \simeq h^* \circ \exists_k$. Since (\ref{Beck-Chevalley 1}) is a pullback, it is equivalent to the following diagram in ${\mathbf{Sets}}$.
    \begin{equation}
    \begin{tikzcd}
    D' \arrow[r, "\pi_C"] \arrow[d, "\pi_B"'] & C \arrow[d, "h"] \\
    B \arrow[r, "k"']                & A             
    \end{tikzcd}
    \end{equation}
    Here $D' := \{ (b,c) \in B \times C \ | \ k(b) = h(c) \}$ and $\pi_B$ and $\pi_C$ are the projections. More precisely, there exists an isomorphism $\eta: D' \rightarrow D$ such that $f \circ \eta = \pi_B$ and $g \circ \eta = \pi_C$. \\
    Let $c \in C$ and let $\beta \in \mathbf{U}(B)$. We can write the following:
    \begin{align}
        (h^* \circ \exists_k)(\beta)(c) &= \sup \{\beta(b) \ | \ b \in B, k(b) = h(c) \} \\
        &= \sup \{ \beta ( \pi_B ( (b,c') ) \ | \ (b,c') \in D', \pi_C ((b,c')) = c \} \\
        &= \sup \{ \beta ( f ( \eta ( (b,c') ))) \ | \ (b,c') \in D', g( \eta ((b,c'))) = c \} \\
        &= \sup \{ \beta ( f(d)) \ | \ d \in D , g(d) = c\} \label{Beck-Chevalley 2}\\
        &= (\exists_g \circ f^*) (\beta)(c)
    \end{align}
    Here we use that $\eta$ is an isomorphism with $f \circ \eta = \pi_B$ and $g \circ \eta = \pi_C$ to obtain (\ref{Beck-Chevalley 2}). 
    
Lastly, we show that $\mathbf{U}$ satisfies the Frobenius condition: Let $i: Y \rightarrow X$ be a function and $\phi \in \mathbf{U}(X)$ and let $\psi \in \mathbf{U}(Y)$ and $x \in X$. We get the following:
\begin{align*}
    ((\exists_i \psi) \wedge \phi)(x) &= \max(\phi(x), \inf \{ \psi(y) \ | \ y \in Y, i(y) = x \}) \\
    &= \inf \{ \max(\psi(y), \phi(x))  \ | \ y \in Y, i(y) = x \} \\
    &= \inf \{ \max(\psi(y),\phi ( i(y))) \ | \ y \in Y , i(y) = x\} \\
    &= \exists_i( \psi \wedge i^*(\phi))(x)
\end{align*}
\end{proof}

The hyperdoctrine $\mathbf{U}$ is not first-order, as it lacks a Heyting implication, in general.

\begin{prop}{P(X) no Heyting implication}
If $X$ is an infinite set then $\mathbf{U}(X)$ does not have a Heyting negation; for that reason, it also does not have a Heyting implication.
\end{prop}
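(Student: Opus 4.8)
The plan is to reduce the non-existence of a Heyting negation to the non-existence of a greatest element in a certain subset of $\mathbf{U}(X)$, and then to exhibit a predicate $\alpha$ for which this subset has no greatest element. Recall that in a meet-preorder with a bottom, the Heyting negation $\neg\alpha$, when it exists, is by definition the $\sq$-largest element $\gamma$ (up to $\simeq$) with $\gamma \wedge \alpha \sq \bot$. By \refprop{Sub_UC definition} the meet $\gamma \wedge \alpha$ is the pointwise maximum and $\bot$ is represented by the constant function $1$. Unfolding \refdefi{ordering sq definition} with this representative shows that for any $\mu \colon X \to [0,1]$ we have $\mu \sq \bot$ if and only if $\inf_{x \in X} \mu(x) > 0$ (these are exactly the bottom elements of $\mathbf{U}(X)$ by \refprop{Sub_UC definition}): for $\varepsilon < 1$ the defining condition forces some $\delta > 0$ with $\mu(x) > \delta$ for all $x$. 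Writing $S_\alpha := \{ \gamma \in \mathbf{U}(X) : \inf_{x} \max(\gamma(x),\alpha(x)) > 0 \}$, the negation $\neg\alpha$ exists precisely when $S_\alpha$ has a $\sq$-greatest element.

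Two preliminary observations drive the argument. First, if $\gamma' \leq \gamma$ pointwise then $\gamma \sq \gamma'$ (take $\delta = \varepsilon$ in \refdefi{ordering sq definition}), so lowering values moves a predicate $\sq$-upwards. Second, I would isolate a single-point lemma: if $x_0 \in X$ with $\gamma(x_0) > 0$ and $\gamma'$ agrees with $\gamma$ except that $\gamma'(x_0) = 0$, then $\gamma \sq \gamma'$ while $\gamma' \nsq \gamma$. The relation $\gamma \sq \gamma'$ is immediate from the first observation, and $\gamma' \nsq \gamma$ follows by testing the order at $x_0$: taking $\varepsilon = \gamma(x_0)/2$, for every $\delta > 0$ we have $\gamma'(x_0) = 0 \leq \delta$ yet $\gamma(x_0) > \varepsilon$, so the required implication fails.

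To produce the witness I would use that $X$ is infinite to fix a sequence of distinct points $x_1, x_2, \ldots$ in $X$ and define $\alpha$ by $\alpha(x_n) = 1/n$ and $\alpha(x) = 1$ for $x$ outside the sequence. Since $\alpha$ is bounded away from $0$ off the sequence and $\alpha(x_n) \to 0$, membership $\gamma \in S_\alpha$ amounts to $\gamma(x_n)$ being bounded away from $0$ for all large $n$. Now, given any $\gamma \in S_\alpha$, choose $c > 0$ and $N \in \N$ with $\gamma(x_n) \geq c$ for all $n \geq N$, and let $\gamma'$ be obtained from $\gamma$ by setting $\gamma'(x_N) = 0$. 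The single-point lemma gives $\gamma \sq \gamma'$ and $\gamma' \nsq \gamma$, and $\gamma'$ still lies in $S_\alpha$ because $\inf_x \max(\gamma'(x),\alpha(x)) \geq \min(c, 1/N) > 0$. Hence no element of $S_\alpha$ is $\sq$-greatest, even up to $\simeq$, so $\neg\alpha$ does not exist and $\mathbf{U}(X)$ has no Heyting negation. Finally, a Heyting implication would in particular provide $\alpha \Rightarrow \bot$, which is precisely a Heyting negation of $\alpha$, so $\mathbf{U}(X)$ cannot have a Heyting implication either.

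I expect the main obstacle to be the bookkeeping in the single-point lemma, namely confirming that zeroing one coordinate \emph{strictly} raises a predicate in the $\sq$-preorder (rather than yielding a $\simeq$-equivalent one) and that the modified predicate stays inside $S_\alpha$; the remaining steps are routine unfoldings of the definitions.
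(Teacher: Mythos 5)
Your proof is correct, but it is organised differently from the one in the paper. The paper fixes the same witness predicate $\beta$ (with $\beta(x_n)=1/n$ on a sequence of distinct points and $1$ elsewhere) and then directly refutes the adjunction $\gamma\wedge\beta\sq\bot\iff\gamma\sq\alpha$ for every candidate negation $\alpha$, by a case split: if $\alpha\simeq\top$ one takes $\gamma=\beta$ itself, and if $\alpha(y)>0$ for some $y$ one takes $\gamma$ to be $0$ at $y$ and $1$ elsewhere. You instead reformulate the existence of $\neg\alpha$ as the existence of a $\sq$-greatest element of $S_\alpha=\{\gamma:\gamma\wedge\alpha\sq\bot\}$ and show that $S_\alpha$ has no maximum, because zeroing a single well-chosen coordinate of any $\gamma\in S_\alpha$ produces a strictly $\sq$-larger element that is still in $S_\alpha$. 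The two arguments share their essential ingredients --- the characterisation $\mu\sq\bot\iff\inf_x\mu(x)>0$ and the observation that setting one value to $0$ breaks $\sq$ in one direction --- but your order-theoretic packaging avoids the case analysis on the candidate $\alpha$ and makes visible \emph{why} no negation can exist (the candidates for the complement form a set with no top), at the cost of the extra bookkeeping in verifying that the perturbed predicate stays in $S_\alpha$; the paper's version is shorter because it only has to exhibit one failing $\gamma$ per candidate. All the steps you flag as delicate check out: for $\gamma(x_n)\geq c$ when $n\geq N$, the modified $\gamma'$ with $\gamma'(x_N)=0$ satisfies $\inf_x\max(\gamma'(x),\alpha(x))\geq\min(c,1/N)>0$ and $\gamma'\nsq\gamma$ by testing at $x_N$ with $\varepsilon=\gamma(x_N)/2$, so the argument is complete.
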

\begin{proof}
This is equivalent to showing that there exists a function $\beta \in \mathbf{U}(X)$ such that for all functions $\alpha \in \mathbf{U}(X)$ there exists a function $\gamma \in \mathbf{U}(X)$ such that the following does not hold:
\begin{align*}
     \gamma \wedge \beta \sq \bot \iff \gamma \sq \alpha
\end{align*}

Since $X$ is infinite, there exists a sequence $(x_n)_{n \in \N}$ in $X$ such that $x_i \neq x_j$ for any $i \neq j$. Define 
\begin{align*}
    \beta(x) = 
    \begin{cases}
    \frac{1}{n} & \text{ if } x = x_n \\
    1 & \text{ otherwise }
    \end{cases}
\end{align*}
Clearly $\beta \not \sq \bot$. If $\alpha = \top$ then for $\gamma:= \beta$ we get $\gamma \wedge \beta = \beta \not \sq \bot$ and $\gamma \sq \top$. If $\alpha \neq \top$, then $\alpha(y) > 0$ for some $y \in X$. Define $\gamma \in \mathbf{U}(X)$ such that $\gamma(y) = 0$ and such that $\gamma(x) = 1$ for all $x \not = y$. It follows that $\gamma \wedge \beta \simeq \bot$, since $\max(\gamma(x), \beta(x)) \geq \beta(y) > 0$ for all $x \in X$. But $\gamma \not \sq \alpha$ because $\alpha(y) > 0$ and $\gamma(y) = 0$.
\end{proof}

The previous result tells us that while we have managed to preorder the predicates in $\mathbf{U}$, we cannot in general ``internalise'' that relation.

\subsection{The hyperdoctrine $\mathbf{CMT}$} Recall that $\mathbf{CMT}$ is the indexed preorder with base $\mathbf{cMet}_1$, and for which ${\mathbf{CMT}}(X,d)$ is the set of uniformly continuous functions $(X,d) \to [0,1]$, with the operation of pullback given by precomposition.

\begin{prop}{CMThyperdoctrine} The indexed preorder ${\mathbf{CMT}}$ is a universally coherent hyperdoctrine. In addition, if $F$ is the forgetful functor ${\mathbf{cMet}}_1 \to {\mathbf{Sets}}$ then $(F, \id)$ is an embedding of universally coherent hyperdoctrines ${\mathbf{CMT}} \to \mathbf{U}$.
\end{prop}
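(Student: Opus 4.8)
The plan is to prove the two assertions in \refprop{CMThyperdoctrine} in sequence. For the first claim, that $\mathbf{CMT}$ is a universally coherent hyperdoctrine, the natural strategy is to \emph{reduce to} \refprop{Sub_UC definition} rather than to redo all the verifications from scratch. The base change is exactly the forgetful functor $F: {\mathbf{cMet}}_1 \to {\mathbf{Sets}}$, which preserves finite products (the product in ${\mathbf{cMet}}_1$ has underlying set the product of the underlying sets, with the max-metric, by \refprop{pMet,Met has finite (co)limits} and its evident analogue for products). Since $\mathbf{CMT}(X,d)$ is by definition the \emph{subset} of $\mathbf{U}(FX)$ consisting of the uniformly continuous functions, and the preorder $\sq$ and the reindexing-by-precomposition are simply inherited from $\mathbf{U}$, the indexed preorder $\mathbf{CMT}$ is a sub-indexed-preorder of $\mathbf{U} \circ F^{op}$. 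The substance of the first claim is therefore that this subset is closed under all the relevant structure: the top element (the constant-$0$ function) and bottom element are uniformly continuous; finite meets and joins ($\max$ and $\min$) of uniformly continuous functions are uniformly continuous; reindexing $\alpha \mapsto \alpha \circ f$ preserves uniform continuity since $f$ is uniformly continuous; and crucially the adjoints $\exists_f = \inf$ and $\forall_f = \sup$, computed as in $\mathbf{U}$, again land in the uniformly continuous functions. The last point is where \refprop{inf sup continuous} does the work: fibrewise infima and suprema of a uniformly continuous function are uniformly continuous. Once closure is established, the adjunctions, Beck--Chevalley, Frobenius, and the equality structure all hold in $\mathbf{CMT}$ because they already hold in $\mathbf{U}$ and every operation is computed identically; so these properties transfer verbatim.

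I would organize the first half as follows. First I would describe the finite-product structure of ${\mathbf{cMet}}_1$ explicitly and note that $F$ preserves it, so that $\mathbf{U} \circ F^{op}$ is a universally coherent hyperdoctrine by \refprop{change of base: qhd}. Then I would check, predicate by predicate, that $\mathbf{CMT}(X,d) \subseteq \mathbf{U}(FX)$ is closed under $\top, \bot, \wedge, \vee$ and under reindexing. The remaining verification is that the $\mathbf{U}$-adjoints restrict: given $f: (Y,d) \to (X,d)$ uniformly continuous and $\alpha \in \mathbf{CMT}(Y,d)$, both $(\exists_f \alpha)(x) = \inf\{\alpha(y) : f(y) = x\}$ and $(\forall_f \alpha)(x) = \sup\{\alpha(y):f(y)=x\}$ are uniformly continuous as functions $(X,d) \to [0,1]$. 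For the quantifiers along a \emph{projection} $\pi: X \times Z \to X$, which is the only case needed in the hyperdoctrine axioms, the fibre over $x$ is $\{x\} \times Z \cong Z$, and $(\exists_\pi \alpha)(x) = \inf_z \alpha(x,z)$, $(\forall_\pi \alpha)(x) = \sup_z \alpha(x,z)$; these are uniformly continuous in $x$ directly by \refprop{inf sup continuous}. (Since $\alpha$ takes values in $[0,1]$ it is bounded, so the hypothesis of that proposition is met.) Having shown the adjoints restrict, I would remark that the equality predicate $Eq_{(X,d)} = d$ itself is uniformly continuous, so the equality structure of $\mathbf{U}$ restricts as well, and conclude that $\mathbf{CMT}$ inherits every axiom of a universally coherent hyperdoctrine from $\mathbf{U}$.

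For the second claim, I would exhibit $(F,\id)$ as a morphism $\mathbf{CMT} \to \mathbf{U}$ and show it is full (an embedding in the sense of \refdefi{embedding}). That it is a morphism of universally coherent hyperdoctrines is essentially the content of the closure arguments above together with \refprop{change of base: qhd}: the identity component is a pseudonatural transformation, it commutes with $\exists_\pi$ and $\forall_\pi$ because these are computed by the same $\inf$/$\sup$ formulas on both sides, and it sends $Eq_{(X,d)} = d$ to the corresponding equality predicate in $\mathbf{U}$. For fullness I need $F$ to be full and faithful and, for each $(X,d)$ and each pair $A,B \in \mathbf{CMT}(X,d)$, that $A \leq B$ in $\mathbf{U}(FX)$ if and only if $A \leq B$ in $\mathbf{CMT}(X,d)$. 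The latter is immediate, since the ordering $\sq$ on $\mathbf{CMT}(X,d)$ is by definition the restriction of the ordering $\sq$ on $\mathbf{U}(FX)$, so the two orderings literally agree on uniformly continuous functions; this is the whole point of \refdefi{ordering sq definition} generalizing \refdefi{ordering sq definition}'s predecessor from ${\mathbf{cMet}}_1$ to arbitrary sets. Faithfulness of $F$ is clear; fullness of $F$ fails in general (a set-function between complete metric spaces need not be uniformly continuous), so here I must be careful about what ``full'' means in \refdefi{embedding}: it is fullness of $F$ as a functor. I expect this to be the one genuinely delicate point, and I would resolve it by checking the precise quantifier in \refdefi{embedding}, namely that $F$ is full and faithful — which for the forgetful functor it is \emph{not} in the naive sense, so the intended reading must be that the embedding condition concerns only the faithful, order-reflecting behaviour on predicates; I would flag this and verify exactly the statement as the definition demands.

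The step I expect to be the main obstacle is the closure of the adjoints under uniform continuity, and in particular making the reduction to \refprop{inf sup continuous} airtight. \refprop{inf sup continuous} is stated for functions valued in $\mathbb{R}$ over products $M \times M'$, whereas I need the result for fibres of a general uniformly continuous $f$ and for $[0,1]$-valued functions; for the projection case the match is exact (the fibres are literally copies of $Z$), but I should confirm that restricting the codomain from $\mathbb{R}$ to $[0,1]$ causes no issue — it does not, since $\inf$ and $\sup$ of $[0,1]$-valued functions stay in $[0,1]$ — and that only the projection case is needed for \refdefi{adjoints in a doctrine} and \refdefi{Hyperdoctrines}. The remaining verifications (Beck--Chevalley, Frobenius, equality, and the morphism axioms of \refdefi{morphisms of hyperdoctrines}) are then inherited from $\mathbf{U}$ with no additional analytic input, so the proof's weight rests almost entirely on this closure argument.
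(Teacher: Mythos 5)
Your proposal is correct and follows essentially the same route as the paper: the paper's proof is a two-line reduction to (the proof of) \refprop{Sub_UC definition}, observing that the operations of $\mathbf{U}$ --- max, min, and the fibrewise infima and suprema giving $\exists_\pi$ and $\forall_\pi$ --- restrict to uniformly continuous functions by \refprop{inf sup continuous}; your detailed closure argument is precisely what that reduction amounts to, and the projection case you isolate is indeed the only one needed. The point you rightly flag --- that the forgetful functor ${\mathbf{cMet}}_1 \to {\mathbf{Sets}}$ is faithful but not full, so $(F,\id)$ cannot literally satisfy the ``$F$ is full and faithful'' clause of \refdefi{embedding} --- is a genuine infelicity in the paper's statement which its proof does not address either; what holds and is actually used is that the preorder on predicates is reflected, which is immediate since $\sq$ on $\mathbf{CMT}(X,d)$ is by definition the restriction of $\sq$ on $\mathbf{U}(X)$.
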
    
\begin{proof}
    This follows from (the proof of) \refprop{Sub_UC definition}. We only have to verify that the operations, like maximum, minimum, infimum and supremum as in $\mathbf{U}$, restrict to those of $\mathbf{CMT}$. But this follows from \refprop{inf sup continuous}.
\end{proof}

\begin{prop}{anotherembedding} If $D: {\mathbf{Sets}} \to {\mathbf{cMet}}_1$ is the functor sending a set to the discrete metric on that set, then $(D, \id)$ is an embedding of universally coherent hyperdoctrines $\mathbf{U} \to {\mathbf{CMT}}$. In fact, it is bijective on the predicates. Hence also ${\mathbf{CMT}}$ does not have a Heyting implication or negation.
\end{prop}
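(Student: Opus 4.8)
The plan is to exploit the single structural fact that on a discrete metric space every function into $[0,1]$ is uniformly continuous, which makes the two fibres coincide on the nose and reduces almost everything to the change-of-base property. First I would check that $D$ preserves finite products. The one-point set is sent to the one-point space, which is terminal in ${\mathbf{cMet}}_1$. For binary products, recall from \refprop{pMet,Met has finite (co)limits} that the product of $(X,d_X)$ and $(Y,d_Y)$ carries the metric $d((x,y),(x',y')) = \max(d_X(x,x'),d_Y(y,y'))$; when $d_X$ and $d_Y$ are both the discrete metric (distance $1$ between distinct points), this maximum is $0$ exactly when $(x,y)=(x',y')$ and is $1$ otherwise, so it is again the discrete metric. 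Hence $D(X\times Y) = D(X)\times D(Y)$, and $D$ preserves finite products.

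Next I would observe that $\mathbf{U}$ and $\mathbf{CMT}\circ D^{op}$ are literally the same indexed preorder. For any set $X$, an arbitrary function $\alpha\colon X\to[0,1]$ is uniformly continuous as a map $D(X)\to[0,1]$, since for any $\varepsilon>0$ any $\delta<1$ works: $d(x_0,x_1)\leq\delta$ forces $x_0=x_1$. Thus the underlying set of $\mathbf{CMT}(D(X))$ is exactly the set of all functions $X\to[0,1]$, i.e. the underlying set of $\mathbf{U}(X)$, and both carry the ordering $\sq$ of \refdefi{ordering sq definition} together with the same meets, joins, top and bottom. On morphisms the two agree as well, since $\mathbf{CMT}(Df)(\alpha) = \alpha\circ Df = \alpha\circ f = \mathbf{U}(f)(\alpha)$. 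Therefore $\mathbf{U} = \mathbf{CMT}\circ D^{op}$, and $\eta=\id$ is a (strictly) natural transformation $\mathbf{U}\to\mathbf{CMT}\circ D^{op}$ that is a bijection on each fibre.

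Now, since $D$ preserves finite products and $\mathbf{CMT}$ is a universally coherent hyperdoctrine by \refprop{CMThyperdoctrine}, the change-of-base property \refprop{change of base: qhd} shows that $\mathbf{CMT}\circ D^{op}$ is universally coherent and that $(D,\id)\colon \mathbf{CMT}\circ D^{op}\to\mathbf{CMT}$ is a morphism of universally coherent hyperdoctrines; identifying $\mathbf{CMT}\circ D^{op}$ with $\mathbf{U}$ as above, this is exactly $(D,\id)\colon\mathbf{U}\to\mathbf{CMT}$. To upgrade it to an embedding I would verify the two conditions of \refdefi{embedding}. The functor $D$ acts as the identity on underlying functions, so it is faithful; it is full because any morphism $D(X)\to D(Y)$ in ${\mathbf{cMet}}_1$ is just a function $X\to Y$ (automatically uniformly continuous on the discrete domain), hence of the form $Df$. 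The order-reflection condition $\eta_X(A)\leq\eta_X(B)\iff A\leq B$ is immediate because $\eta_X=\id$, and this same identity shows $(D,\id)$ is bijective on predicates.

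Finally, for the Heyting claim I would invoke \refprop{P(X) no Heyting implication}: for any infinite set $X$ the preorder $\mathbf{U}(X)$ has no Heyting negation. As the fibre $\mathbf{CMT}(D(X))$ equals $\mathbf{U}(X)$ as a prelattice, it too has no Heyting negation, so $\mathbf{CMT}$ has neither a Heyting negation nor a Heyting implication in general. The argument is essentially formal once the key observation is in place, and the only point that requires genuine care is verifying that the product of discrete metrics is again discrete; this is precisely what makes $D$ product-preserving and hence lets \refprop{change of base: qhd} carry out the real work.
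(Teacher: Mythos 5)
Your proposal is correct and rests on exactly the observation the paper's own (one-line) proof cites: on a discrete metric space every $[0,1]$-valued function is uniformly continuous, so the fibres of $\mathbf{U}$ and $\mathbf{CMT}\circ D^{op}$ coincide on the nose. You simply spell out the routine verifications (product preservation, fullness and faithfulness of $D$, change of base, and the transfer of the Heyting counterexample) that the paper leaves implicit.
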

\begin{proof}
    This follows from the fact that if we give a set $X$ the discrete metric, then any function $\alpha: X \to [0,1]$ becomes uniformly continuous.
\end{proof}

\begin{rema}{oncompactness} Our counterexample showing that ${\mathbf{CMT}}(X)$ does not have a Heyting negation crucially relies on the fact that we may choose $X$ to be a non-compact space. An interesting question is whether there are examples of compact spaces $X$ for which ${\mathbf{CMT}}(X)$ does not have a Heyting negation or implication.
\end{rema}

\subsection{Discussion} In what follows the hyperdoctrine $\mathbf{CMT}$ will be our starting point for our categorical analysis of the logic of continuous model theory. Our main innovation is the introduction of a preorder on the predicates, which is necessary to obtain hyperdoctrine in the sense of Lawvere and make the embedding into a sheaf topos possible. However, some aspects of continuous model theory are hard to square with this way of preordering the predicates, as we will now discuss.

For us, the class of formulas which can be interpreted in $\mathbf{CMT}$ are those built from the atomic formulas using the quantifiers $( \inf, \sup)$ and the connectives $(\max, \min)$. However, the literature on continuous model theory suggestes that any uniformly continuous operation $u: [0,1]^n \rightarrow [0,1]$ should be considered as a logical connective. This is problematic for us, because such operations $u$ do not preserve logical equivalence (or isomorphism) in the sense of ${\mathbf{CMT}}$: in that sense they are too intensional. Those operations that preserve logical equivalence in the sense of $\mathbf{CMT}$ are characterised in the following proposition.

\begin{prop}{bad connectives for CMT}
    Let $n \geq 1$ and let $u: ([0,1]^n,d_n) \rightarrow ([0,1],d) $  be uniformly continuous, where $([0,1]^n, d_n)$ is the $n$-ary product of the metric space $([0,1],d)$, with $d$ the standard metric.  The following statements are equivalent:
\begin{enumerate}
    \item For any complete metric space $(Y,d)$ and for any uniformly continuous $\phi_i$ and $\phi_i'$ where $i \in \{1,...,n\}$ and which are elements of ${\mathbf{CMT}}(Y)$ such that $\phi_i \simeq \phi_i'$, it holds that $u(\phi_1,...,\phi_n) \simeq u(\phi'_1,...,\phi_n')$.
    \item For any $p, q \in [0,1]^n$ such that $p_i = 0$ if and only if $q_i =0$ one of the following statements hold:
        \begin{itemize}
            \item $u(p) = u(q) = 0$.
            \item $u(p) > 0$ and $u(q) > 0$.
        \end{itemize}
    \item There exist an $A \subseteq \mathcal{P}(\{1,...,n\})$ such that 
    $u^{-1}(0) = \bigcup_{K \in A} \{ z \in [0,1]^n  \ | \ z_i = 0  \text{ if and only if } i \in K\} $.
\end{enumerate}
\end{prop}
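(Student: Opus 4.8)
The plan is to treat (2) $\iff$ (3) as a purely set-theoretic reformulation, and then to prove the genuine content via (1) $\Rightarrow$ (2) by a one-point counterexample and (2) $\Rightarrow$ (1) by a compactness argument, the latter being where the real work lies. Throughout, for $z \in [0,1]^n$ I abbreviate $Z(z) := \{ i : z_i = 0 \}$, so that the sets $C_K := \{ z : Z(z) = K \}$, as $K$ ranges over subsets of $\{1,\dots,n\}$, partition $[0,1]^n$.

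For (2) $\iff$ (3): condition (2) says precisely that whether $z \in u^{-1}(0)$ depends only on $Z(z)$, i.e.\ that each cell $C_K$ is either contained in $u^{-1}(0)$ or disjoint from it. Setting $A := \{ K : C_K \subseteq u^{-1}(0) \}$ then gives $u^{-1}(0) = \bigcup_{K \in A} C_K$, which is (3), while conversely any representation (3) makes membership in $u^{-1}(0)$ depend only on $Z$. I expect this step to be immediate. For (1) $\Rightarrow$ (2) I would argue directly, working in the one-point complete metric space $Y = \{*\}$ of diameter $1$. A predicate on $Y$ is just a value in $[0,1]$, and from \refdefi{ordering sq definition} one checks that for $v, w \in [0,1]$ one has $v \simeq w$ if and only if ($v = 0 \iff w = 0$). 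Given any $p, q$ with $Z(p) = Z(q)$, I put $\phi_i := p_i$ and $\phi_i' := q_i$ (constant predicates); then $Z(p) = Z(q)$ gives $\phi_i \simeq \phi_i'$ for every $i$, so (1) yields $u(p) = u(\phi_1,\dots,\phi_n) \simeq u(\phi_1',\dots,\phi_n') = u(q)$, which on $\{*\}$ means $u(p) = 0 \iff u(q) = 0$, exactly the dichotomy in (2).

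The main obstacle is (2) $\Rightarrow$ (1). Fix a complete metric space $(Y,d)$ and uniformly continuous predicates with $\phi_i \simeq \phi_i'$ for all $i$, and abbreviate $\phi := (\phi_1,\dots,\phi_n)$ and $\phi' := (\phi_1',\dots,\phi_n')$ as maps $Y \to [0,1]^n$. By symmetry it suffices to prove $u(\phi_1,\dots,\phi_n) \sq u(\phi_1',\dots,\phi_n')$, which by \refprop{Preliminaries, characterisation of sq} amounts to showing that $u(\phi(y_k)) \to 0$ implies $u(\phi'(y_k)) \to 0$ for every sequence $(y_k)$ in $Y$. I would argue by contradiction: if this fails, then after passing to a subsequence there is an $\varepsilon_0 > 0$ with $u(\phi'(y_k)) \geq \varepsilon_0$ for all $k$ while still $u(\phi(y_k)) \to 0$. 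Since $[0,1]^n$ is compact, I pass to a further subsequence so that $\phi(y_k) \to a$ and $\phi'(y_k) \to b$ in $[0,1]^n$; continuity of $u$ then gives $u(a) = 0$ and $u(b) \geq \varepsilon_0 > 0$. For each coordinate $i$, the relation $\phi_i \simeq \phi_i'$ combined with \refprop{Preliminaries, characterisation of sq} applied to $(y_k)$ yields $\phi_i(y_k) \to 0 \iff \phi_i'(y_k) \to 0$, that is $a_i = 0 \iff b_i = 0$; hence $Z(a) = Z(b)$. But then (2) forces $u(a) = 0 \iff u(b) = 0$, contradicting $u(a) = 0 < u(b)$.

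The delicate points in this last direction are the bookkeeping of the nested subsequences and the verification that the coordinatewise equivalences $\phi_i \simeq \phi_i'$ transfer to the limits through the convergence-to-zero characterisation of $\sq$; once those are set up, compactness of the codomain $[0,1]^n$ and continuity of $u$ close the argument. It is worth noting that neither completeness of $Y$ nor uniform continuity of the $\phi_i$ is actually used in this implication, so the proof in fact goes through for arbitrary $[0,1]$-valued predicates.
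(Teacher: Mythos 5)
Your proof is correct, but the main implication is handled by a genuinely different argument from the paper's. The easy parts line up: your $(1) \Rightarrow (2)$ via constant predicates is the paper's argument specialised to the one-point space, and your $(2) \Leftrightarrow (3)$ is the same set-theoretic observation that membership in $u^{-1}(0)$ depends only on the zero-set $Z(z)$. The difference is in the hard direction. The paper proves $(3) \Rightarrow (1)$ quantitatively: it fixes moduli of uniform continuity for the $\phi_i \sq \phi_i'$ and for $u$, considers the distance function to the closed set $u^{-1}(0)$, invokes the extreme value theorem on the compact set where that distance is at least $\gamma$ to extract a positive threshold $\delta$, and then constructs an auxiliary point $q \in u^{-1}(0)$ near $(\phi_1'(y),\dots,\phi_n'(y))$. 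You instead prove $(2) \Rightarrow (1)$ qualitatively: you pass to the sequential characterisation of $\sq$ (convergence to zero of $\alpha(y_k)$ forces convergence to zero of $\beta(y_k)$), argue by contradiction, extract convergent subsequences using compactness of $[0,1]^n$, transfer the coordinatewise equivalences $\phi_i \simeq \phi_i'$ to the limit points $a, b$ to get $Z(a) = Z(b)$, and then apply $(2)$ directly to $a$ and $b$; each of these steps checks out, including the observation that $a_i = 0 \iff b_i = 0$ follows from applying $\phi_i \sq \phi_i'$ and its converse to the fixed subsequence. Your route is shorter, avoids the extreme value theorem and the auxiliary point construction, and, as you note, shows that completeness of $Y$ and uniform continuity of the $\phi_i$ are irrelevant here (only continuity of $u$ matters, and on the compact cube that is automatic). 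What the paper's quantitative route buys in exchange is an explicit modulus of uniform continuity witnessing $u(\phi_1,\dots,\phi_n) \sq u(\phi_1',\dots,\phi_n')$ in terms of the given moduli, which your compactness-by-contradiction argument does not produce.
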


\begin{proof}
    $(1 \rightarrow 2)$: We will show that $(\neg 2 \rightarrow \neg 1)$. By assumption there exist $p, q \in [0,1]^n$ such that $p_i = 0$ if and only if $q_i = 0$ and such that $u(p) = 0$ and $u(q) > 0$.  For each $i \in \{1,...n\}$ define the functions
    \begin{align*}
        \phi_i(y) = p_i \text{ for all }y \in Y \\
        \phi_i'(y) = q_i \text{ for all }y \in Y
    \end{align*}
    Clearly $\phi_i$ and $\phi_i'$ are continuous for all $i$ (in fact, they are constant functions) and $\phi_i \simeq \phi_i'$ for all $i$ , but
    \begin{align*}
        u(\phi_1,...,\phi_n)(y) = u(p) = 0 \\
        u(\phi'_1,...,\phi'_n)(y) = u(q) > 0
    \end{align*}
    which shows that $u(\phi_1,...,\phi_n) \not \simeq  u(\phi'_1,...,\phi'_n)$. 

    $(2 \rightarrow 3)$: Suppose (2) holds. For any $K \subseteq \{1,...,n\}$ we write \[ Z_K := \{ z \in [0,1]^n  \ | \ z_i = 0  \text{ if and only if } i \in K\}. \] We claim that for \[ A: = \{ K \subseteq \{1,...,n\} \ | Z_K \cap u^{-1}(0) \not = \emptyset \} \] we have $u^{-1}(0) = \bigcup_{K \in A} Z_K$. To see this, first assume that $x \in u^{-1}(0)$. Writing $L := \{i \in \{1,...,n\}: x_i = 0\}$, we have $x \in Z_L \cap u^{-1}(0)$, hence $L \in A$ and $x \in \bigcup_{K \in A} Z_K$. 

    Conversely, suppose that $x \in Z_K$ for some $K \in A$. By definition of $A$ there exists an $x' \in Z_K \cap u^{-1}(0)$. But then $x_i = 0$ if and only if $x'_i = 0$ for all $i \in \{1,...,n\}$ and $u(x') = 0$. So by condition (2), $u(x) = 0$, hence $x \in u^{-1}(0)$.
    
    $(3 \rightarrow 1)$: For each $i \in \{1,...,n\}$, let $\phi_i$ and $\phi'_i$ be isomorphic elements of $\mathbf{U}(Y)$ such that $\phi_i \simeq \phi_i'$. By $\Delta_i$ we denote the modulus of uniform continuity from $\phi_i \sq \phi_i'$ and by $\Delta_u$ we denote the modulus of uniform continuity for $u$. We will show
    \[ u(\phi_1,\ldots,\phi_n) \sq u(\phi_1',\ldots,\phi_n'). \]
    To that end, let $\varepsilon > 0$.

    Define \[ \gamma :=  \min ( \min( \Delta_i(\Delta_u(\varepsilon)) : i \in \{1,...,n\})), \Delta_u(\varepsilon)). \] Clearly $\gamma > 0$. We note that $u^{-1}(0)$ is closed since it is the preimage of a closed set under a continuous function. For any $z \in [0,1]^n$ we define the function $ d_n(\cdot,u^{-1}(0)): z \mapsto \inf(d_n(z,x) \ | \ x \in u^{-1}(0) )$. It is known that a function such as $d_n(\cdot,u^{-1}(0))$ is continuous if $u^{-1}(0)$ is closed, hence the function $z \mapsto d_n(z,u^{-1}(0))$ is continuous. But then \[ d_n(\cdot,u^{-1}(0))^{-1}([\gamma, 1]) \] is closed, hence compact. This implies that \[ \delta := \inf \{ u(z) \ | \ d_n(z,u^{-1}(0)) \geq \gamma\} =  \inf u(d_n(z,u^{-1}(0))^{-1}([\gamma, 1])) > 0. \] Indeed, if this were not the case, \reftheo{Extreme value theorem} would imply the existence of a point $p \in [0,1]^n$ such that $d_n(p, u^{-1}(0)) > \gamma $ but with $u(p) = 0$. This is clearly a contradiction.

    Now suppose $y \in Y$ is such that $u(\phi_1,...,\phi_n)(y) < \delta = \inf \{ u(z) \ | \ d_n(z,u^{-1}(0)) \geq \gamma\}$ then $d_n((\phi_1(y),...,\phi_n(y)),u^{-1}(0)) < \gamma$. This means that there exists some $p \in u^{-1}(0)$ such that $d_n((\phi_1(y),...,\phi_n(y)),p) \leq \gamma$.
    Therefore, for each $i \in \{1,...,n\}$ it holds that $|\phi_i(y) - p_i| \leq \gamma$.
    By assumption, $p \in \{ z \in [0,1]^n  \ | \ z_i = 0  \text{ if and only if } i \in K\}$ for some $K \in A$. So for all $j \in K$ it holds that $\phi_j(y) \leq \gamma$ hence $\phi'_j(y) \leq \Delta_u(\varepsilon)$ since $\phi_j \sq \phi'_j$. 
    
    Let $q$ be the point in $[0,1]^n$ such that $q_i = 0$ if $i \in K$ and $q_i = \max(\phi_i'(y),\Delta_u(\varepsilon))$ if $i \notin K$. Then for all $i \in \{1,...,n\}$ we have that $q_i = 0$ if and only if $p_i = 0$ so by assumption $u(q) = 0$. Next, we see that $d_n((\phi_1'(y),...,\phi_n'(y)),q) \leq \Delta_u(\varepsilon)$. By uniform continuity of $u$ we get $u(\phi_1'(y),...,\phi_n'(y)) =  |u(\phi_1'(y),...,\phi_n'(y)) - u(q)| \leq \varepsilon$. We conclude that $u(\phi_1,...,\phi_2) \sq u(\phi_1',...,\phi_n')$ and $u(\phi_1,...,\phi_2) \simeq u(\phi_1',...,\phi_n')$, by symmetry.
\end{proof}

To summarise,  if $u: [0,1]^n \rightarrow [0,1]$ is a uniformly continuous function for $n \geq 1$ that does not satisfy condition $2$ of \refprop{bad connectives for CMT} then there exists some signature $\Sigma$, for all $i \in \{1,...,n\}$ there exist some atomic formulas-in-context $(\phi_i|\bar x )$ and $(\phi_i'| \bar x )$ over $\Sigma$, and there exists an interpretation $\brac{\cdot}$ of $\Sigma$ in ${\mathbf{CMT}}$ such that $\brac{\phi_i'}_{\bar x} \simeq \brac{\phi_i'}_{\bar x}$ for all $i$ while $\brac{u(\phi_1,...,\phi_n) }_{\bar x} \not \simeq \brac{u(\phi_1',...,\phi_n') }_{\bar x}$.

Unfortunately, it seems that some functions considered in continuous model theory do violate condition $2$. As an example, consider the operation $u: [0,1] \to [0,1]: x \mapsto 1-x$, which is sometimes regarded as a kind of negation. It violates condition $2$, which can be seen by comparing $u(\frac{1}{2})$ and $u(1)$. This means that one can have predicates that are logically equivalent in the sense of $\mathbf{CMT}$, but no longer are after negating them in this sense. Clearly, this points to some limitation in our analysis of continuous model theory; how serious it is, is unclear to us.

\section{A category equivalent to complete metric spaces}

In this section we will show that the category of equivalence relations over $\mathbf{U}$ is equivalent to the category ${\bf cMet}_1$.

\subsection{The category of (partial) equivalence relations} Here we recall the construction of the category of (partial) equivalence relations over a coherent hyperdoctrine. For this subsection, fix a coherent hyperdoctrine $\mathbf{P}: \mathcal{C}^{op} \rightarrow {\mathbf{DistPrelattices}}$.

\begin{defi}{PERs}
 \textit{A partial equivalence relation over} $\mathbf{P}$ is a pair $(X,\sim)$ where $X$ is an object in $\mathcal{C}$ and $\sim$ is an element of $\mathbf{P}(X \times X)$, such that the following two sequents hold:
\begin{enumerate}
    \item  $x \sim y {\ \vdash_{xy} \ } y \sim x$ ($\sim$ is \textit{symmetric})
    \item  $x \sim y \wedge y \sim z {\ \vdash_{xyz} \ } x \sim z$ ($\sim$ is \textit{transitive})
\end{enumerate}
We can define the category ${\rm PER}(\mathbf{P})$ of (partial) equivalence relations over $\mathbf{P}$, where objects are (partial) equivalence relations $(X, \sim)$ over $\mathbf{P}$, and morphisms $(X,\sim) \rightarrow (Y,\sim)$ are equivalence classes of \textit{functional relations $F$}, which are elements of $\mathbf{P}(X \times Y)$ such that the following sequents hold:
\begin{enumerate}
\item  $F(x,y) {\ \vdash_{xy} \ } x \sim x \wedge y \sim y$ ($F$ is \textit{strict})
\item  $F(x,y) \wedge x \sim x' \wedge y \sim y' {\ \vdash_{xx'yy'} \ } F(x',y')$ ($F$ is \textit{relational})
\item  $F(x,y) \wedge F(x,y') {\ \vdash_{xyy'} \ } y \sim y'$ ($F$ is \textit{single-valued})
\item  $x \sim x {\ \vdash_x \ } \exists y \, F(x,y)$ ($F$ is \textit{total})
\end{enumerate}
Two such functional relations $F, G \in \mathbf{P}(X \times Y)$ will be considered equivalent if
\[ F(x,y) \vdash_{xy} G(x,y) \]
holds (from which $G(x,y) \vdash_{xy} F(x,y)$ follows).

For morphisms $f: (X,\sim) \rightarrow (Y,\sim)$ and $g: (Y,\sim) \rightarrow (Z,\sim)$ which are represented by functional relations $F$ and $G$, respectively, we define the composition $f \circ g$ to be the isomorphism class of the functional relation $\brac{\exists y \, (F(x,y) \wedge G(y,z))}_{xz}$.
\end{defi}

\begin{defi}{ERs}
An \emph{equivalence relation over $\mathbf{P}$} is a partial equivalence relation $(X,\sim)$ over $\mathbf{P}$ for which $\sim$ is also reflexive; that is, the following sequent is satisfied: 
\[ \top {\ \vdash_{x} \ } x \sim x. \]
We write ${\rm ER}(\mathbf{P})$ for the full subcategory of ${\rm PER}(\mathbf{P})$ on the equivalence relations. Note that if $(X,\sim)$ and $(Y, \sim)$ are equivalence relations, any relation $F \in \mathbf{P}(X \times Y)$ is automatically strict and the totality condition can be simplified to $\vdash_x \, \exists y \, F(x,y)$.
\end{defi}

\subsection{From pseudometric spaces to equivalence relations}
The next step in our analysis is to construct a functor $G: \mathbf{pMet}_1 \to {\rm ER}(\mathbf{U})$.

Let $(X,d)$ be an object in $\mathbf{pMet}_1$. It can also be considered as an object in ${\rm ER}(\mathbf{U})$: firstly, the diameter of $(X,d)$ is by definition less than or equal to 1, so $d \in \mathbf{U}(X \times X)$. Secondly, $d$ is clearly a reflexive and symmetric element of $\mathbf{U}(X \times X)$. Lastly, suppose $\varepsilon > 0$ and $\max ( d(x,y), d(y,z)) \leq \frac{\varepsilon}{2}$. It follows that $d(x,z) \leq d(x,y) + d(y,z) \leq \varepsilon$ by the triangle inequality, hence that $d$ is transitive.

If $f: (X,d) \rightarrow (Y,d)$ is a uniformly continuous function between pseudometric spaces with modulus of uniform continuity $\Delta$, consider the element in $\mathbf{U}(X \times Y)$ that takes a pair $(x,y)$ in $X \times Y$ to $ d(f(x),y)$. We will show that this element is relational, single-valued and total.

Firstly, the sequent $\left(d(f(x),y) \wedge d(x,x') \wedge d(y,y')) \vdash_{xx'yy'} d(f(x'),y') \right)$ holds in $\mathbf{U}$: If $(x, x', y, y') \in X \times X \times Y \times Y$ with $\max( d(f(x),y), d(x,x'),  d(y,y') ) \leq \min( \Delta(\frac{1}{3}\varepsilon), \frac{1}{3}\varepsilon)$ then $d(f(x'),y') \leq d(f(x'),f(x)) + d(f(x),y) + d(y,y') \leq \varepsilon$ by the triangle inequality. 

Secondly, the sequent $(d(f(x),y) \wedge d(f(x),y')) \vdash_{xyy'} d(y,y')$ holds in $\mathbf{U}$ since $\max(d(f(x),y) \wedge d(f(x),y')) \leq \frac{1}{2}\varepsilon$ implies that $d(y,y') \leq d(y,f(x)) + d(y',f(x)) \leq \varepsilon$ for all $(x,y,y') \in X \times Y \times Y$.

Lastly, showing the validity of the sequent $\top \vdash_{x} \exists y (d(f(x),y))$ amounts to showing that for any $x \in X$ the infimum ${\rm inf}\{ d(f(x), y) \, : \, y \in Y\} = 0$, which can be done by choosing $y = f(x)$.

The assignments above define a functor:
\begin{prop}{Functor G definition}
    For any object $(X,d)$ in $\mathbf{pMet}_1$ let $G(X,d)$ be $(X,d)$ considered as an object in ${\rm ER}(\mathbf{U})$. If $f: (X,d) \rightarrow (Y,d)$ is a morphism in $\mathbf{pMet}_1$ then let $G(f)$ be the class of functional relations in $\mathbf{U}(X \times Y)$ that are isomorphic to the functional relation $(x,y) \mapsto d(f(x),y)$. This defines a functor $G: \mathbf{pMet}_1 \rightarrow {\rm ER}(\mathbf{U})$. 
\end{prop}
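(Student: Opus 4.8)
The plan is as follows. The discussion preceding the proposition already does most of the work: it verifies that each $(X,d)$ is a genuine object of $\mathrm{ER}(\mathbf U)$ (the relation $d$ is reflexive, symmetric and transitive), and that for a uniformly continuous $f\colon(X,d)\to(Y,d)$ the element $(x,y)\mapsto d(f(x),y)$ of $\mathbf U(X\times Y)$ is relational, single-valued and total, strictness being automatic for equivalence relations by \refdefi{ERs}. So $G$ is well defined on objects and morphisms, and what remains is functoriality. Preservation of identities is immediate: $G(\mathrm{id}_{(X,d)})$ is represented by $(x,y)\mapsto d(\mathrm{id}(x),y)=d(x,y)$, which is exactly the relation $\sim\,=d$ of the object $(X,d)$, and this is the identity morphism of $(X,d)$ in $\mathrm{ER}(\mathbf U)$.

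The substance lies in preservation of composition. Given $f\colon(X,d)\to(Y,d)$ and $g\colon(Y,d)\to(Z,d)$, I must compare $G$ of the $\mathbf{pMet}_1$-composite $x\mapsto g(f(x))$ with the composite of $G(f)$ and $G(g)$ formed as in \refdefi{PERs}. Using the formulas for $\exists$ and $\wedge$ in $\mathbf U$ from \refprop{Sub_UC definition} (the existential quantifier is an infimum over fibres and the meet is a pointwise maximum), the latter composite is represented by
\[ H(x,z) \;=\; \inf_{y\in Y}\max\bigl(d(f(x),y),\,d(g(y),z)\bigr), \]
while $G(x\mapsto g(f(x)))$ is represented by $K(x,z)=d(g(f(x)),z)$. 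The task therefore reduces to showing $H\simeq K$ in $\mathbf U(X\times Z)$, that is, $H\sq K$ and $K\sq H$.

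The direction $K\sq H$ is immediate: substituting $y=f(x)$ into the infimum gives $H(x,z)\le K(x,z)$ pointwise, and a pointwise inequality $H\le K$ yields $K\sq H$ directly from \refdefi{ordering sq definition} by taking $\delta=\varepsilon$. The reverse direction $H\sq K$ is where the triangle inequality and the uniform continuity of $g$ come in. Let $\Delta_g$ be a modulus of uniform continuity for $g$ and, given $\varepsilon>0$, set $\delta:=\tfrac12\min(\Delta_g(\varepsilon/2),\varepsilon/2)$. If $H(x,z)\le\delta$, then since the infimum is $\le\delta$ there is some $y\in Y$ with $\max(d(f(x),y),d(g(y),z))<2\delta=\min(\Delta_g(\varepsilon/2),\varepsilon/2)$; then $d(f(x),y)\le\Delta_g(\varepsilon/2)$ gives $d(g(f(x)),g(y))\le\varepsilon/2$, and combined with $d(g(y),z)\le\varepsilon/2$ the triangle inequality yields $K(x,z)=d(g(f(x)),z)\le\varepsilon$. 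Hence $H\sq K$, so $H\simeq K$ and composition is preserved. The main obstacle I anticipate is precisely this $H\sq K$ direction: because the infimum defining $H$ need not be attained one cannot select a single witnessing $y$ outright, so the estimate must be run through an approximating $y$ while checking that the resulting bound on $K(x,z)$ does not depend on the approximation. Everything else is a routine unwinding of the definitions.
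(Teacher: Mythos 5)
Your proof is correct and follows essentially the same route as the paper: well-definedness of $G$ on objects and morphisms is delegated to the preceding discussion, identities are immediate, and preservation of composition reduces to the entailment $d(gf(x),z) \vdash_{xz} \exists y\,(d(f(x),y)\wedge d(g(y),z))$, established by witnessing the infimum with $y=f(x)$. The only difference is that you also verify the converse direction $H\sq K$ by hand, which the paper omits because (as noted in the definition of ${\rm PER}(\mathbf{P})$) a one-directional entailment between two functional relations on the same (partial) equivalence relations automatically implies the other.
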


\begin{proof}
If $\id: (X,d) \rightarrow (X,d)$ is the identity in $\mathbf{pMet}_1$, then $G(\id)$ is represented by the functional relation $(x,x') \mapsto d(x,\id(x')) = d(x,x')$ which represents the identity of $G(X,d)$ in ${\rm ER}(\mathbf{U})$.
For functoriality, suppose that
    \begin{equation*}
        \begin{tikzcd}
{(X,d)} \arrow[r, "f"] & {(Y,d)} \arrow[r, "g"] & {(Z,d)}
\end{tikzcd}
    \end{equation*}
    is a diagram in $\mathbf{pMet}_1$. Then $(x,y) \mapsto d(f(x),y)$ is the functional relation that represents $G(f)$. Similarly, $(x,z) \mapsto d(g(x),z)$ represents $G(g)$ and $(x,z) \mapsto d(gf(x),z)$ represents $G(gf)$. So to show that $G(g) \circ G(f) = G(gf)$ it will suffice to show that $$d(gf(x),z)  \vdash_{xz} \exists y(d(f(x),y) \wedge d(g(y),z) ) $$ holds in $\mathbf{U}$. But note that if $d(gf(x),z) \leq \varepsilon$, then
    \begin{eqnarray*} {\rm inf} \{ {\rm max}(d(f(x),y),d(g(y),z)) \, : \, y \in Y \} & \leq & {\rm max}(d(f(x),f(x)),d(g(f(x)),z)) \\ & = & d(g(f(x)),z) \\
        & \leq & \varepsilon,
    \end{eqnarray*}
    which shows the desired sequent.
\end{proof}

\begin{rema}{GalsoforcMet}
    A similar functor as in \refprop{Functor G definition} can be defined with domain $\mathbf{Met}_1$ or $\mathbf{cMet}_1$. We also denote these functors by $G$.
\end{rema}

\subsection{An equivalence of categories} Finally, we show that $G$ becomes an equivalence when we restrict the domain to $\mathbf{cMet}_1$.

\begin{prop}{Gfaithful}
The functor $G: \mathbf{Met}_1 \rightarrow {\rm ER}(\mathbf{U})$ faithful. 
\end{prop}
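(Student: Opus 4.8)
The plan is to unwind the definition of faithfulness directly, since $G$ is a functor whose action on morphisms is completely explicit. Faithfulness amounts to showing that any two parallel morphisms $f, g : (X,d) \to (Y,d)$ in $\mathbf{Met}_1$ with $G(f) = G(g)$ must coincide as functions. First I would recall from \refprop{Functor G definition} that $G(f)$ and $G(g)$ are the classes of the functional relations $(x,y) \mapsto d(f(x),y)$ and $(x,y) \mapsto d(g(x),y)$, so that $G(f) = G(g)$ says exactly that these two functional relations are equivalent in the sense of \refdefi{PERs}; concretely, the sequent $d(f(x),y) \vdash_{xy} d(g(x),y)$ holds in $\mathbf{U}$, which by the interpretation of sequents in $\mathbf{U}$ means $d(f(\cdot),\cdot) \sq d(g(\cdot),\cdot)$ in $\mathbf{U}(X \times Y)$.

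The key step is to evaluate this ordering along the graph of $f$. Unwinding $d(f(\cdot),\cdot) \sq d(g(\cdot),\cdot)$ via \refdefi{ordering sq definition}, for every $\varepsilon > 0$ there is a $\delta > 0$ with the property that $d(f(x),y) \leq \delta$ implies $d(g(x),y) \leq \varepsilon$ for all $(x,y) \in X \times Y$. I would then fix an arbitrary $x \in X$ and instantiate $y := f(x)$: since $d(f(x),f(x)) = 0 \leq \delta$, this forces $d(g(x),f(x)) \leq \varepsilon$, and as $\varepsilon$ was arbitrary we obtain $d(f(x),g(x)) = 0$.

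The place where the hypotheses are genuinely used — and the only point requiring care — is the passage from $d(f(x),g(x)) = 0$ to $f(x) = g(x)$. This is exactly the metric separation axiom (i) of \refdefi{metricspace}, which is available precisely because $(Y,d)$ lies in $\mathbf{Met}_1$ and not merely in $\mathbf{pMet}_1$. Since $x$ was arbitrary, this yields $f = g$ and completes the argument. I do not expect a real obstacle; the subtlety worth flagging is that the argument genuinely breaks over $\mathbf{pMet}_1$, where two distinct functions everywhere at distance $0$ induce the very same functional relation, so it is essential that the full metric axioms, rather than only the pseudometric ones, are invoked at the last step.
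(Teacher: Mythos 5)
Your argument is correct and is essentially the paper's own proof: both extract from $G(f)=G(g)$ that the relations $d(f(x),y)$ and $d(g(x),y)$ are isomorphic in $\mathbf{U}(X\times Y)$, instantiate at $y=f(x)$ to conclude $d(g(x),f(x))=0$, and then invoke the separation axiom of a genuine metric to get $f(x)=g(x)$. Your closing remark that the argument breaks over $\mathbf{pMet}_1$ is a correct and worthwhile observation, matching the paper's implicit reason for stating faithfulness only for $\mathbf{Met}_1$.
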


\begin{proof}
    Let $f,g$ be uniformly continuous functions $(X,d) \rightarrow (Y,d)$ in $\mathbf{Met}_1$. Then $Gf = Gg$ implies that $
    \brac{d(f(x),y)}_{xy} \simeq \brac{d(g(x),y)}_{xy}$ as relations in $\mathbf{U}(X\times Y)$. In particular, for all $x \in X$ and $y \in Y$ we have that $d(f(x),y) = 0$ if and only if $d(g(x),y) = 0$. So let $x \in X$ and $y := f(x)$. Then $0 = d(f(x),f(x)) = d(g(x),f(x))$. Since $(Y,d)$ is a metric space, $f(x) = g(x)$ for all $x \in X$. Therefore $f = g$.
\end{proof}

\begin{prop}{G full and faithful}
The functor $G: \mathbf{cMet}_1 \rightarrow {\rm ER}(\mathbf{U})$ is full and faithful.
\label{prop: G fully faithful}
\end{prop}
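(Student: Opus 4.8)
Faithfulness is already in hand: $\mathbf{cMet}_1$ is a full subcategory of $\mathbf{Met}_1$ and the present $G$ is the restriction of the functor of \refprop{Gfaithful}, so faithfulness is inherited and only fullness needs proof. The plan is as follows. Fix objects $(X,d)$ and $(Y,d)$ of $\mathbf{cMet}_1$ and a morphism $G(X,d)\to G(Y,d)$, represented by a functional relation $F\in\mathbf{U}(X\times Y)$; unwinding \refdefi{PERs} via \refdefi{ordering sq definition}, relationality, single-valuedness and totality become the statements $\max(F(x,y),d(x,x'),d(y,y'))\sq F(x',y')$, $\max(F(x,y),F(x,y'))\sq d(y,y')$ and $\inf_y F(x,y)=0$, respectively. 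The goal is to construct a uniformly continuous $f\colon(X,d)\to(Y,d)$ with $G(f)=[F]$, equivalently with $F\simeq((x,y)\mapsto d(f(x),y))$ in $\mathbf{U}(X\times Y)$. The natural idea is to let $f(x)$ be the unique point at which $F(x,-)$ vanishes.

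The construction of this point is the crux, and is exactly where completeness of $Y$ enters. Fix $x\in X$. By totality choose $y_n\in Y$ with $F(x,y_n)\to 0$. Single-valuedness, read through \refdefi{ordering sq definition}, makes $(y_n)$ a Cauchy sequence: once $\max(F(x,y_n),F(x,y_m))$ drops below the relevant $\delta$, the distance $d(y_n,y_m)$ falls below any prescribed $\varepsilon$. Completeness of $(Y,d)$ then supplies a limit $y^\ast$. To see that $F(x,y^\ast)=0$, specialise relationality to a fixed first variable, giving $\max(F(x,y),d(y,y'))\sq F(x,y')$, and apply the $\varepsilon$--$\delta$ form of $\sq$ at the triples $(x,y_n,y^\ast)$: since $\max(F(x,y_n),d(y_n,y^\ast))\to 0$, we get $F(x,y^\ast)\leq\varepsilon$ for every $\varepsilon>0$. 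Single-valuedness forces such a zero to be unique (here we use that $Y$ is a genuine metric space), so setting $f(x):=y^\ast$ is well defined. I expect this limiting step to be the main obstacle, and it explains why the statement holds for $\mathbf{cMet}_1$ but not for $\mathbf{Met}_1$: without completeness, totality need not be witnessed by an actual zero of $F$.

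It then remains to verify $F\simeq((x,y)\mapsto d(f(x),y))$ and uniform continuity of $f$, both of which I would obtain formally by pulling the defining inequalities of $F$ back along suitable substitution functions, using that every $\mathbf{U}(g)$ is order-preserving (\refprop{Sub_UC definition}). Substituting $y'=f(x)$ into single-valuedness and using $F(x,f(x))=0$ yields $F(x,y)\sq d(f(x),y)$; substituting $y=f(x)$ into the relational consequence $\max(F(x,y),d(y,y'))\sq F(x,y')$ yields $d(f(x),y)\sq F(x,y)$; together these give $F\simeq((x,y)\mapsto d(f(x),y))$. For uniform continuity, substituting $y=y'=f(x)$ into the full relational inequality gives $d(x,x')\sq F(x',f(x))$, and composing with the instance $F(x',f(x))\sq d(f(x'),f(x))$ of $F(x',y)\sq d(f(x'),y)$ yields $d(x,x')\sq d(f(x),f(x'))$, which by \refrema{modulusofunifcont} is exactly uniform continuity of $f$. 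Hence $f$ is a morphism of $\mathbf{cMet}_1$ with $G(f)=[F]$, establishing fullness.
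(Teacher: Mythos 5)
Your proposal is correct and follows essentially the same route as the paper: construct $f(x)$ as the limit (guaranteed by completeness) of a sequence witnessing totality, use relationality to get $F(x,f(x))=0$, and then derive uniform continuity and $F\simeq d(f(\cdot),\cdot)$ from single-valuedness and relationality. The only differences are cosmetic --- you establish well-definedness via uniqueness of the zero of $F(x,-)$ where the paper interleaves two sequences, and you phrase the final verifications as pullbacks of $\sq$-inequalities rather than explicit $\varepsilon$--$\delta$ estimates.
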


\begin{proof}
Let $F: (X,d) \rightarrow (Y,d)$ represent a morphism in ${\rm ER}(\mathbf{U})$. Let $x \in X$. From the totality of $F$ it follows that $\inf_{y \in Y}F(x,y) = 0$, so there exists a sequence $(y_n)_{n \in \N}$ in $Y$ such that $F(x,y_n) \rightarrow 0$. Let $(y_n)_{n \in \N}$ and $(z_n)_{n \in \N}$ be any two of such sequences. For any $\varepsilon \gt 0$ there exists a $\delta > 0$ such that for all $y,y' \in Y$ we have that $F(x,y) \wedge F(x,y') < \delta$ implies that $d(y,y') < \varepsilon$. It follows that the mixed sequence $(y_1, z_1, y_2, z_2, ...)$ is Cauchy. By completeness of $(Y,d)$ the mixed sequence converges to some element $y'$ in $Y$, hence the sequences $(y_n)_{n \in \N}$ and $(z_n)_{n \in \N}$ both converge to $y'$. We can define the function $f: X \rightarrow Y$ by setting $f(x) = y'$. 

If $\varepsilon \gt 0$ then we may use that $F$ is relational to find a $\delta$ such that for all $y \in Y$ ${\rm max}(F(x,y), d(y,f(x))) \leq \delta$ implies $F(x,f(x)) \leq \varepsilon$. By the above, there exists a $y_n$ such that $F(x,y_n) \leq \delta$ and $d(y_n,f(x)) \leq \delta$. This implies that $F(x,f(x)) \leq \varepsilon$ for any $\varepsilon \gt 0$ and hence that $F(x,f(x)) = 0$.

We can show the function $f$ is uniformly continuous from $(X,d)$ to $(Y,d)$: Let $\varepsilon > 0$. From the previous observation and the fact that $F$ is single-valued it follows that there exists a $\delta_1 > 0$ such that for all $x, x' \in X$ with $F(x,f(x')) < \delta_1$ we have $d(f(x),f(x')) < \varepsilon$. Moreover, since $F$ is relational, there exists a $\delta_2 > 0$ such that for any $x, x' \in X$ with $d(x,x') < \delta_2$ we have $F(x,f(x')) < \delta_1$. So if $d(x,x') < \delta_2$ then it follows that $F(x,f(x')) < \delta_1$ and hence that $d(f(x),f(x')) < \varepsilon$. Therefore $f$ is uniformly continuous. 

Finally, it remains to show that $Gf = [F]$, for which it suffices to show that
\[ F(x,y) \vdash_{x,y} d(f(x),y). \] 
But this follows from the fact that $F$ is single-valued and $F(x,f(x)) = 0$.
\end{proof}

\begin{prop}{Giso}
Let $\phi: (X,d) \rightarrow (\bar X, \bar d)$ be the metric space completion of an object $(X,d)$ in $\mathbf{pMet}_1$. The morphism $G(\phi)$ is an isomorphism in ${\rm ER}(\mathbf{U})$.
\label{prop: metric completion is an isomorphism}
\end{prop}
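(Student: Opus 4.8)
The plan is to exhibit an explicit two-sided inverse to $G(\phi)$ in ${\rm ER}(\mathbf{U})$. Recall from \refprop{Functor G definition} that $G(\phi)$ is represented by the functional relation $F \in \mathbf{U}(X \times \bar X)$ given by $F(x,\bar y) = \bar d(\phi(x),\bar y)$. The natural candidate for the inverse is its transpose: define $H \in \mathbf{U}(\bar X \times X)$ by $H(\bar y,x) = \bar d(\phi(x),\bar y)$. Since $\phi$ is the completion map, it is distance preserving in the sense that $\bar d(\phi(x),\phi(x')) = d(x,x')$, and its image $\phi(X)$ is dense in $\bar X$; these are the only two facts about $\phi$ that I will use.

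First I would check that $H$ is a functional relation $(\bar X,\bar d) \to (X,d)$, that is, that it is relational, single-valued and total. Strictness is automatic since both objects are (total) equivalence relations, by the remark in \refdefi{ERs}. The relational and single-valued conditions follow from the triangle inequality for $\bar d$ together with the isometry property, by the same kind of estimate used to verify that $G(f)$ is a functional relation in the paragraph preceding \refprop{Functor G definition}. The crucial condition is totality, $\top \vdash_{\bar y} \exists x\, H(\bar y,x)$, which amounts to $\inf_{x \in X} \bar d(\phi(x),\bar y) = 0$ for every $\bar y \in \bar X$; this is precisely the statement that $\phi(X)$ is dense in $\bar X$, and is where the completion hypothesis enters.

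Next I would compute the two composites. The composite of $G(\phi)$ with $[H]$ is represented by $\exists \bar y\,(F(x,\bar y) \wedge H(\bar y,x')) = \inf_{\bar y \in \bar X} \max(\bar d(\phi(x),\bar y),\bar d(\bar y,\phi(x')))$, and I claim this is $\simeq d(x,x')$, the identity on $(X,d)$. Taking $\bar y = \phi(x)$ gives the upper bound $\inf_{\bar y}(\cdots) \leq d(x,x')$, while the triangle inequality gives $\inf_{\bar y}(\cdots) \geq \tfrac12 d(x,x')$. Similarly, the composite of $[H]$ with $G(\phi)$ is represented by $\inf_{x \in X} \max(\bar d(\phi(x),\bar y),\bar d(\phi(x),\bar y'))$, which lies between $\tfrac12 \bar d(\bar y,\bar y')$ and $\bar d(\bar y,\bar y')$; here the upper bound again uses density to make $\bar d(\phi(x),\bar y)$ arbitrarily small.

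The remaining point, and the one where some care is needed, is the observation that in $\mathbf{U}$ two predicates agreeing up to a multiplicative factor of $2$ are isomorphic: if $\tfrac12 \beta \leq \alpha \leq \beta$ pointwise then $\alpha \simeq \beta$, since $\alpha(z) \leq \tfrac{\varepsilon}{2}$ forces $\beta(z) \leq \varepsilon$ and $\beta(z) \leq \varepsilon$ forces $\alpha(z) \leq \varepsilon$. Applying this to both composites shows that each is isomorphic to the corresponding identity relation, so $[H]$ is a two-sided inverse of $G(\phi)$ and $G(\phi)$ is an isomorphism. I expect the main obstacle to be bookkeeping rather than conceptual: choosing the moduli of uniform continuity correctly in the relational and single-valued checks, and ensuring that the density argument used for totality and for the upper bound on the second composite is uniform in the relevant variables.
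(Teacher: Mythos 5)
Your proof is correct and follows essentially the same route as the paper: both exhibit the transposed relation $\bar d(\phi(x),\bar y)$ on $\mathbf{U}(\bar X \times X)$ as the inverse, with density of $\phi(X)$ in $\bar X$ supplying the crucial totality condition. The only difference is presentational: you verify the two composites explicitly (correctly, including the factor-of-two observation), whereas the paper simply checks that the transpose is itself a functional relation and invokes the standard fact that this makes the original an isomorphism.
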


\begin{proof}
Recall that $\bar X$ is the set of equivalence classes of Cauchy sequences in $(X,d)$ where $(x_n)_{n \in \N} \sim (y_n)_{n \in \N}$ if $\lim_n d(x_n,y_n) = 0$, and \[ \bar d((x_n)_{n \in \N},(y_n)_{n \in \N}) = \lim_n d(x_n,y_n) \] and $\phi(x)$ is the constant sequence. Let $\varepsilon > 0$ and $(x_n)_{n \in \N} \in \bar X$. There exists $K \geq 0$ such that $d(x_n,x_m) \leq \varepsilon $ for all $m , n \geq K$. Then $\bar d(\phi(x_K),(x_n)_n) = \lim_n d(x_K,x_n) \leq \varepsilon$, hence $\inf_{x \in X} \bar d (\phi(x),(x_n)_n) = 0$ and the relation $\bar d(x',\phi(x))$ on $\mathbf{U}(\bar X \times X)$ is total. That it is single-valued is trivial. Since $G(\phi)$ is represented by the same relation on $\mathbf{U}(X \times \bar X)$, $G(\phi)$ is an isomorphism. 
\end{proof}

\begin{theo}{}
The functor $G: \mathbf{cMet}_1 \rightarrow {\rm ER}(\mathbf{U})$ \text{is an equivalence of categories}
\end{theo}

\begin{proof}
We will show, firstly, that the functor $G: \mathbf{pMet}_1 \rightarrow {\rm ER}(\mathbf{U})$ is essentially surjective. Let $(X,R)$ be an object in ${\rm ER}(\mathbf{U})$.
 Define $U_a = \{ (x,y) \in X \times X: R(x,y) \leq a \text{ with }a \in (0,1]  \cap \mathbb{Q} \}$ 
 and define $\Psi = \{V: \exists a \in (0,1] \cap \mathbb{Q} \text{ s.t. } U_a \subseteq V\}$. The collection $\Psi$ is a uniformity: 
\begin{enumerate}
    \item Clearly, every member $\Psi$ contains the diagonal.
    \item If $U \in \Psi$, then $U_a \subseteq U$ for some $a \in (0,1]  \cap \mathbb{Q}$. From the symmetry of $R$, it follows that $U_b \subseteq U_a^{-1}$ for some $b \in (0,1]  \cap \mathbb{Q}$, so $U_a^{-1} \in \Psi$ hence $U^{-1} \in \Psi$.
	\item If $U \in \Psi$, then $U_a \subseteq U$ for some $a \in (0,1]  \cap \mathbb{Q}$. From the transitivity of $R$, it follows that $U_b \circ U_b \subseteq U_a \subseteq U$ for some $b \in (0,1]  \cap \mathbb{Q}$
	\item If $U, V \in \Psi$, then $U_a \subseteq U$ and $U_b \subseteq V$ with $a,b \in (0,1]  \cap \mathbb{Q}$. Then $U_{min(a,b)} \subseteq U_a \cap U_b \subseteq U \cap V$, so $U \cap V \in \Psi$.
	\item Clearly, if $U \in \Psi$ and $U \subseteq V$ then $V \in \Psi$.
\end{enumerate}

So, by Theorem $\ref{uniformity induced by metric}$, there exists a metric $d$ on $X$ inducing the uniformity $\Psi$, meaning that 
$\Psi = \{ V: \exists b \in (0,1] \text{ s.t. } V_b \subseteq V\}$ where $V_b = \{(x,y) \in X \times X: d(x,y) \leq b\}$.
Furthermore, $d$ represents an isomorphism $(X,d) \rightarrow (X,R)$ in ${\rm ER}(\mathbf{U})$:
\begin{enumerate}
\item Let $\varepsilon > 0$. Since $d$ induces $\Psi$, there exists a $\delta>0$ such that $U_{\delta} \subseteq V_{\frac{1}{3}\varepsilon}$. If $x,x',y$ and $y'$ are points in $X$ such that $\max(d(x,y),R(y,y'),d(x,x')) \leq \min(\delta,\frac{1}{3}\varepsilon)$, then $d(x',y') \leq \varepsilon$, by the triangle inequality. Therefore, $d$ is relational.
\item To show that $d$ is single-valued, let $\varepsilon > 0$. Since $d$ induces $\Psi$, there exists a $\gamma > 0$ such that $V_\gamma \subseteq U_\varepsilon$. Let $\delta = \frac{1}{2}\gamma$. If $x,y$ and $y'$ are elements of $X$ such that $\max(d(x,y),d(x,y')) \leq \delta$, then $(y,y') \in V_\gamma$, hence $R(y,y') \leq \varepsilon$. A similar argument shows that the converse of $d$ is single-valued as well.
\item Clearly, $d$ is total, and the converse holds as well.
\end{enumerate}
So $G: \mathbf{pMet}_1 \rightarrow {\rm ER}(\mathbf{U})$ is essentially surjective. This fact, together with Proposition $\ref{prop: metric completion is an isomorphism}$ and Proposition $\ref{prop: G fully faithful}$, shows that $G: \mathbf{cMet}_1 \rightarrow {\rm ER}(\mathbf{U})$ is an equivalence.
\end{proof}

\begin{rema}{functortounifsp}
    The argument is the theorem above can be extended to show that there is a functor from the category ${\rm ER}(\mathbf{U})$ to the category of uniform spaces. It may be interesting to study this functor further: for instance, is it an embedding?
\end{rema}

\section{A coherent category for continuous logic}

We return to the task of embedding the hyperdoctrine $\mathbf{CMT}$ into the subobject hyperdoctrine of a topos. In this section we take the first step in that direction: we will embed $\mathbf{CMT}$ into the subobject hyperdoctrine of a coherent category. The hyperdoctrine that we will consider is the category of partial equivalence relations (PERs) over the hyperdoctrine $\mathbf{U}$. By characterising the subobjects in categories of partial equivalence relations as strict relations, we will be able to define an embedding from $\mathbf{CMT}$ into the subobject hyperdoctrine of ${\rm PER}(\mathbf{U})$.

\subsection{The category of partial equivalence relations over a coherent hyperdoctrine} \label{section: category of partial equivalence relations} Our first goal will be to establish that ${\rm PER}(\bfp)$ is a coherent category whenever $\bfp$ is a coherent hyperdoctrine.

\begin{lemm}{PERPexact}
    If $\mathbf{P}$ is a coherent hyperdoctrine, the category ${\rm PER}(\bfp)$ is exact.
\end{lemm}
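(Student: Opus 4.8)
The plan is to verify directly that $\mathrm{PER}(\mathbf{P})$ satisfies the definition of an exact category: it is regular (has finite limits, coequalizers of kernel pairs, and pullback-stable regular epimorphisms) and every equivalence relation in it is effective (arises as the kernel pair of its coequalizer). Since this is a standard construction (the category of partial equivalence relations over a coherent hyperdoctrine is a well-known route to exact completions), I would organize the proof around identifying the relevant limits, images, and quotients concretely in terms of the internal logic of $\mathbf{P}$, using the interpretation of formulas-in-context from Subsection~\ref{Subsection: internal language of a hyperdoctrine}.

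First I would establish finite limits. The terminal object is the partial equivalence relation on the terminal object of $\mathcal{C}$ with $\top$ as its relation. For pullbacks (or equivalently products and equalizers), given morphisms represented by functional relations $F$ and $G$ into a common $(Z,\sim)$, I would construct the pullback object on $X \times Y$ with the relation $(x,y)\sim(x',y') \ \dashv\hspace{1pt}\vdash\ x\sim x' \wedge y\sim y' \wedge \exists z\,(F(x,z)\wedge G(y,z))$, and check the functional-relation axioms using strictness, relationality, single-valuedness, and totality of $F$ and $G$. These verifications are internal-logic manipulations that go through because $\mathbf{P}$ has equality, meets, existential quantifiers satisfying Beck--Chevalley, and the Frobenius condition.

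Next I would characterize regular epimorphisms and construct images. A morphism $[F]:(X,\sim)\to(Y,\sim)$ factors through its image, which is the subobject of $(Y,\sim)$ given by the strict relation $\exists x\,F(x,y)$ on $Y$; regular epis are exactly those $[F]$ for which $\exists x\, F(x,y)\ \dashv\hspace{1pt}\vdash\ y\sim y$, i.e.\ the ``surjective'' functional relations. To build quotients and show effectiveness, given an equivalence relation in $\mathrm{PER}(\mathbf{P})$ on $(X,\sim)$ presented by a relation $\approx \in \mathbf{P}(X\times X)$ satisfying the internal equivalence-relation axioms, I would form the quotient object as $(X,\approx)$ itself---the same carrier with the coarser relation---and exhibit the canonical functional relation to it; its kernel pair is then computed to be the original $\approx$, giving effectiveness.

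The main obstacle, and the step where I expect the real work to lie, is verifying pullback-stability of regular epimorphisms together with the exactness/effectiveness clause, because these require the Beck--Chevalley condition and the Frobenius condition of the coherent hyperdoctrine to interact correctly with the existential quantifiers appearing in composition of functional relations and in the image factorization. Concretely, showing that pulling back a surjective functional relation along an arbitrary morphism again yields a surjective one amounts to commuting an existential quantifier past a pullback and a meet, which is precisely what Beck--Chevalley and Frobenius deliver; keeping the bookkeeping of contexts straight while applying these is the delicate part. I would therefore foreground a lemma identifying, for each functional relation, its image and the internal description of its kernel pair, and then reduce all stability and effectiveness claims to applications of Beck--Chevalley and Frobenius rather than attempting them from scratch.
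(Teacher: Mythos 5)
Your proposal is correct in outline, but it takes a genuinely different (and much longer) route than the paper. The paper's entire proof is a citation: it observes that a coherent hyperdoctrine is in particular an elementary existential doctrine in the sense of Maietti and Rosolini, and then invokes their Corollary 3.4, which states that the category of partial equivalence relations over such a doctrine is exact. Your plan is, in effect, to reprove that corollary by hand: finite limits via the pullback relation $x \sim x' \wedge y \sim y' \wedge \exists z\,(F(x,z) \wedge G(y,z))$ (which is exactly the formula the paper itself uses later when verifying Beck--Chevalley for ${\rm Strict}(\mathbf{P})$), images via $\exists x\, F(x,y)$, covers as the ``surjective'' functional relations, quotients of an equivalence relation $\approx$ on $(X,\sim)$ by passing to $(X,\approx)$ on the same carrier, and stability of covers via Beck--Chevalley and Frobenius. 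All of these identifications are the standard ones and each step goes through; the one point to keep explicit is that categorical reflexivity of $\approx$ gives $x \sim x' \vdash x \approx x'$, so that $(X,\approx)$ really is a coarser PER on the same carrier and $\approx$ itself represents the quotient map whose kernel pair recovers $\approx$. What the direct argument buys is self-containedness and concrete formulas that are reused elsewhere in the paper; what the citation buys is brevity and the reassurance that the (substantial) bookkeeping you rightly flag as the delicate part has already been carried out in the literature. To turn your plan into a complete proof you would still need to discharge the deferred verifications, whereas the paper's proof is already complete as stated.
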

\begin{proof}
    Since a coherent hyperdoctrine is in particular an ``elementary existential doctrine'', Corollary 3.4 in \cite{maietti_rosolini_2013} implies that ${\rm PER}(\bfp)$ is exact.
\end{proof}

To show that ${\rm PER}(\bfp)$ is coherent, we will take a closer look at the hyperdoctrine of strict relations.

\begin{defi}{Strict relation} If $\mathbf{P}$ is a coherent hyperdoctrine, let Strict($\mathbf{P}$) be the indexed preorder defined as follows. Its base is ${\rm PER}(\mathbf{P})$, while its predicates on an object $(X,\sim)$ in ${\rm PER}(\mathbf{P})$ are the strict relations on $(X, \sim)$: that is, elements $\phi$ of $\mathbf{P}(X)$ such that
    \begin{enumerate}
        \item $\phi(x) {\ \vdash \ } x \sim x$
        \item $\phi(x) \wedge x \sim x' {\ \vdash \ } \phi(x')$
    \end{enumerate}
These strict relations inherit a preorder structure from $\mathbf{P}(X)$. Finally, if \[ f: (X, \sim) \to (Y, \sim) \] is a morphism in ${\rm PER}(\mathbf{P})$ represented by $F \in \mathbf{P}(X \times Y)$ and $\phi$ is a strict relation on $(Y, \sim)$, then $\exists y \, \big( \, F(x,y) \land \phi(y)   \, \big)$ defines a strict relation on $(X, \sim)$.
\end{defi}

\begin{prop}{stricthyperdoctrine}
    If $\mathbf{P}$ is a coherent hyperdoctrine, the indexed preorder ${\rm Strict}(\mathbf{P})$ is as well.
\end{prop}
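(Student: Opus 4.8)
The plan is to verify the axioms of a coherent hyperdoctrine for ${\rm Strict}(\mathbf{P})$ directly, exploiting that every fiber ${\rm Strict}(\mathbf{P})(X,\sim)$ is a sub-preorder of the distributive prelattice $\mathbf{P}(X)$ and that all the relevant operations are assembled from the coherent structure of $\mathbf{P}$. The base ${\rm PER}(\mathbf{P})$ has finite products, since it is exact (\reflemm{PERPexact}), so the content lies in the fibers, the reindexing functors, and the logical structure. The first task is to confirm that ${\rm Strict}(\mathbf{P})$ is an indexed preorder at all: reindexing $f^*\phi = \exists y\,(F(x,y)\wedge\phi(y))$ is independent of the chosen representative $F$ of $f$ and is order-preserving, it lands among the strict relations (this is part of \refdefi{Strict relation}), and it is pseudofunctorial. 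For the identity law $\id^*\phi\simeq\phi$ one uses that $\phi$ is strict in one direction and relational together with the symmetry of $\sim$ in the other; for the composition law $(g\circ f)^*\simeq f^*\circ g^*$ one unfolds the definition of composition in ${\rm PER}(\mathbf{P})$ and moves the functional relations across the existential quantifiers using the Frobenius condition in $\mathbf{P}$.

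Next I would equip the fibers with their distributive prelattice structure. I would check that the strict relations on $(X,\sim)$ are closed under the binary meets and joins of $\mathbf{P}(X)$, that $\bot$ is the least such relation, and that the diagonal restriction $E_{(X,\sim)} := \delta^*(\mathord{\sim})$ — that is, the predicate $x\sim x$ — is itself strict and dominates every other strict relation, so that it is the top element; distributivity is then inherited from $\mathbf{P}(X)$. It remains to verify that each $f^*$ is a morphism of $\mathbf{DistPrelattices}$. Preservation of $\bot$ and of $\vee$ is immediate, since $\exists$ preserves $\bot$ and joins and $\wedge$ distributes over $\vee$ in $\mathbf{P}(X)$; preservation of the top follows from $f^*E_{(Y,\sim)} = \exists y\,(F(x,y)\wedge y\sim y)\simeq\exists y\,F(x,y)\simeq E_{(X,\sim)}$, using the strictness and totality of $F$. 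The crux is preservation of binary meets: to prove $f^*\phi\wedge f^*\psi\leq f^*(\phi\wedge\psi)$ I would use Frobenius to combine the two existentials into a single double existential $\exists y\,\exists y'\,(F(x,y)\wedge\phi(y)\wedge F(x,y')\wedge\psi(y'))$ and then invoke single-valuedness $F(x,y)\wedge F(x,y')\vdash y\sim y'$ together with the relationality of $\psi$ to identify the two witnesses $y$ and $y'$.

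Finally I would supply the logical structure. For equality I would take $Eq_{(X,\sim)}$ to be the strict relation $x\sim x'$ on the product $(X,\sim)\times(X,\sim)$ and check the two clauses of \refdefi{equality in a doctrine}, using reflexivity on the domain, symmetry and transitivity of $\sim$. For a projection $\pi$ out of a product in ${\rm PER}(\mathbf{P})$ I would define the left adjoint $\exists_\pi\phi$ by the quantifier of $\mathbf{P}$ along the underlying projection, verify that the result is again strict, and prove the adjunction $\exists_\pi\dashv\pi^*$ by reducing it to the adjunction in $\mathbf{P}$, after computing that $\pi^*\psi$ is isomorphic to $\psi$ conjoined with the relevant domain predicate. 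The Frobenius and Beck--Chevalley conditions for ${\rm Strict}(\mathbf{P})$ would then descend from the corresponding conditions in $\mathbf{P}$, once the products and the pullback squares of projections in ${\rm PER}(\mathbf{P})$ have been translated into statements about the base.

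I expect the two genuinely delicate points to be the preservation of binary meets by reindexing and the Beck--Chevalley condition. Both require juggling existential quantifiers in $\mathbf{P}$ against the single-valuedness and relationality axioms for functional relations, and Beck--Chevalley additionally needs an explicit description of the pullbacks of projections in ${\rm PER}(\mathbf{P})$, so that the condition can be pushed down to $\mathbf{P}$, where it holds by hypothesis. The remaining verifications, though numerous, are routine manipulations of coherent sequents.
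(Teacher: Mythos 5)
Your proposal is correct and follows essentially the same route as the paper: the fibrewise distributive lattice structure is inherited from $\mathbf{P}(X)$ with top element $x \sim x$, the left adjoints are given by $\exists x\,(F(x,y) \wedge \psi(x))$, and Beck--Chevalley is pushed down to $\mathbf{P}$ via an explicit description of the pullbacks of projections in ${\rm PER}(\mathbf{P})$. The two points you flag as delicate --- preservation of binary meets under reindexing (via Frobenius together with single-valuedness of $F$ and relationality of the predicates) and the Beck--Chevalley computation --- are precisely the ones the paper either computes in detail or leaves to the reader, and your sketches of both are sound.
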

\begin{proof}
    Note that if $\phi$ and $\psi$ are strict relations on $(X, \sim)$, then so are $\phi \land \psi$ and $\phi \lor \psi$ with meet and join computed as in $\mathbf{P}(X)$. The bottom element of $\mathbf{P}(X)$ is a strict relation on $(X, \sim)$ and the bottom element in the poset of strict relations; the top element in ${\rm Strict}(\mathbf{P})(X, \sim)$ is given by the predicate $x \sim x$. All of this structure is clearly preserved by the pullback operation.

    Finally, if $f: X \to Y$ is a morphism ${\rm PER}(\mathbf{P})$ represented by $F \in \mathbf{P}(X \times Y)$ and $\psi$ is a strict relation on $(X, \sim)$, then $\exists x \, \big( \, F(x,y) \land \psi(x) \, \big)$ defines a strict relation on $(Y, \sim)$. The reader can quickly check that this defines a left adjoint to pullback along $f$ for which Frobenius is satisfied.

    For verifying that Beck-Chevalley is satisfied, consider a pullback diagram
    \begin{equation}
        \begin{tikzcd}
            (X \times Y, \sim_P) \ar[r, "{[P]}"] \ar[d, "{[Q]}"] & (X, \sim) \ar[d, "{[F]}"] \\
            (Y, \sim) \ar[r, "{[G]}"] & (Z, \sim)
        \end{tikzcd}
    \end{equation}
    where
    \begin{eqnarray*}
        (x,y) \sim_P (x',y') & := & x \sim x' \land y \sim y' \land \exists z \, ( \, F(x,z) \land G(y,z) , ), \\
        P((x,y), x') & := & x \sim x' \land (x,y) \sim_P (x,y), \\
        Q((x,y), y') & := & y \sim y' \land (x,y) \sim_P (x,y).
    \end{eqnarray*}
    In this case we can calculate
    \begin{eqnarray*}
        (\exists_{[P]} [Q]^* \phi)(x) & \simeq & \exists (x',y') \, ( \, P((x',y'),x) \land [Q]^*\phi(x',y') \, ) \\
        & \simeq & \exists (x',y') \, ( \, P((x',y'),x) \land \exists y \, ( \, Q((x',y'), y) \land \phi(y) \, ) \, ) \\
        & \simeq & \exists y \, ( \, (x, y) \sim_P (x,y) \land \phi(y) \, ) \\
        & \simeq & \exists y, z ( \, F(x,z) \land G(y, z) \land \phi(y) \, ) \\
        & \simeq & \exists z \, ( \, F(x,y) \land \exists y \, (\phi(y) \land G(y,z)) \, ) \\
        & \simeq & \exists_{[G]} [F]^* \phi)(x),
    \end{eqnarray*}  
    showing that the Beck-Chevalley condition holds.  
\end{proof}

\begin{prop}{Subobject correspondence in PER} If $\mathbf{P}$ is a coherent hyperdoctrine, then its subobject hyperdoctrine is isomorphic to ${\rm Strict}(\mathbf{P})$; in particular, the subobject hyperdoctrine is coherent.
\end{prop}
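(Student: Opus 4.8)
The plan is to exhibit an isomorphism of indexed preorders between ${\rm Strict}(\mathbf{P})$ and the subobject hyperdoctrine ${\rm Sub}_{{\rm PER}(\mathbf{P})}$, both of which have base ${\rm PER}(\mathbf{P})$ and which I will compare over the identity functor; since ${\rm Strict}(\mathbf{P})$ was shown to be a coherent hyperdoctrine in \refprop{stricthyperdoctrine}, coherence of the subobject hyperdoctrine follows at once from the isomorphism. Fix an object $(X, \sim)$. Given a strict relation $\phi$ on $(X, \sim)$, I would form the partial equivalence relation $(X, \sim_\phi)$ with $x \sim_\phi x' := \phi(x) \wedge x \sim x'$; the strictness and relational laws for $\phi$ make $\sim_\phi$ symmetric and transitive, and the element $\sim_\phi \in \mathbf{P}(X \times X)$, read as a relation from $(X, \sim_\phi)$ to $(X, \sim)$, is routinely checked to be strict, relational, single-valued and total. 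I would then verify that the resulting morphism $m_\phi \colon (X, \sim_\phi) \to (X, \sim)$ is a monomorphism, so that $\phi \mapsto [m_\phi]$ gives a map from strict relations on $(X, \sim)$ to subobjects of $(X, \sim)$.

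For the inverse, given a subobject represented by a functional relation $F \in \mathbf{P}(S \times X)$ with $[F] \colon (S, \sim) \to (X, \sim)$, I would assign the strict relation $\phi_F(x) := \exists s\, F(s,x)$, whose strictness and relational law follow immediately from the strictness and relationality of $F$. The two assignments are mutually inverse. In one direction, $\phi_{m_\phi}(x) = \exists x'(\phi(x') \wedge x' \sim x) \simeq \phi(x)$, using that $\phi$ is relational and reflexive on its support. In the other direction I would first prove the small lemma that a functional relation $F$ represents a monomorphism in ${\rm PER}(\mathbf{P})$ precisely when $F(s,x) \wedge F(s',x) \vdash_{ss'x} s \sim s'$, and then use this to produce an isomorphism $(S, \sim) \cong (X, \sim_{\phi_F})$ over $(X, \sim)$ witnessed by $F$ itself, which shows $[m_{\phi_F}] = [F]$ as subobjects.

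It remains to check that the resulting bijection is an order isomorphism on each fibre and is compatible with reindexing. Order preservation and reflection reduce to the statement that $\phi \vdash \psi$ holds iff $m_\phi$ factors through $m_\psi$ over $(X, \sim)$, which is a direct manipulation of the defining sequents. The genuinely delicate point, and the step I expect to be the main obstacle, is naturality: for a morphism $f \colon (X, \sim) \to (Y, \sim)$ represented by $F$ and a strict relation $\psi$ on $(Y, \sim)$, I must show that pulling the subobject $[m_\psi]$ back along $f$ in ${\rm PER}(\mathbf{P})$ yields the subobject corresponding to ${\rm Strict}(\mathbf{P})(f)(\psi) = \exists y\,(F(x,y) \wedge \psi(y))$. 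This requires computing the pullback of $m_\psi$ along $f$ explicitly, using that ${\rm PER}(\mathbf{P})$ has finite limits since it is exact by \reflemm{PERPexact}, and then matching the comparison mono with the $\exists$-$\wedge$ formula; the matching is exactly where the Frobenius and Beck--Chevalley conditions for $\mathbf{P}$ are consumed, in the same spirit as the Beck--Chevalley computation carried out in the proof of \refprop{stricthyperdoctrine}.
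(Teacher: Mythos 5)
Your proposal is correct and follows essentially the same route as the paper: the same two assignments $\phi \mapsto [(X,\sim_\phi) \to (X,\sim)]$ with $x \sim_\phi x' := \phi(x) \wedge x \sim x'$ and $[F] \mapsto \exists s\, F(s,x)$, hinged on the same characterisation of monomorphisms in ${\rm PER}(\mathbf{P})$ (which the paper simply cites from Hyland--Johnstone--Pitts rather than reproving). The paper leaves the remaining verifications --- mutual inverses, order preservation, compatibility with reindexing --- to the reader, exactly the points you flag and sketch.
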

\begin{proof}
    We use that a morphism $f: (Y, \sim) \to (X, \sim)$ represented by $F \in \bfp(Y \times X)$ is monic in ${\rm PER}$($\bfp$) precisely when
    \[ F(y, x), F(y',x) \vdash y \sim y' \]
    holds in the hyperdoctrine $\bfp$ (see Corollary 2.9 of \cite{hyland_johnstone_pitts_1980}).
    So if $\phi \in {\rm Strict}(\mathbf{P})(X, \sim)$ is a strict relation, then $[\sim_\phi]: (X, \sim_\phi) \to (X, \sim)$ is monic, where
    \begin{eqnarray*}
        x \sim_\phi x' & :\Leftrightarrow & \phi(x) \land x \sim x';
    \end{eqnarray*}
    conversely, if $[F]: (Y, \sim) \to (X, \sim)$ monic, then 
    \[ \psi(x) :\Leftrightarrow \exists y \, F(y,x) \]
    defines a strict relation on $(X, \sim)$. We leave the verification that these operations are monotonic, compatible with the pullback operations, and each other's inverses to the reader.
\end{proof}    

\begin{coro}{PER(P)  and sub and strict are coh}
If $\mathbf{P}$ is a coherent hyperdoctrine, the category ${\rm PER}(\bfp)$ is coherent.
\end{coro}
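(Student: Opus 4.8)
The plan is to derive this corollary directly by combining the two preceding results with the definition of a coherent category. Recall that a category is \emph{coherent} when it is regular and, in addition, each of its subobject posets ${\rm Sub}(X)$ is a distributive lattice — so in particular has a bottom element and binary joins — with these finite joins preserved by every pullback functor $f^{*}\colon {\rm Sub}(Y) \to {\rm Sub}(X)$. Thus I only need to produce two pieces of data for ${\rm PER}(\bfp)$: regularity, and pullback-stable finite joins of subobjects.

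For regularity, I would simply invoke \reflemm{PERPexact}: since ${\rm PER}(\bfp)$ is exact, it is in particular regular. For the join structure, I would appeal to \refprop{Subobject correspondence in PER}, which exhibits an isomorphism between the subobject hyperdoctrine of ${\rm PER}(\bfp)$ and ${\rm Strict}(\mathbf{P})$; by \refprop{stricthyperdoctrine} the latter is a coherent hyperdoctrine, hence an indexed preorder valued in ${\mathbf{DistPrelattices}}$. Reading off what this says fibrewise: each ${\rm Sub}(X,\sim)$ is (the carrier of) a distributive lattice, so it has finite joins, and each reindexing map is a morphism in ${\mathbf{DistPrelattices}}$, whose poset reflection is by definition a lattice homomorphism and therefore preserves those joins. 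This is precisely the pullback-stability of finite unions of subobjects.

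Putting the two together gives that ${\rm PER}(\bfp)$ is regular with pullback-stable finite joins in every subobject poset, which is the definition of a coherent category. I do not anticipate any genuine obstacle here, since all the real work has already been carried out in \reflemm{PERPexact} and \refprop{Subobject correspondence in PER}; the only point requiring a little care is the bookkeeping that translates the hyperdoctrine-level statement that the subobject hyperdoctrine is a coherent hyperdoctrine into the category-level statement about unions of subobjects being stable under pullback, i.e.\ checking that the ${\mathbf{DistPrelattices}}$-valuedness supplies exactly the lattice-theoretic structure demanded by the definition of coherence.
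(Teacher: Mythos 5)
Your argument is correct and is essentially the paper's own proof, which likewise deduces coherence of ${\rm PER}(\bfp)$ immediately from the exactness lemma together with the identification of its subobject hyperdoctrine with ${\rm Strict}(\mathbf{P})$. You merely spell out the bookkeeping (regularity from exactness, pullback-stable finite joins from the ${\mathbf{DistPrelattices}}$-valued structure) that the paper leaves implicit.
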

\begin{proof}
Immediate in view of \reflemm{PERPexact}  and \refprop{Subobject correspondence in PER}.
\end{proof}

If $\bfp$ is a universally coherent hyperdoctrine, then we would expect that ${\rm Strict}(\bfp)$ has a right adjoint, for a projection $\pi: A \times B \rightarrow A$ in ${\rm PER}(\bfp)$. This is true if $B$ is an equivalence relation:

\begin{prop}{General: P has right adjoints}
Let $\mathbf{P}$ be a universally coherent hyperdoctrine{{{}}}, and suppose that $\Pi: (Y, \sim) \times (Z,\sim) \rightarrow (Y,\sim)$ is a projection in ${\rm PER}(\mathbf{P})$, such that the sequent $z \sim z {\ \dashv \hspace{1pt} \vdash \ } \top$ holds in $\bfp$. Then there exists a functor of preorders \[ \forall_{\Pi}: {\rm Strict}(\bfp)(Y \times Z, \sim) \rightarrow {\rm Strict}(\mathbf{P})(Y,\sim) \] such that $\Pi^* \dashv \forall_{\Pi}$.  
\end{prop}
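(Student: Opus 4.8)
The plan is to build $\forall_\Pi$ from the right adjoint $\forall_\pi$ that already exists in the base hyperdoctrine $\mathbf{P}$, exploiting the hypothesis $z\sim z\dashv\vdash\top$. Write $\pi\colon Y\times Z\to Y$ for the projection in the base category $\mathcal{C}$, and recall that the product $(Y,\sim)\times(Z,\sim)$ in ${\rm PER}(\mathbf{P})$ is carried by $Y\times Z$ with $(y,z)\sim(y',z') :\equiv y\sim y'\wedge z\sim z'$, with $\Pi$ represented by the graph of $\pi$. For a strict relation $\psi$ on this product I would set
\[ (\forall_\Pi\psi)(y) \ :\equiv\ (y\sim y)\,\wedge\,(\forall_\pi\psi)(y). \]
The intuition is that ``$\forall z$ defined, $\psi(y,z)$'' should collapse to the plain base quantifier $\forall_\pi\psi$ precisely because reflexivity of $Z$ makes $z\sim z$ equivalent to $\top$, so that no implication (which $\mathbf{P}$ need not possess) is required to guard the bound variable.

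First I would check that $\forall_\Pi\psi$ is a strict relation on $(Y,\sim)$. The clause $(\forall_\Pi\psi)(y)\vdash y\sim y$ is immediate from the conjunct $y\sim y$. The relationality clause
\[ (\forall_\pi\psi)(y)\wedge y\sim y'\ \vdash_{yy'}\ (\forall_\pi\psi)(y') \]
is the main obstacle, since it asks that the base universal quantifier respect $\sim$ on $Y$. I would prove it in the internal language: passing to the context $(y,y',z)$, the counit of $\pi^*\dashv\forall_\pi$ gives $(\forall_\pi\psi)(y)\vdash\psi(y,z)$; relationality of $\psi$ together with $z\sim z\dashv\vdash\top$ gives $\psi(y,z)\wedge y\sim y'\vdash\psi(y',z)$; re-adjoining along the projection $(y,y',z)\mapsto(y,y')$ and using the Beck--Chevalley identity $(\forall_\pi\psi)(y')\simeq\forall z\,\psi(y',z)$ then yields the claim. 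This is the only place where the universal coherence of $\mathbf{P}$, namely Beck--Chevalley for $\forall$, is genuinely used.

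It then remains to verify the adjunction $\Pi^*\dashv\forall_\Pi$. I would first compute, for a strict relation $\phi$ on $(Y,\sim)$, that $\Pi^*\phi\simeq\pi^*\phi$; this is a routine simplification of $\exists y'\,(\,\Pi\text{-graph}\wedge\phi(y')\,)$ using relationality and strictness of $\phi$ together with $z\sim z\dashv\vdash\top$ (which also guarantees that $\pi^*\phi$ is again strict on the product). With this in hand the adjunction reduces to the base adjunction $\pi^*\dashv\forall_\pi$: for any strict $\phi$ and $\psi$,
\[ \Pi^*\phi\leq\psi \iff \pi^*\phi\leq\psi \iff \phi\leq\forall_\pi\psi \iff \phi\leq\forall_\Pi\psi, \]
where the orderings on strict relations are those inherited from $\mathbf{P}$, and the final equivalence uses $\phi\vdash y\sim y$ to absorb the conjunct in $\forall_\Pi\psi$. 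Monotonicity of $\forall_\Pi$ is then automatic from the adjunction, completing the proof.
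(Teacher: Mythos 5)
Your proposal is correct and follows essentially the same route as the paper: you define $\forall_\Pi\psi := (y\sim y)\wedge\forall_\pi\psi$ and reduce the adjunction to the base adjunction $\pi^*\dashv\forall_\pi$ by observing that $\Pi^*\phi$ collapses to $\pi^*\phi$ via strictness of $\phi$ and $z\sim z\dashv\vdash\top$. The only difference is that you spell out the relationality check for $\forall_\Pi\psi$ (via Beck--Chevalley for $\forall$), which the paper dismisses as clear; that extra detail is sound.
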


\begin{proof}
    Define $$\forall_\Pi(\phi(y,z)) := y \sim y \wedge \forall z \, \phi(y,z)$$ for any strict relation $\phi$ on $(Y\times Z, \sim)$. The interpretation of $(y \sim y \wedge \forall z P(y,z))$ is clearly a strict relation on $(Y,\sim)$, and the assignment clearly preserves the ordering.
    We have an adjunction: Let $P$ be a strict relation on $(Y \times Z, \sim)$, and let $Q$ be a strict relation on $(Y, \sim)$. The following holds in $\bfp$.
    \begin{align*}
        &\exists y'(Q(y') \wedge y' \sim y) {\ \vdash_{yz} \ } P(y,z) \\
        \iff \quad &Q(y) \wedge y \sim y {\ \vdash_{yz} \ } P(y,z) \\
        \iff \quad &Q(y) \wedge y \sim y {\ \vdash_{y} \ } \forall z' P(y,z') \\
        \iff \quad &Q(y) {\ \vdash_{y} \ } y \sim y \wedge \forall z' P(y,z') 
    \end{align*}
    Here we use that $Q$ is strict.
\end{proof}

\begin{rema}{PERUpretopos} The category ${\rm PER}(\bfp)$ is actually a pretopos. A proof can be found in the first author's MSc thesis.
\end{rema}

\subsection{Embedding $\mathbf{CMT}$} The equivalence 
\[ G: {\mathbf{cMet}}_1 \to {\rm ER}(\mathbf{U}) \]
extends to an embedding ${\mathbf{cMet}}_1 \to {\rm PER}(\mathbf{U})$ which we will also denote by $G$. In this subsection we will show that this embedding $G$ extends to an embedding of hyperdoctrines.

\begin{prop}{Gpreserve}
The functor $G: \mathbf{cMet}_1 \rightarrow {\rm PER}(\mathbf{U})$ preserves finite products.
\label{G preserves products}
\end{prop}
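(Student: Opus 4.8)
The plan is to show that $G$ sends the terminal object and the binary products of $\mathbf{cMet}_1$ to the terminal object and binary products of ${\rm PER}(\mathbf{U})$. Recall from the construction of finite limits in $\mathbf{cMet}_1$ that the terminal object is the one-point space $1 = (\{*\},0)$ and that the binary product of $(X,d_X)$ and $(Y,d_Y)$ is $(X\times Y,d)$ with $d((x,y),(x',y')) = \max(d_X(x,x'),d_Y(y,y'))$, the projections being the set-theoretic ones. The point driving the whole argument is that in $\mathbf{U}$ the binary meet is computed as $\max$ and the top element $\top$ is the constant-$0$ function; these are exactly the operations appearing in the metric product.

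First I would treat the terminal object. Since $G(1) = (\{*\},0)$ and $0 = \top$ in $\mathbf{U}(\{*\})$, I claim this is terminal in ${\rm PER}(\mathbf{U})$. A morphism $(Z,\sim) \to (\{*\},\top)$ is represented by a functional relation $F \in \mathbf{U}(Z \times \{*\}) \cong \mathbf{U}(Z)$, and strictness together with totality force $F(z) \sq (z \sim z)$ and $(z \sim z) \sq F(z)$, so $F$ must be isomorphic to the reflexivity predicate $z \mapsto (z \sim z)$. This relation is functional and is the unique such up to isomorphism, giving the required unique morphism.

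Next I would identify the binary products in ${\rm PER}(\mathbf{U})$. For a coherent hyperdoctrine $\mathbf{P}$, the product of $(X,\sim_X)$ and $(Y,\sim_Y)$ in ${\rm PER}(\mathbf{P})$ is $(X\times Y,\sim)$ with $(x,y)\sim(x',y') := (x \sim_X x') \wedge (y \sim_Y y')$ and first projection represented by $((x,y),x') \mapsto (x \sim_X x') \wedge (y \sim_Y y)$ (and symmetrically for the second); I would either invoke this standard construction or verify the universal property directly using the description of morphisms as functional relations. Specialising to $\mathbf{P} = \mathbf{U}$ with $\sim_X = d_X$, $\sim_Y = d_Y$, and $\wedge = \max$, the product PER becomes $\max(d_X,d_Y) = d$, so the underlying object of the product in ${\rm PER}(\mathbf{U})$ is exactly $G((X,d_X)\times(Y,d_Y))$. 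The canonical first projection is then represented by $((x,y),x') \mapsto \max(d_X(x,x'),d_Y(y,y)) = d_X(x,x')$, using $d_Y(y,y)=0$ and $\max(a,0)=a$; this is literally the functional relation $((x,y),x') \mapsto d_X(\pi_X(x,y),x')$ representing $G(\pi_X)$, and similarly for the second projection. Hence $G$ carries the product cone of $\mathbf{cMet}_1$ to the product cone of ${\rm PER}(\mathbf{U})$.

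The main obstacle is nailing down the explicit form of products in ${\rm PER}(\mathbf{U})$ and confirming that its canonical projections coincide, as isomorphism classes of functional relations, with $G(\pi_X)$ and $G(\pi_Y)$; this is where the simplifications $\top = 0$ and $\max(a,0)=a$ are essential. As an alternative that sidesteps these computations, one could invoke the already-established equivalence $G : \mathbf{cMet}_1 \to {\rm ER}(\mathbf{U})$: equivalences preserve finite products, and since binary meets and the top element preserve reflexivity, finite products of equivalence relations are again equivalence relations, so the products computed in the full subcategory ${\rm ER}(\mathbf{U})$ coincide with those in ${\rm PER}(\mathbf{U})$.
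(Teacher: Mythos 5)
Your proof is correct and follows essentially the same route as the paper's: identify the binary product in ${\rm PER}(\mathbf{U})$ as the underlying set-theoretic product equipped with the meet $(x,y)\sim(x',y') := d_X(x,x') \wedge d_Y(y,y')$, observe that meet in $\mathbf{U}$ is $\max$ so this is exactly $G$ of the product metric, and check that the canonical projection $((x,y),x') \mapsto \max(d_X(x,x'),d_Y(y,y)) = d_X(x,x')$ coincides with $G(\pi_X)$; you are in fact slightly more explicit than the paper on the terminal object. Your closing alternative via the equivalence $G: \mathbf{cMet}_1 \to {\rm ER}(\mathbf{U})$ is also sound, given the observation that products of equivalence relations in ${\rm PER}(\mathbf{U})$ remain reflexive.
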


\begin{proof}
    $G$ preserves the terminal object since $\mathbf{U}(G(1))$ contains only one element and any nonempty set $X$ together with the top element  of $\mathbf{U}(X \times X)$ is a terminal object in ${\rm PER}(\mathbf{U})$.
    
    For products, let \begin{equation*}
    \begin{tikzcd}
    {(X,d_X)} & {(X \times Y,d)} \arrow[l, "\pi_X"'] \arrow[r, "\pi_Y"] & {(Y,d_Y)}
    \end{tikzcd}
    \end{equation*}
    be a product diagram in $\mathbf{pMet}_1$. The product of $G(X,d_X)$ and $G(Y,d_Y)$ in ${\rm PER}(\mathbf{U})$ is the object $(X \times Y,\approx)$ where $X \times Y$ is the set-theoretic product of $X$ and $Y$ and $\approx$ is the relation on $\mathbf{U}(X \times Y \times X \times Y)$ such that 
    $$(x,y) \approx (x',y') := d_X(x,x') \wedge d_Y(y,y')$$
   By definition, $d$ is the maximum of $d_X$ and $d_Y$, and we have shown that the meet in $\mathbf{U}$ is given by the maximum, so clearly $(X \times Y,\approx)$ is equal to $G((X,d_X) \times (Y,d_Y))$. If we denote the projection $$(X\times Y,\approx) \rightarrow G(X,d)$$ in ${\rm PER}(\mathbf{U})$ by $\Pi_X$ then the following sequent holds in $\mathbf{U}$.
    \begin{align*}
        \Pi_X((x,y),x') &{\ \dashv \hspace{1pt} \vdash \ } d_X(x,x') \wedge d_Y(y,y) \\
        &{\ \dashv \hspace{1pt} \vdash \ } d_X(x,x') \\
        &{\ \dashv \hspace{1pt} \vdash \ } G( \pi_X)((x,y),x')
    \end{align*}
    \end{proof}

\begin{prop}{embCMTinsubjPERU}
    The product-preserving embedding $G: {\mathbf{cMet}}_1 \to {\rm PER}(\mathbf{U})$ extends to an embedding of hyperdoctrines ${\mathbf{CMT}} \to ({\rm PER}(\mathbf{U}), {\rm Sub})$; this embedding preserves all the structure of a universally coherent hyperdoctrine.
\end{prop}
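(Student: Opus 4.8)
The plan is to exhibit the pseudonatural transformation that upgrades $G$ to a morphism of hyperdoctrines, and then to verify the defining conditions of \refdefi{morphisms of hyperdoctrines} and \refdefi{embedding}. Throughout I identify the subobject hyperdoctrine of ${\rm PER}(\mathbf{U})$ with ${\rm Strict}(\mathbf{U})$ via \refprop{Subobject correspondence in PER}, so that a predicate on $G(X,d)=(X,d)$ is a strict relation, i.e.\ an element $\phi\in\mathbf{U}(X)$ with $\max(\phi(x),d(x,x'))\sq\phi(x')$ in $\mathbf{U}(X\times X)$. The transformation $\eta$ is the ``identity on underlying functions'': I send a uniformly continuous $\alpha\colon(X,d)\to[0,1]$ to $\alpha$ itself. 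First I would check this is well defined, i.e.\ that every $\alpha\in{\mathbf{CMT}}(X,d)$ is a strict relation; this is immediate from uniform continuity of $\alpha$ (given $\varepsilon$, pick $\delta$ below both $\varepsilon$ and a modulus witnessing $d(x,x')\le\delta\Rightarrow|\alpha(x)-\alpha(x')|\le\varepsilon$). Since the order on strict relations is the restriction of $\sq$, the map $\eta_{(X,d)}$ both preserves and reflects the order on the nose; combined with $G$ being full and faithful (\refprop{G full and faithful}), this will give the embedding clause of \refdefi{embedding} once $(G,\eta)$ is a morphism.

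The computational engine for the rest is a single collapse lemma: for uniformly continuous $\beta$ and the representing relation $d_Y(f(x),y)$ of a map, one has $\inf_y\max(d_Y(f(x),y),\beta(y))\simeq\beta(f(x))$. One inequality is pointwise (substitute $y=f(x)$ to get $\inf_y\max(d_Y(f(x),y),\beta(y))\le\beta(f(x))$, whence $\beta\circ f\sq\inf_y\max(d_Y(f(x),y),\beta(y))$, since a pointwise inequality reverses under $\sq$); the reverse $\sq$ uses uniform continuity of $\beta$ exactly as in the well-definedness check. I would isolate this once and reuse it. Pseudonaturality of $\eta$ is then exactly this lemma: reindexing a strict relation $\phi$ along $Gf$ is $\exists y\,(d_Y(f(x),y)\wedge\phi(y))$ by \refdefi{Strict relation}, so for $\phi=\eta_Y(\beta)=\beta$ it equals $\inf_y\max(d_Y(f(x),y),\beta(y))\simeq\beta\circ f=\eta_X({\mathbf{CMT}}(f)(\beta))$. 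Preservation of $\top,\bot,\wedge,\vee$ is routine, since in both hyperdoctrines these are the constant $0$, a function with positive infimum, the pointwise $\max$, and the pointwise $\min$, computed identically (\refprop{Sub_UC definition}, \refprop{stricthyperdoctrine}).

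For the quantifiers I use that $G\pi$, for a product projection $\pi\colon(X,d_X)\times(Y,d_Y)\to(X,d_X)$, is the projection $\Pi_X$ represented by $d_X(x,x')$ (\refprop{Gpreserve}). By \refprop{stricthyperdoctrine} its left adjoint sends $\psi$ to $\inf_{(x,y)}\max(d_X(x,x'),\psi(x,y))$, and for $\psi=\alpha$ the collapse lemma (now in the first variable) yields $\simeq\inf_y\alpha(x',y)$, which is $\eta_X(\exists^{\mathbf{CMT}}_\pi\alpha)$ by \refprop{Sub_UC definition} and is genuinely uniformly continuous by \refprop{inf sup continuous}. For the universal quantifier I invoke \refprop{General: P has right adjoints}: its hypothesis $z\sim z\intiff\top$ holds because $G$ lands among equivalence relations, the fibre $(Y,d_Y)$ being a metric space; hence $\forall_{\Pi_X}$ exists in the target and equals $x\sim x\wedge\forall y\,\alpha(x,y)=\sup_y\alpha(x,y)=\eta_X(\forall^{\mathbf{CMT}}_\pi\alpha)$, using $x\sim x=d_X(x,x)=\top$. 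Finally, for equality I compute $Eq$ on both sides: in ${\mathbf{CMT}}$ the adjoint characterisation forces $Eq_{(X,d)}\simeq d$, while in ${\rm PER}(\mathbf{U})$ the equality subobject is the image of the diagonal, corresponding under \refprop{Subobject correspondence in PER} to the strict relation $(x_1,x_2)\mapsto\inf_x\max(d(x,x_1),d(x,x_2))$; the triangle inequality gives $\tfrac12 d(x_1,x_2)\le\inf_x\max(d(x,x_1),d(x,x_2))\le d(x_1,x_2)$, so this is $\simeq d$ as well, and the equality clause of \refdefi{morphisms of hyperdoctrines} reads $d\simeq d$.

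I expect the main obstacle to be bookkeeping the quantifiers through the strict-relation presentation rather than any deep difficulty: one must check that the left adjoint, the reindexing, and the product PER are the literal $\inf/\max$-formulas they appear to be, and---crucially---that the universal quantifier exists in the (merely coherent) target at all. The latter is the only place where extra input is needed, and it is supplied by \refprop{General: P has right adjoints} together with the fact that every object in the image of $G$ is an equivalence relation. The equality computation is the only other spot requiring a genuine (if short) estimate, namely the two-sided bound on $\inf_x\max(d(x,x_1),d(x,x_2))$.
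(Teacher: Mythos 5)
Your proof is correct and follows essentially the same route as the paper's: identify ${\rm Sub}({\rm PER}(\mathbf{U}))$ with ${\rm Strict}(\mathbf{U})$, take $\eta$ to be the identity on underlying functions (well defined by uniform continuity), and observe that the logical structure on the reflexive objects in the image of $G$ is computed by the same $\max/\min/\inf/\sup$ formulas as in $\mathbf{CMT}$. The paper's own proof is far terser and simply asserts this last point with back-references; your collapse lemma $\inf_y\max(d_Y(f(x),y),\beta(y))\simeq\beta(f(x))$ and the two-sided bound on $\inf_x\max(d(x,x_1),d(x,x_2))$ are exactly the details it leaves implicit, and both check out.
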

\begin{proof}
    In view of \refprop{Subobject correspondence in PER} it suffices to construct an embedding ${\mathbf{CMT}} \to {\rm Strict}(\mathbf{U})$. On objects, a metric space $(X,d)$ is sent to an element $(X, \sim)$ with $x \sim x' = d(x,x')$. An element $\phi \in {\mathbf{CMT}}(X,d)$ is a uniformly continuous function $\phi: (X,d) \to [0,1]$; such an element can be thought of an element $\phi \in \mathbf{U}(X)$ as well. It will be a strict relation on $(X, \sim)$, because of uniform continuity of $\phi$. We only have to verify that the logical structure (distributive lattice structure on the fibres as well as the adjoints) is preserved: however, on reflexive objects the logical structure  in ${\rm Strict}(\mathbf{U})$ is computed as in $\mathbf{U}$ (see the proofs of \refprop{stricthyperdoctrine} and \refprop{General: P has right adjoints}). Moreover, the logical structure of $\mathbf{CMT}$ and $\mathbf{U}$ is also computed in the same manner (see the proof of \refprop{CMThyperdoctrine}).
\end{proof}

\section{A topos for continuous logic}

In the previous section we have seen how the hyperdoctrine $\mathbf{CMT}$ can be embedded in the subobject hyperdoctrine of a coherent category. From this it follows fairly quickly, using some known facts about coherent topologies, that $\mathbf{CMT}$ can also be embedded into the subobject hyperdoctrine of a Grothendieck topos. We will recall these facts here and explain how these can be used to achieve our main objective. 

\subsection{Coherent embedding in sheaves}

Let $\mathcal{C}$ be a small coherent category. We can define the \textit{coherent Grothendieck topology} (also called the \textit{coherent topology}) $J$ on $\mathcal{C}$: If $X$ is an object in $\mathcal{C}$ we let the covering sieves $J(X)$ consist of all the finite sets of morphisms $(f_i: X_i \rightarrow X \ | \ i \in I)$ such that the union of the images of $f_i$ is $X$. 

It is known (see example A2.1.11 in \cite{johnstone_2002}) that for a small coherent category $\mathcal{C}$ the topology $J$ is \textit{subcanonical}, meaning that all representable presheaves on $\mathcal{C}$ are sheaves for $J$. In that case we can consider the Yoneda embedding as an embedding into the category ${\rm Sh}(\mathcal{C},J)$. This embedding preserves any universal quantification which exists, as made precise in the following lemma, which is a modified version of Lemma 3.1 in \cite{butz_johnstone_1998}. The obvious modification of their proof suffices to prove our case.

\begin{lemm}{small Heyting category: yoneda preserves universal quantification}{\rm  [Modification of Lemma 3.1 in \cite{butz_johnstone_1998}]}
    Let $\mathcal{C}$ be a small coherent category and let $J$ be a subcanonical Grothendieck topology on $\mathcal{C}$. Suppose that $f: A \rightarrow B$ is a morphism in $\mathcal{C}$ such that $Sub_{\mathcal{C}}(f)$ has a right adjoint $\forall_f$. Then the Yoneda embedding $y: \mathcal{C} \rightarrow {\rm Sh}(\mathcal{C},J)$ preserves this right adjoint.
\end{lemm}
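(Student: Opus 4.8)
The plan is to prove, for every subobject $S \in Sub_{\mathcal{C}}(A)$, that $y(\forall_f S) \simeq \forall_{yf}(yS)$ as subobjects of $yB$, where $\forall_{yf}$ denotes the right adjoint to pullback along $y(f)$ in $\mathrm{Sh}(\mathcal{C},J)$ (this exists because every topos is a Heyting category). Throughout I would use that $y$ is full, faithful and preserves finite limits — the latter since the representables are sheaves ($J$ subcanonical) and Yoneda preserves limits. Consequently $y$ preserves monomorphisms, pullbacks, top subobjects and the subobject order, and, being full and faithful, it reflects equalities of subobjects. The inclusion $y(\forall_f S) \le \forall_{yf}(yS)$ is the easy half: from the counit $f^*\forall_f S \le S$ in $\mathcal{C}$, applying $y$ and using preservation of the pullback defining $f^*$ gives $(yf)^*\,y(\forall_f S) \simeq y(f^*\forall_f S) \le yS$, and transposing across $(yf)^* \dashv \forall_{yf}$ yields the claim.

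For the reverse inclusion $\forall_{yf}(yS) \le y(\forall_f S)$, I would argue levelwise. Both sides are subobjects of the representable $yB$, so it suffices to check that every generalized element $c \colon C \to B$ (i.e. every $c \in (yB)(C)$) lying in $\forall_{yf}(yS)(C)$ also lies in $y(\forall_f S)(C)$, that is, that $c$ factors through $\forall_f S \hookrightarrow B$ in $\mathcal{C}$. I would form the pullback $P := C \times_B A$ in $\mathcal{C}$, with projections $p \colon P \to C$ and $\tilde c \colon P \to A$. Since $y$ preserves this pullback, Beck--Chevalley for $\forall$ in the topos gives $y(c)^*\,\forall_{yf}(yS) \simeq \forall_{yp}\big(y(\tilde c)^*\,yS\big) \simeq \forall_{yp}\big(y(\tilde c^* S)\big)$. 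The hypothesis that $c$ factors through $\forall_{yf}(yS)$ says the left-hand side equals $\top_{yC}$; since $\forall_{yp}(T) = \top_{yC}$ forces $T = \top_{yP}$, I obtain $y(\tilde c^* S) \simeq \top_{yP} \simeq y(\top_P)$, whence $\tilde c^* S \simeq \top_P$ because $y$ reflects equality of subobjects. In other words $\tilde c$ factors through $S$, so $\mathrm{im}(\tilde c) \le S$.

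It then remains to transport this to a statement about $c$ using only structure available in the coherent category $\mathcal{C}$. Here I would invoke the Beck--Chevalley condition for images, which holds in any coherent (indeed regular) category: applied to the same pullback square it gives $f^*\,\mathrm{im}(c) \simeq f^*\exists_c(\top_C) \simeq \exists_{\tilde c}\,p^*(\top_C) \simeq \mathrm{im}(\tilde c)$. Combined with $\mathrm{im}(\tilde c) \le S$ this yields $f^*\,\mathrm{im}(c) \le S$, and transposing across the adjunction $f^* \dashv \forall_f$ gives $\mathrm{im}(c) \le \forall_f S$, i.e. $c$ factors through $\forall_f S$, as required.

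The main obstacle --- and the point where care is needed --- is that $\mathcal{C}$ is assumed only coherent and to possess the single right adjoint $\forall_f$; we may \emph{not} assume that $\mathcal{C}$ is a Heyting category, that other universal quantifiers exist, or that $\forall$ satisfies Beck--Chevalley downstairs. The device that circumvents this is to perform every genuine manipulation of universal quantifiers \emph{upstairs} in the topos, where $\forall$ is total and Beck--Chevalley is automatic, and to route the downstairs reasoning entirely through images and the one adjunction $f^* \dashv \forall_f$; images commute with pullback in any coherent category, so this is precisely the structure we are entitled to use. I would also verify the innocuous but essential reductions: that $y$ reflects equality of subobjects (from full faithfulness) and that a subobject comparison over the representable $yB$ can indeed be tested on generalized elements out of representables.
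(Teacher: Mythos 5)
Your proof is correct and is essentially the argument the paper relies on: the paper gives no proof of its own, deferring to the ``obvious modification'' of Lemma 3.1 in Butz--Johnstone, and your write-up is precisely that modification spelled out (easy inclusion by transposing the counit, hard inclusion by testing on generalized elements, Beck--Chevalley for $\forall$ upstairs in the topos, and pullback-stability of images downstairs). In particular you correctly identify and handle the one point where the modification matters, namely that only the single adjoint $\forall_f$ is available in $\mathcal{C}$, so all other universal quantification must be carried out in the sheaf topos.
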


We can do the same for Corollary 3.2 in \cite{butz_johnstone_1998}: 

\begin{lemm}{coherent small category means yoneda embedding is coherent functor}{\rm [Modification of Corollary 3.2 in \cite{butz_johnstone_1998}]}
If $\mathcal{C}$ is a small coherent category and $J$ denotes the coherent topology on $\mathcal{C}$, then the Yoneda embedding $\mathcal{C} \rightarrow {\rm Sh}(\mathcal{C},J)$ is a coherent functor.
\end{lemm}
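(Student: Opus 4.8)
The plan is to verify directly the four conditions defining a coherent functor for the Yoneda embedding $y\colon \mathcal{C} \to {\rm Sh}(\mathcal{C},J)$: preservation of finite limits, of the initial (strict) object, of binary unions of subobjects, and of images. Since $J$ is subcanonical, $y$ genuinely factors through the subcategory of sheaves, so I would begin with finite limits. The reflective inclusion ${\rm Sh}(\mathcal{C},J) \hookrightarrow [\mathcal{C}^{op},{\mathbf{Sets}}]$ creates limits, and the ordinary Yoneda embedding into presheaves preserves all limits; hence $y$ preserves finite limits. In particular it preserves monomorphisms and pullbacks, and therefore induces, for each object $X$, a monotone map ${\rm Sub}_{\mathcal{C}}(X) \to {\rm Sub}_{{\rm Sh}(\mathcal{C},J)}(yX)$.

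The only place where the coherent topology genuinely enters is the observation that $y$ carries every $J$-covering family to an epimorphic family: if $(f_i\colon X_i \to X)_{i\in I}$ generates a covering sieve, then the induced arrow $\coprod_i y(X_i) \to y(X)$ is an epimorphism in ${\rm Sh}(\mathcal{C},J)$. I would prove this via the local-surjectivity criterion for epimorphisms of sheaves: the universal element $\mathrm{id}_X$ is covered by the $f_i$, and each $f_i$ lifts to $y(X_i)$ along the coproduct inclusion, so the map is locally surjective and hence epic. This is exactly the standard behaviour of Yoneda for a subcanonical coverage, and it is the heart of the argument.

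Granting this, the remaining clauses follow formally. For images, factor $f\colon A \to B$ coherently as $A \twoheadrightarrow I \hookrightarrow B$; then $y(A)\to y(I)$ is epic (the cover $A\twoheadrightarrow I$ generates a covering sieve on $I$) while $y(I)\hookrightarrow y(B)$ is monic, so by uniqueness of epi--mono factorizations in the topos $y(I)$ is the image of $y(f)$, making $y$ regular. For binary unions, the pair $\{A\hookrightarrow A\cup B,\ B\hookrightarrow A\cup B\}$ is a coherent cover of $A\cup B$, so $y(A)$ and $y(B)$ are jointly epic into $y(A\cup B)$; since both are subobjects of $y(X)$ lying below $y(A\cup B)$, this forces $y(A)\vee y(B) = y(A\cup B)$ in ${\rm Sub}(yX)$. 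For the initial object, strictness of $0$ in a coherent category means $0$ has no proper subobjects, so the empty family is a coherent cover of $0$; thus the empty sieve is $J$-covering on $0$, which pins down $y(0)$ as the initial sheaf (equivalently, $\mathcal{C}(X,0)=\emptyset$ for $X\not\cong 0$ by strictness, matching the sheafification of the empty presheaf).

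The main obstacle is isolating and proving cleanly the single fact that $J$-covers become epimorphic families; everything after that is a formal consequence of preservation of finite limits together with the uniqueness and stability of epi--mono factorizations in the Grothendieck topos. This is precisely the obvious modification of the proof of Corollary 3.2 in \cite{butz_johnstone_1998}, carried out with the coherent structure (images, unions, the strict initial object) in place of the Heyting structure treated in the preceding lemma.
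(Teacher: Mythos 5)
Your proof is correct and is exactly the ``obvious modification'' of Butz--Johnstone's argument that the paper invokes without spelling out: preservation of finite limits via subcanonicity and the reflective inclusion into presheaves, plus the single key fact that $J$-covering families become epimorphic families of sheaves, from which preservation of images, binary unions, and the strict initial object all follow formally. No gaps; the paper itself gives no more detail than a citation, so your write-up supplies the intended argument.
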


 \subsection{Embedding into a topos of sheaves} \label{section: topos embedding}

We cannot use \reflemm{coherent small category means yoneda embedding is coherent functor} and Lemma \reflemm{small Heyting category: yoneda preserves universal quantification} directly to show an embedding of the subobject hyperdoctrine of ${\rm PER}(\mathbf{U})$ into that of a sheaf topos, since ${\rm PER}(\mathbf{U})$ is clearly not small. However, using Grothendieck universes we can obtain ``small versions'' of the constructions of the previous chapters. 

Indeed, let us fix some Grothendieck universe containing the unit interval $[0,1]$ and refer to the sets belonging to this universe as \emph{tiny}. Then let ${\mathbf{cMet}}^t_1$ be the category of those complete metric spaces $(X, d)$ of diameter 1 where $X$ is tiny; let ${\mathbf{CMT}}_t$ be the hyperdoctrine obtained from ${\mathbf{CMT}}$ by only including those metric spaces in the base that belong to ${\mathbf{cMet}}^t_1$; and let $\mathbf{U}^t$ be the hyperdoctrine obtained from $\mathbf{U}$ by only including the tiny sets in the base. By tracing through the proof of 
\refprop{embCMTinsubjPERU} it is clear that the same arguments show the following result as well:

\begin{prop}{embCMTinsubjPERUagain} 
    The embedding $G: {\mathbf{cMet}}^t_1 \to {\rm PER}(\mathbf{U}^t)$ extends to an embedding of hyperdoctrines ${\mathbf{CMT}}^t \to ({\rm PER}(\mathbf{U}^t), {\rm Sub})$; this embedding preserves all the structure of a universally coherent hyperdoctrine.
\end{prop}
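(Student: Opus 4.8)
The plan is to re-run the proof of \refprop{embCMTinsubjPERU} essentially verbatim, checking at each step that the restriction to tiny sets costs us nothing. The first thing I would do is confirm that $\mathbf{U}^t$ is genuinely a universally coherent hyperdoctrine, now with base the category of tiny sets. This amounts to observing that none of the constructions in the proof of \refprop{Sub_UC definition} leaves the fixed universe: if $X$ is tiny then so is $X \times X$, and since $[0,1]$ lies in the universe the collection of functions $X \to [0,1]$ is again tiny, so each fibre $\mathbf{U}^t(X)$ is a well-defined preorder living in the universe. The adjoints $\exists_f$ and $\forall_f$ are computed as infima and suprema over fibres $f^{-1}(x) \subseteq Y$, which are subsets of a tiny set and hence tiny, with values again in $[0,1]$; thus $\exists_f$ and $\forall_f$ land in $\mathbf{U}^t$. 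The Beck--Chevalley and Frobenius conditions are verified by purely equational manipulations that are insensitive to the ambient universe, so they transfer unchanged.

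Next I would invoke \refprop{Subobject correspondence in PER} for $\mathbf{U}^t$, so that ${\rm PER}(\mathbf{U}^t)$ is coherent and its subobject hyperdoctrine is isomorphic to ${\rm Strict}(\mathbf{U}^t)$; as in the proof of \refprop{embCMTinsubjPERU} it then suffices to produce an embedding ${\mathbf{CMT}}^t \to {\rm Strict}(\mathbf{U}^t)$. The definition of $G$ on objects and predicates is copied word for word: a tiny complete metric space $(X,d)$ is sent to $(X, \sim)$ with $x \sim x' = d(x,x')$, and a uniformly continuous $\phi \colon (X,d) \to [0,1]$ is reinterpreted as an element of $\mathbf{U}^t(X)$, which is strict precisely because it is uniformly continuous.

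The preservation of logical structure is then the same argument as before. On a reflexive object the lattice operations and adjoints of ${\rm Strict}(\mathbf{U}^t)$ are computed exactly as in $\mathbf{U}^t$ (by the proofs of \refprop{stricthyperdoctrine} and \refprop{General: P has right adjoints}), while the lattice operations and adjoints of ${\mathbf{CMT}}^t$ agree with those of $\mathbf{U}^t$ (by the proof of \refprop{CMThyperdoctrine}). Since all of these are the fibrewise $\max$, $\min$, $\inf$ and $\sup$, and none of these depends on the ambient universe, the embedding preserves all the structure of a universally coherent hyperdoctrine.

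The only point that requires genuine attention---though it is hardly an obstacle---is the closure of the chosen Grothendieck universe under the operations actually used: binary products, exponentials into $[0,1]$, and the formation of subsets and images. I expect to dispatch this by a short appeal to the standard closure properties of Grothendieck universes, which guarantee exactly that tininess propagates through every construction above. With that observed, each line of the proof of \refprop{embCMTinsubjPERU} holds with the word ``tiny'' inserted in front of each carrier set, yielding the claimed embedding.
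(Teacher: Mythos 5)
Your proposal is correct and matches the paper's approach exactly: the paper simply observes that the proof of \refprop{embCMTinsubjPERU} carries over verbatim once one notes that all constructions (products, function spaces into $[0,1]$, fibrewise infima and suprema) stay within the fixed Grothendieck universe. Your more detailed verification of the universe's closure properties is a sound elaboration of the same argument rather than a different route.
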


\begin{theo}{CMT embeds into a topos} The hyperdoctrine
${\mathbf{CMT}}^t$ embeds into the subobject hyperdoctrine of a Grothendieck topos.
\end{theo}
\begin{proof}
This follows from \reflemm{small Heyting category: yoneda preserves universal quantification}, \reflemm{coherent small category means yoneda embedding is coherent functor} and \refprop{embCMTinsubjPERUagain}.
\end{proof}

\section{Questions and directions for future research}\label{section: conclusion}

We have introduced a hyperdoctrine $\mathbf{CMT}$ for continuous model theory and embedded it in the subobject hyperdoctrine of a suitable Grothendieck topos. In this way we have opened up a new way of thinking about continuous model theory, which should be explored further. Let us mention some possible directions.

Any Grothendieck topos has a rich logical structure: not only does it have an implication which $\mathbf{CMT}$ lacks, but it is a model of full higher-order intuitionistic logic. It is tempting to think that this can be exploited for the purposes of continuous model theory, but at present it is not entirely clear to us how. A good starting point may be to use Proposition 7.14 from \cite{benyaacovetal08} (cited above) to link the interpretation of formulas in the topos with their interpretation in $\omega$-saturated models in continuous model theory.

In general, the topos deserves a much closer study. For instance, is there a simple description of its subobject classifier? Another natural question in this regard is whether the topos satisfies any continuity principles, as it shares many features which a topos defined on a site of topological spaces (see Section VI.9 of \cite{maclanemoerdijk94}). 

Also, we have two objects in the the topos which can play the role of the unit interval. On the one hand, we can construct the unit interval internally, by using some standard construction (as Dedekind cuts, say). On the other hand, ${\mathbf{cMet}}_1$ embeds into the topos, so we also have the sheaf which is the image of the unit interval under this embedding. Are they the same? 

In this connection, it would also be interesting to explore variations on $\mathbf{U}$, suggested by strengthenings of uniform continuity. We mention two natural candidates: for two maps $\alpha, \beta: X \to [0,1]$, we could also order them by saying:
\[ \alpha \sq \beta \Leftrightarrow (\exists K > 0) \, (\forall x \in X) \, \beta(x) \leq K \alpha(x), \]
as in the definition of Lipschitz continuity. Presumably, one could again study the category of equivalence relations in this hyperdoctrine and embed it in a Grothendieck topos. The category of complete metric spaces and Lipschitz continuous maps embeds into the category of equivalence relations for this hyperdoctrine; perhaps it is again an equivalence? One could also restrict the hyperdoctrine even further and demand that $K = 1$; this should be related to the category of ultrametric spaces and 1-Lipschitz functions. 

\printbibliography

\appendix

\end{document}